\renewcommand*\env@matrix[1][*\c@MaxMatrixCols c]{%
  \hskip -\arraycolsep
  \let\@ifnextchar\new@ifnextchar
  \array{#1}}
\newcommand{\iyng}[1]{{\tiny\Yvcentermath1 \yng(#1)}}
\newcommand{\iyoung}[1]{{\scriptsize\Yvcentermath1 \young(#1)}}
\tikzset{
	ch/.style={circle,draw,on chain,inner sep=2pt},
	chj/.style={ch,join},
	every path/.style={shorten >=4pt,shorten <=4pt}
	}
\newcommand{\dnode}[2][chj]{%
	\node[#1,label={below:#2}] (#1) {};}
\newcommand{\dnodenj}[1]{%
	\dnode[ch]{#1}}
\newcommand{\dydots}{%
	\node[chj,draw=none,inner sep=1pt] {\dots};}
\numberwithin{equation}{subsection}
\newcommand{\bS}{\mbf S}
\newcommand{\bU}{\mbf U}
\newcommand{\bbH}{\mathbb{H}}
\newcommand{\bbS}{\mathbb{S}}
\newcommand{\A}{\mrm{A}}
\newcommand{\B}{\mrm{B}}
\newcommand{\R}{\mrm{R}}
\newcommand{\BA}[1]{    \begin{align}  #1  \end{align}}
\newcommand{\Ba}[1]{    \begin{array}   #1   \end{array}}
\newcommand{\Bc}[1]{     \begin{cases}   #1   \end{cases}}
\newcommand{\BM}[1]{    \begin{bmatrix}    #1    \end{bmatrix}}
\newcommand{\cC}{\mathcal{C}}
\newcommand{\cF}{\mathcal F}
\newcommand{\cH}{\mathcal{H}}
\newcommand{\cO}{\mathcal{O}}
\newcommand{\cT}{\mathcal{T}}
\newcommand{\CC}{\mathbb{C}}
\newcommand{\crl}{\curvearrowleft}
\newcommand{\crr}{\curvearrowright}
\newcommand{\ds}{\displaystyle}
\newcommand{\End}{\mrm{End}}
\newcommand{\Ext}{\mrm{Ext}}
\newcommand{\fgl}{\mathfrak{gl}}
\newcommand{\fs}{\mathfrak{s}}
\newcommand{\fsone}{\mathfrak{s}^{(1)}}
\newcommand{\fstwo}{\mathfrak{s}^{(2)}}
\newcommand{\fsl}{\mathfrak{sl}}
\newcommand{\ft}{\mathfrak{t}}
\newcommand{\ftone}{\mathfrak{t}^{(1)}}
\newcommand{\fttwo}{\mathfrak{t}^{(2)}}
\newcommand{\fu}{\mathfrak{u}}
\newcommand{\fuone}{\mathfrak{u}^{(1)}}
\newcommand{\futwo}{\mathfrak{u}^{(2)}}
\newcommand{\geo}{\mrm{geo}}
\newcommand{\alg}{\mrm{alg}}
\newcommand{\GL}{\mrm{GL}}
\newcommand{\Hom}{\mrm{Hom}}
\newcommand{\Irr}{\mrm{Irr}}
\newcommand{\lone}{\lambda^{(1)}}
\newcommand{\ltwo}{\lambda^{(2)}}
\newcommand{\inv}{^{-1}}
\newcommand{\mbf}{\mathbf}
\newcommand{\mrm}{\mathrm}
\newcommand{\Mod}{\mrm{Mod}}
\newcommand{\Mor}{\underset{\mrm{Mor}}{\simeq}}
\newcommand{\ind}{\mrm{ind}}
\newcommand{\MA}{M^\A_q}
\newcommand{\MB}{M^\B_{Q,q}}
\newcommand{\NN}{\mathbb{N}}
\newcommand{\nhc}{\lceil {\textstyle\frac{n}{2}}\rceil} 
\newcommand{\nhf}{\lfloor {\textstyle\frac{n}{2}}\rfloor} 
\newcommand{\otw}{\textup{otherwise}}
\newcommand{\QQ}{\mathbb{Q}}
\newcommand{\tif}{\textup{if }}
\newcommand{\tr}{\vartriangleright}
\newcommand{\ZZ}{\mathbb{Z}}
\newcommand{\gt}{>}
\newcommand{\lt}{<}
\renewcommand{\=}[1]{\overline{#1}}
\theoremstyle{definition}
\newtheorem{Def}{Definition}[subsection]
\theoremstyle{plain}
\newtheorem{prop}[Def]{Proposition}
\newtheorem{thm}[Def]{Theorem}
\newtheorem{lemma}[Def]{Lemma}
\newtheorem{lem}[Def]{Lemma}
\newtheorem{cor}[Def]{Corollary}
\newtheorem{conj}[Def]{Conjecture}
\begin{document}

\title{On $q$-Schur algebras corresponding to Hecke algebras of type B}

\begin{abstract} In this paper the authors investigate the $q$-Schur algebras of type B that were  
constructed earlier using coideal subalgebras for the quantum group of type A. The authors 
present a coordinate algebra type construction that allows us to realize these $q$-Schur algebras 
as the duals of the $d$th graded components of certain graded coalgebras. Under suitable conditions 
an isomorphism theorem is proved that demonstrates that the representation theory reduces to the $q$-Schur algebra of 
type A. This enables the authors to address the questions of cellularity, quasi-hereditariness and representation type 
of these algebras. Later it is shown that these algebras realize the 1-faithful quasi hereditary covers of the Hecke algebras of type B. 
As a further consequence, the authors demonstrate that these algebras are Morita equivalent to Rouquier's finite-dimensional algebras that arise from the category ${\mathcal O}$ for rational Cherednik algebras for the Weyl group of type B. In particular, we have introduced a Schur-type functor that identifies the type B Knizhnik-Zamolodchikov functor.
\end{abstract}

\author{\sc Chun-Ju Lai}
\address{Department of Mathematics\\ University of Georgia \\
Athens\\ GA~30602, USA}
\email{cjlai@uga.edu}

\author{\sc Daniel K. Nakano}
\address{Department of Mathematics\\ University of Georgia \\
Athens\\ GA~30602, USA}
\thanks{Research of the second author was supported in part by
NSF grant DMS-1701768}
\email{nakano@math.uga.edu}

\author{\sc Ziqing Xiang}
\address{Department of Mathematics\\ University of Georgia \\
Athens\\ GA~30602, USA}
\thanks{The third author gratefully acknowledge funding support from the RTG in Algebraic Geometry, Algebra, and Number Theory at the University of Georgia, and from the NSF RTG grant DMS-1344994}
\email{ziqing@uga.edu}

\keywords{}
\subjclass{}

\maketitle


\section{Introduction}
\subsection{}
Schur-Weyl duality has played a prominent role in the representation theory of groups and algebras. The duality first appeared as method to 
connect the representation theory of the general linear group $\GL_{n}$ and the symmetric group $\Sigma_{d}$. This duality carries over 
naturally to the quantum setting by connecting the representation theory of quantum $\GL_{n}$ and the Hecke algebra $\cH_q(\Sigma_d)$ of the 
symmetric group $\Sigma_{d}$. 

Let $U_q(\fgl_n)$ be the Drinfeld-Jimbo quantum group. Jimbo showed in \cite{Jim86} that there is a Schur duality between 
$U_q(\fgl_n)$ and $\cH_q(\Sigma_d)$ on the $d$-fold tensor space of the natural representation $V$ of $U_q(\fgl_n)$. The $q$-Schur algebra of type A, 
$S_q^\A(n,d)$, is the centralizer algebra of the $\cH_q(\Sigma_d)$-action on $V^{\otimes d}$. 

It is well-known that the representation theory for $U_q(\fgl_n)$ is closely related to the representation theory for the quantum linear group $\GL_n$. The polynomial 
representations $\GL_n$ coincide with modules of $S_q^\A(n,d)$ with $d > 0$.  The relationship between objects are depicted as below:
\[
\Ba{{cccccccc}
K[M^\A_q(n)]^*&\hookleftarrow& 
U_q(\fgl_n)
\\
\downarrow&&
\downarrow
\\
K[M^\A_q(n)]_d^* &\simeq& 
S_q^\A(n,d) &\crr& V^{\otimes d}& \crl& \cH_q(\Sigma_d)
}
\]
The algebra $U_q(\fgl_n)$ embeds in the dual of the quantum coordinate algebra $K[M_q^\A]$; 
while $S_q^\A(n,d)$ can be realized as its $d$-th degree component. The reader is referred to \cite{PW91} for a thorough treatment of the subject. 

The Schur algebra $S_q^\A(n,d)$ and the Hecke algebra $\cH_q(\Sigma_d)$ are structurally related when $n\geq d$. 
\begin{itemize}
\item There exists an idempotent $e \in S_q^\A(n,d)$  such that $eS_q^\A(n,d)e \simeq \cH_q(\Sigma_d)$;
\item
An  idempotent yields the existence of Schur functor $\Mod(S_q^\A(n,d))\to \Mod(\cH_q(\Sigma_d))$;
\item 
$S_q^\A(n,d)$ is a (1-faithful) quasi-hereditary cover of $\cH_q(\Sigma_d)$\footnote{The algebra $S_q^\A(n,d)$ is 1-faithful under the conditions that $q$ is 
not a root of unity or if $q^2$ is a primitive $\ell$th root of unity then $\ell \geq 4$.}
\end{itemize}

\subsection{} Our paper aims to investigate the representation theory of the $q$-Schur algebras of type B that arises  
from the coideal subalgebras for the quantum group of type $\A$. 

We construct, for type $\B = \mrm{C}$, the following objects in the sense that all favorable properties mentioned in the previous section hold: 
\[
\Ba{{cccccccc}
K[M^\B_{Q,q}(n)]^* &\hookleftarrow& U_{Q,q}^\B(n)
\\
\downarrow&&\downarrow
\\
K[M^\B_{Q,q}(n)]_d^* &\simeq&S_{Q,q}^\B(n,d) &\crr& V_\B^{\otimes d}& \crl& \cH_{Q,q}^\B(d)
}
\]
For our purposes it will be advantageous to work in more general setting with two parameters $q$ and $Q$, and construct the analogs $K[M^\B_{Q,q}(n)]$ of the quantum coordinate algebras. 
Then we prove that the $d$th degree component of $K[M^\B_{Q,q}(n)]^*$ is isomorphic to the type B $q$-Schur algebras.
The coordinate approach provide tools to study the representation theory for the algebra $K[M^\B_{Q,q}(n)]^*$ and for the $q$-Schur algebras simultaneously. The algebra $U_{Q,q}^\B(n)$, unlike $U_q(\fgl_n)$, does not have an obvious comultiplication.
Therefore, its dual object, $K[M^\B_{Q,q}(n)]$, should be constructed as a coalgebra; while in the earlier situation $K[M^\A_q(n)]$ is a bialgebra. Our approach here differs from 
prior approaches to the subject for type $B$-Hecke algebra in that we can employ the action on the tensor space to realize the $q$-Schur algebra as a subcoalgebra (rather than simply defining the 
algebra as an endomorphism algebra on a direct sum of permutation modules).

In the second part of the paper an isomorphism theorem between the $q$-Schur algebras of type B and type A (under an invertibility condition) is 
established: 
\begin{equation}
S_{Q,q}^\B(n, d) \cong 
\Bc{
\ds\bigoplus_{i=0}^d S_q^\A(r, i) \otimes S_q^\A(r, d - i)
&\tif n=2r;
\\
\ds\bigoplus_{i=0}^d S_q^\A(r+1, i) \otimes S_q^\A(r, d - i)
&\tif n = 2r+1.
} 
\end{equation} 
One can view this as a ``lifting'' of the Morita equivalence (via the Schur functor) 
\begin{equation} \label{eq:HB}
\cH_{Q,q}^\B(d) \Mor  \prod_{i=0}^{d} \cH_q(\Sigma_i) \otimes \cH_q(\Sigma_{d-i}),
\end{equation} 
between Hecke algebras proved by Dipper-James \cite{DJ92}.
There are many cases when the Morita equivalence will hold, in particular when (i) $q$ is generic, (ii) $q$ is an odd root of unity, or (iii) $q$ is an (even) $\ell$th root of unity if $\ell > 4d$.

As a corollary of our isomorphism theorem, we obtain favorable properties for our coideal Schur algebras, see Section~\ref{sec:cell}--\ref{sec:qhcover}.  In particular, 
with the Morita equivalance we are able to show that $S_{Q,q}^\B(n, d)$ is a cellular algebra and quasi-hereditary. Moreover, in Section~\ref{sec:Rep}, we are able 
give a complete classification of the representation type of $S_{Q,q}^\B(n, d)$. 
In the following section (Section~\ref{sec:qhcover}), we are able to demonstrate that under 
suitable conditions, the Schur algebra $S_{Q,q}^\B(n, d)$ gives a concrete realization of quasi-hereditary one-cover for $\cH_{Q,q}^\B(d)$. 
The problem of concretely realizing these 
one-covers is in general an open problem for arbitrary Hecke algebras. 
Our results also exhibit how the representation theory of $S_{Q,q}^\B(n, d)$ 
is related to Rouquier's Schur-type algebras that arise from the category ${\mathcal O}$ for rational Cherednik algebras.  
In particular, we have introduced a Schur-type functor $F^\flat_{n,d}: \Mod(S^\B_{Q,q}(n,d)) \to \Mod(\cH^\B_{Q,q}(d))$ which is roughly defined by hitting an idempotent, and then identifies $F^\flat_{n,d}$ with the type B Knizhnik-Zamolodchikov functor, which is defined via monodromy.

\subsection{}
In the one-parameter case (i.e., $q=Q$), the algebra $U_q^\B(n)$ is the coideal subalgebra $\bU^\imath$ or $\bU^\jmath$ of $U_q(\fgl_n)$ in \cite{BW13} as a part of a double centralizer property (see also \cite{ES18} for a skew Howe duality viewpoint). 
The corresponding Schur algebras therein are denoted by $\bS^\imath$ or $\bS^\jmath$ to emphasize the fact that they 
arise from certain quantum symmetric pairs of type A III/IV associated with involutions $\imath$ or $\jmath$ on a Dynkin diagram of type $A_n$.
Namely, we have the identification below:
\[
U_q^\B(n) \equiv \Bc{
\bU^\jmath_r&\tif n = 2r+1;
\\
\bU^\imath_r&\tif n = 2r,
}
\quad
S_q^\B(n,d) \equiv \Bc{
\bS^\jmath(r,d)&\tif n = 2r+1;
\\
\bS^\imath(r,d)&\tif n = 2r.
}
\]
Note that the algebras $\bS^\jmath$ and its Schur duality are introduced first by Green in \cite{Gr97}. In \cite{BKLW} it is also developed a canonical basis theory for both Schur and coideal algebras.
For two parameters, a Schur duality for $U_{Q,q}^\B(n)$ is established in \cite{BWW18}; while the canonical basis theory can be found in \cite{LL18}.

To our knowledge, there is no general theory for finite-dimensional representations for the coideal subalgebras (see \cite{W17} for a classification for type AIII; also see \cite{Le17} for establishing their Cartan subalgebras for arbitrary type), and in some way our 
paper aims to establish results about  ``polynomial'' representations for $U_q^\B(n)$. 

There are other generalizations of the $q$-Schur duality for type B in literature. 
A comparison of the algebras regarding the aforementioned favorable properties will be given in Section~\ref{sec:SB}.
Since all these algebras are the centralizing partners of certain Hecke algebra actions, they are different from the ones appearing in the Schur duality (see \cite{H11}) for type B/C quantum groups, 
and are different from the coordinate algebras studied by Doty \cite{Do98}.

\noindent {\bf Acknowledgements.}
We thank Huanchen Bao, Valentin Buciumas, Hankyung Ko, Andrew Mathas, Stefan Kolb, Leonard Scott, Weiqiang Wang and Jieru Zhu for useful discussions.
The first author thanks the Academia Sinica for the support and hospitality during the completion of this project.
\section{Quantum coordinate (co)algebras}\label{sec:QCA}
\subsection{Quantum matrix spaces}
Let $K$ be a field  containing elements $q, Q$.
Denote the (quantum) commutators by
\eq
[A, B]_{x} = AB - xBA
\quad
(x \in K),
\quad
[A,B] = [A,B]_1.
\endeq

We define the (type A) quantum matrix spaces following \cite[\S3.5]{PW91} but with a shift on the index set as below:
\eq\label{eq:I}
I(n) = \Bc{
[-r, r]\cap\ZZ &\tif n = 2r+1;
\\
[-r, r]\cap\ZZ - \{0\} &\tif n = 2r.
}
\endeq
Let $\MA= M^\A_q(n)$ be the quantum analog of the space of $n\times n$ matrices indexed by $I(n)$, 
and let $K[\MA] = K[x_{ij}; i,j \in I(n)]/J^\A_q(n)$ be the associative $K$-algebra where
 $J^\A_q(n)$ is  the two-sided ideal of $ K[x_{ij}]$ generated by
\BA{
&{}[x_{ki}, x_{kj}]_{q\inv},
\quad &i>j
\\
&{}[x_{ki}, x_{li}]_{q\inv},
\quad &k>l
\\
&{}[x_{ki}, x_{lj}],
\quad &k>l, i<j
\\
&{}[x_{ki}, x_{lj}] - (q\inv - q) x_{li} x_{kj}.
\quad &k>l, i>j
}
The comultiplication on $K[\MA]$ is given by
\eq
\Delta:K[\MA] \to K[\MA] \otimes K[\MA],
\quad
x_{ij} \mapsto \sum_{k \in I(n)} x_{ik}\otimes x_{kj}.
\endeq
Let $V = V(n)$ be the $n$-dimensional vector space over $K$ with basis $\{v_i ~|~ i \in I(n)\}$. As a comodule  $V$ has a structure map
\eq
\tau_\A: V \to V \otimes K[\MA],
\quad
v_i \mapsto \sum_j v_j \otimes x_{ji}.
\endeq
For $\mu = (\mu_1, \ldots, \mu_d) \in I(n)^d$, set 
\eq 
v_\mu = v_{\mu_1} \otimes \ldots \otimes v_{\mu_d} \in V^{\otimes d}.
\endeq
It is easy to see that the set $\{ v_\mu ~|~ \mu \in I(n)^d\}$ forms a $K$-basis of the tensor space $V^{\otimes d}$. The structure map $\tau_\A$ induces a structure map
\eq
\tau^{\otimes d}_\A:
V^{\otimes d} \to V^{\otimes d} \otimes K[\MA],
\quad
v_\mu
\mapsto 
\sum_{\nu \in I(n)^d} v_\nu \otimes x_{\nu_1\mu_1} \ldots x_{\nu_d\mu_d}.
\endeq
In other words, the tensor space $V^{\otimes d}$ admits a $K[\MA]^*$-action defined by
\eq
K[\MA]^* \times V^{\otimes d} \to V^{\otimes d},
\quad
(f, v_\mu) 
\mapsto 
\sum_{\nu \in I(n)^d} f(x_{\nu_1\mu_1} \ldots x_{\nu_d\mu_d})  v_\nu  .
\endeq
\subsection{Hecke algebras of type B}
Let $\cH^\B = \cH^\B_{Q,q}(d)$ be the two-parameter Hecke algebra of type $\B$ over $K$ generated by $T_0, T_1, \ldots, T_{d-1}$ subject to the following relations:
\BA{
&
T_iT_{i+1}T_i=T_{i+1}T_iT_{i+1},
&
1\leq i \leq d-2,
\\
&
(T_0T_1)^2 = (T_1 T_0)^2,
\quad
T_iT_j=T_jT_i,
&
|i-j|>1,
\\
&
T_0^2 = (Q\inv -Q) T_0 + 1,
\quad
T_i^2 = (q\inv -q) T_i + 1,
&
1\leq i \leq d-1.
}
That is, the corresponding Coxeter diagram is given as:
\[
\begin{tikzpicture}[start chain]
\dnode{$0$}
\dnodenj{$1$}
\dydots
\dnode{$d-1$}
\path (chain-1) -- node[anchor=mid] {\(=\joinrel=\joinrel=\)} (chain-2);
\end{tikzpicture}
\]

Let $W^\B(d)$ be the Weyl group of type B generated by $S = \{s_0, \ldots, s_{d-1}\}$. It is known that $\cH^\B_{Q,q}(d)$ has a $K$-basis $\{T_w ~|~ w \in W^\B(d)\}$, where $T_w = T_{i_1} \ldots T_{i_N}$ for any reduced expression $w = s_{i_1} \ldots s_{i_N}$.
The subalgebra of $\cH^\B_{Q,q}(d)$ generated by $T_1, T_2, \ldots, T_{d-1}$ is isomorphic to the Hecke algebra $\cH^\A_q(\Sigma_d)$ of the symmetric group $\Sigma_d$.
Let $\cH^\B_q(d)$ be the specialization of $\cH^\B_{Q,q}(d)$ at $Q=q$. 
\subsection{Type B Schur duality}\label{sec:SdB}
It is well-known that $V^{\otimes d}$ admits an $\cH^\B_{Q,q}(d)$-action (and hence an $\cH_q^\A(\Sigma_d)$-action) defined as follows. 
For $\mu = (\mu_i)_i \in I(n)^d$, $0\leq t \leq d-1$, let
\eq
\mu \cdot s_t = \Bc{
(\mu_1, \ldots, \mu_{t-1}, \mu_{t+1}, \mu_t, \mu_{t+2}, \ldots, \mu_d) 
&\tif t \neq 0;
\\
(-\mu_1, \mu_2, \ldots, \mu_d)
&\tif t = 0.
}
\endeq
For $1 \leq t \leq d-1$, the right $\cH^\B_{Q,q}(d)$-action on $V^{\otimes d}$ is defined:
\eq\label{eq:vTi}
v_\mu T_t = \Bc{
v_{\mu \cdot s_t}&\tif \mu_t< \mu_{t+1};
\\
q\inv v_{\mu \cdot s_t} &\tif \mu_t= \mu_{t+1};
\\
v_{\mu \cdot s_t} + (q\inv-q)v_\mu&\tif \mu_t > \mu_{t+1},
}
\quad
v_\mu T_0 = \Bc{
v_{\mu \cdot s_0}&\tif 0<\mu_{1};
\\
Q\inv v_{\mu \cdot s_0} &\tif 0= \mu_{1};
\\
v_{\mu \cdot s_0} + (Q\inv-Q) v_\mu &\tif 0 >\mu_{1}.
}
\endeq

The {\em $q$-Schur algebras} of type $\A$ (and $\B$, resp.) are denoted by
\eq\label{def:Sq}
S^\A = S^\A_q(n,d) = \End_{\cH^\A_q(d)}(V^{\otimes d}),
\quad
S^\B = S^\B_{Q,q}(n,d) = \End_{\cH^\B_{Q,q}(d)}(V^{\otimes d}).
\endeq
We denote by $S_q^\B(n,d)$ the specialization of $S^\B_{Q,q}(n,d)$ at $Q=q$. 
It is known that $S_q^\B(n,d)$ admits a geometric realization (cf. \cite{BKLW}) as well as a Schur duality, which is compatible with the type A duality as follows:
\[
\Ba{{ccccccccc}
K[\MA(n)]^*&\twoheadrightarrow&K[\MA(n)]_d^*&\simeq&S_q^\A(n,d) &\crr& & \crl& \cH^\A(d)
\\
&&&&\cup&&V^{\otimes d}&&\cap
\\
&&&&S_q^\B(n,d)&\crr&&\crl&\cH_q^\B(d)
}
\]

\subsection{A coordinate coalgebra approach}

In this section, we aim to construct a coideal $J^\B_{Q, q}(n, d)$ of the coordinate bialgebra $K[M^\A_q(n)]_d$ such that $S^\B_{Q,q}(n,d)$ can be realized as the dual of the coordinate coalgebra 
\eq\label{eq:KBnd}
K[M^\B_{Q, q}(n)]_d = K[M^\A_q(n)]_d / J^\B_{Q, q}(n, d).
\endeq
\rmk
When it comes to comparing $S^\B_{Q,q}(n,d)$ with variants of $q$-Schur algebras type B (see Section~\ref{sec:SB}), we call $S^\B_{Q,q}(n,d)$ a {\it coideal }$q$-Schur algebra due to this nature.
\endrmk
For any $K$-subspace $J$ of $K[M^\A_q(n)]_d$, the $K[M^\A_{q}]_d$-comodule $V^{\otimes d}$ admits a $K[M^\A_{q}]_d/J$-comodule structure with structure map
\eq
\tau^{\otimes d}_J:
V^{\otimes d} \to V^{\otimes d} \otimes K[M^\A_{q}]_d/J,
\quad
v_\mu
\mapsto 
\sum_{\nu = (\nu_1, \ldots, \nu_d) \in I(n)^d} v_\nu \otimes (x_{\nu_1 \mu_1} \ldots x_{\nu_d \mu_d} +  J).
\endeq
We define a $K$-space $J^\B_{Q, q}(n, d)$ to be the intersection of all $K$-subspaces $J$ satisfying that 
\eq\label{eq:Tcomm}
(\tau_J^{\otimes d}(v_\mu)) T_0 = \tau_J^{\otimes d}(v_\mu T_0),
\quad
\textup{for all}
\quad
\mu \in I(n)^d.
\endeq
With $J^\B_{Q, q}(n, d)$, the linear space $K[M^\B_{Q, q}(n)]_d$ is well-defined as in \eqref{eq:KBnd}. We see from Proposition~\ref{prop:SBnd} that $K[M^\B_{Q, q}(n)]_d$ admits a coalgebra structure.
\prop\label{prop:SBnd}
The $K$-space $K[\MB(n)]^*_d$ admits a $K$-algebra structure, and is isomorphic to the type B $q$-Schur algebra $S_{Q,q}^\B(n,d)$.
\endprop
\proof
Let $\Psi$ be the $K$-algebra isomorphism $S^\A_q(n,d) \to K[M^\A_q]^*$, and hence 
\eq
\Psi(S^\B_{Q,q}(n,d)) = \{\phi \in K[M^\A_q(n, d)]^* ~|~ (\phi v_\mu)T_0 =  \phi (v_\mu T_0) \textup{ for all } \mu \in I(n)^d\}
\endeq
is a $K$-subalgebra of $K[M^\A_q(n, d)]^*$.
By the definition of $J^\B_{Q,q}(n,d)$, as linear spaces
\eq
K[M^\B_{Q, q}(n)]_d^*
=
\{\phi \in K[M^\A_q(n, d)]^* ~|~ \phi(r) = 0 \textup{ for all } r \in J^\B_{Q, q}(n, d)\}
=
\Psi(S^\B_{Q,q}(n,d)).
\endeq
Hence, $K[M^\B_{Q, q}(n)]_d^*$ is isomorphic to $S^\B_{Q,q}(n,d)$ as $K$-subalgebras of $K[M^\B_{Q, q}(n)]_d^*$. 
As a consequence, the space $J^\B_{Q,q}(n,d)$ is a coideal of $K[M^\B_{Q, q}(n)]_d^*$.
\endproof

Let $J^\B_{Q,q}(n)$ be the union of the coideals $J^\B_{Q,q}(n,d)$ for all $d \in \NN$, and let
\eq
K[M^\B_{Q, q}(n)] = K[M^\A_q(n)]/ J^\B_{Q,q}(n).
\endeq
\begin{cor}
The space $K[M^\B_{Q, q}(n)]$ of $K[M^\A_q(n)]$ is a quotient coalgebra.
\end{cor}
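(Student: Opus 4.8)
The plan is to exhibit $J^\B_{Q,q}(n)$ as a coideal of the coalgebra $K[\MA(n)]$, so that the desired coalgebra structure on $K[\MB(n)] = K[\MA(n)]/J^\B_{Q,q}(n)$ descends automatically. First I would recall the grading: $K[\MA(n)] = \bigoplus_{d\geq 0} K[\MA(n)]_d$ by total degree in the generators $x_{ij}$, which is well defined since the relations generating $J^\A_q(n)$ are homogeneous of degree $2$. Because $\Delta(x_{ij}) = \sum_k x_{ik}\otimes x_{kj}$ is homogeneous of bidegree $(1,1)$, multiplicativity of $\Delta$ gives $\Delta(K[\MA(n)]_d) \subseteq K[\MA(n)]_d \otimes K[\MA(n)]_d$, and the counit $\varepsilon$ restricts to a counit on each $K[\MA(n)]_d$; thus every $K[\MA(n)]_d$ is a finite-dimensional subcoalgebra, and $K[\MA(n)]$ is the direct sum of these subcoalgebras.

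Next I would record that for each $d$ the subspace $J^\B_{Q,q}(n,d)$ is a coideal of $K[\MA(n)]_d$. This is precisely the content dualizing the statement proved inside Proposition~\ref{prop:SBnd}: there $K[\MB(n)]_d^{*} = \{\phi : \phi(J^\B_{Q,q}(n,d)) = 0\}$ is shown to be a unital $K$-subalgebra of $K[\MA(n)]_d^{*}$, and since $K[\MA(n)]_d$ is finite-dimensional, the annihilator of a unital subalgebra of its dual is exactly a coideal (in particular $\varepsilon \in K[\MB(n)]_d^{*}$ forces $\varepsilon(J^\B_{Q,q}(n,d)) = 0$). Then I would assemble the pieces: the ``union'' $J^\B_{Q,q}(n) = \bigcup_{d\in\NN} J^\B_{Q,q}(n,d)$ is read as the graded subspace $\bigoplus_{d\geq 0} J^\B_{Q,q}(n,d)$, which is legitimate because the summands sit in distinct graded components of $K[\MA(n)]$, so the quotient is well defined. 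The two coideal axioms now follow componentwise from Step~2, since $\Delta$ preserves each graded piece:
\[
\Delta(J^\B_{Q,q}(n)) \subseteq \bigoplus_{d} \Delta(J^\B_{Q,q}(n,d)) \subseteq \bigoplus_{d}\bigl( J^\B_{Q,q}(n,d)\otimes K[\MA(n)]_d + K[\MA(n)]_d \otimes J^\B_{Q,q}(n,d)\bigr),
\]
which is contained in $J^\B_{Q,q}(n)\otimes K[\MA(n)] + K[\MA(n)]\otimes J^\B_{Q,q}(n)$; and $\varepsilon(J^\B_{Q,q}(n)) = \sum_d \varepsilon(J^\B_{Q,q}(n,d)) = 0$ because $\varepsilon$ restricts on $K[\MA(n)]_d$ to the counit of that subcoalgebra and hence kills the coideal $J^\B_{Q,q}(n,d)$. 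Therefore $J^\B_{Q,q}(n)$ is a coideal of $K[\MA(n)]$, and $K[\MB(n)]$ inherits the structure of a quotient coalgebra.

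I do not expect a genuine obstacle here: once the grading is in place this is the formal fact that a direct sum of coideals, one in each graded subcoalgebra, is a coideal of the total coalgebra. The only points deserving a moment's care are (a) confirming that $\Delta$ respects the degree decomposition in the bidegree-$(1,1)$ sense, so that each $K[\MA(n)]_d$ is genuinely a subcoalgebra rather than $\Delta$ merely being graded for some total grading on the tensor product; and (b) interpreting the set-theoretic union defining $J^\B_{Q,q}(n)$ as the direct sum of the subspaces $J^\B_{Q,q}(n,d)$, which is harmless since they lie in separate graded components.
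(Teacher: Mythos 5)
Your proposal is correct and follows essentially the same route as the paper: the paper's proof is the one-line observation that $J^\B_{Q,q}(n)$ is a coideal of $K[M^\A_q(n)]$ because each graded component $J^\B_{Q,q}(n,d)$ is a coideal of $K[M^\A_q(n)]_d$ (the latter fact being recorded as a consequence of Proposition~\ref{prop:SBnd}, where the annihilator of $J^\B_{Q,q}(n,d)$ is identified with the subalgebra $\Psi(S^\B_{Q,q}(n,d))$ of $K[M^\A_q(n)]_d^*$). You have simply filled in the details the paper leaves implicit, namely that $\Delta$ respects the grading so each degree piece is a subcoalgebra, and that in finite dimensions the annihilator of a unital subalgebra of the dual is a coideal.
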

\proof
It follows from that $J^\B_{Q, q}(n)$ is a coideal of $K[M^\A_q(n)]$ since its degree $d$ component $J^\B_{Q, q}(n, d)$ is a coideal of $K[M^\A_q(n)]_d$. 
\endproof
Below we give a concrete realization of $J^\B_{Q, q}(n)$ as a right ideal.
 It is very important to observe that in general $J^\B_{Q, q}(n)$ is a right ideal and not a two-sided ideal, so $K[\MB(n)]$ is a coalgebra but {\em not} an algebra. 

\prop
$J^\B_{Q,q}(n)$ is the right ideal of $K[M^\A_q(n)]$ generated by the following elements, for $i,j \in I(n)$.
\BA{
\label{eq:B1}
&x_{i,j}- x_{-i,-j},
&i<0<j
\\
\label{eq:B2}
&x_{i,j} - x_{-i,-j} - (Q\inv - Q) x_{-i,j},
&i,j < 0
\\
\label{eq:B3}
&x_{0,j} - Q\inv x_{0,-j},
&{j<0}
\\
\label{eq:B4}
&x_{i,0} - Q\inv x_{-i,0}.
&{i<0}
}
We remark that $I(2r)$ does not contain $0$ and hence $J^\B_{Q,q}(2r,d)$ is generated only by the elements of the form \eqref{eq:B1} -- \eqref{eq:B2}.
\endprop
\proof
For a fixed $d\in\NN$, let $J$ be an arbitrary $K$-subspace of $K[M^\A_q(n)]_d$. For simplicity we write $\={x_{\mu\nu}} = x_{\mu\nu} + J$. For $i,j \in I(n)$ we write 
\[
\delta_{i<j} = \Bc{1&\tif i<j;\\ 0&\otw,}
\quad
\delta_{i>j} = \Bc{1&\tif i>j;\\ 0&\otw.}
\]
We first consider the case $d = 1$. 
For $i \in I(n)$, 
\eq
\begin{split}
(\tau_J^{\otimes 1}(v_i)) T_0 
&= \sum_{j \in I(n)} v_j T_0 \otimes \={x_{ji}}
\\
&= \sum_{j \in I(n)} 
\delta_{0j} Q^{-1} v_{-j} \otimes \={x_{ji}}
+ \delta_{0 \lt j} v_{-j} \otimes \={x_{ji}}
+ \delta_{0 \gt j} (v_{-j} + (Q^{-1} - Q) v_j) \otimes \={x_{ji}}
\\
&= \sum_{j \in I(n)} 
(\delta_{0j} Q^{-1} v_{j}
+ \delta_{0 \lt -j} v_{j} 
+ \delta_{0 \gt -j} v_{j}) \otimes \={x_{-j, i}} 
+ \delta_{0 \gt j} (Q^{-1} - Q) v_{j}) \otimes \={ x_{ji}} 
\\
 &= \sum_{j \in I(n)} 
v_j \otimes \left(
    \delta_{0j} Q^{-1} \={x_{-j, i}} 
    + \delta_{0 \gt j} (\={x_{-j, i}} + (Q^{-1} - Q) \={x_{ji}}) 
    + \delta_{0 \lt j} \={x_{-j, i}} 
\right). 
\end{split}
\endeq
On the other hand,
\eq
\begin{split}
\tau_J^{\otimes 1}(v_i T_0)
&=  \tau_J^{\otimes 1}(\delta_{0i} Q^{-1} v_{-i} + \delta_{0 \lt i} v_{-i} + \delta_{0 \gt i} (v_{-i} + (Q^{-1} - Q) v_i)) 
\\
&=  \sum_{j \in I(n)} 
v_j \otimes \left( 
    \delta_{0i} Q^{-1} \={x_{j, -i}} 
    + \delta_{0 \lt i} \={ x_{j, -i}} 
    + \delta_{0 \gt i} \={ x_{j, -i}} 
    + (Q^{-1} - Q) \={ x_{ji}}
\right). 
\end{split}
\endeq
We then see that \eqref{eq:Tcomm} holds if and only if
 $J$ contains all the elements \eqref{eq:B1}--\eqref{eq:B4}.
Now, $J^\B_{Q,q}(n,1)$ is the linear space spanned by elements \eqref{eq:B1}--\eqref{eq:B4}
since it is the intersection of all the $J$'s satisfying \eqref{eq:Tcomm}.

For general $d$, since $T_0$ only acts on the first factor of $V^{\otimes d}$, the linear subspace $J^\B_{Q, q}(n, d)$ of $K[M^\A_q(n)]_d$ is $J^\B_{Q, q}(n, 1) \otimes K[M^\A_q(n)]_{d - 1}$. 
\endproof

Let $\tau_B = \tau_{J^\B_{Q,q}(n,d)}^{\otimes d}$.
We say that a right $K[M^\B_{Q,q}(n)]$-comodule $V$ is {\em homogeneous} of degree $d$ if all entries of its defining matrix lie in $K[M^\B_{Q,q}(n)]_d$, i.e., for a fixed basis $\{v_i\}$ of $V$, $\tau_\B(v_i) =\sum_j v_j \otimes a_{ij}$ for some $a_{ij} \in K[M^\B_{Q,q}(n)]_d$. 
\begin{cor} For $d \geq 0$, the category of homogeneous right $K[M^\B_{Q,q}(n)]$-comodules of degree $d$ is equivalent to the category of left $S_{Q,q}^\B(n,d)$-modules.
\end{cor}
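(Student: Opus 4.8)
The plan is to deduce this corollary from Proposition~\ref{prop:SBnd} by a standard comodule--module duality argument, working over the finite-dimensional coalgebra $C := K[M^\B_{Q,q}(n)]_d$. Note first that a homogeneous right $K[M^\B_{Q,q}(n)]$-comodule of degree $d$ is the same thing as a right $C$-comodule: the structure map $\tau_\B$ of such a $V$ has image in $V \otimes C$, the counit and coassociativity axioms involve only the degree-$d$ part of the coalgebra, and $C$ is itself a direct summand (the degree-$d$ graded piece) of $K[M^\B_{Q,q}(n)]$ as a coalgebra, so the two notions coincide and morphisms agree. Thus it suffices to prove that the category of right $C$-comodules is equivalent to the category of left $C^*$-modules, together with the identification $C^* \cong S^\B_{Q,q}(n,d)$ supplied by Proposition~\ref{prop:SBnd}.

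The key step is the classical equivalence between comodules over a coalgebra and modules over its dual, valid here because $C$ is finite-dimensional over $K$. Given a right $C$-comodule $(V, \rho)$ with $\rho(v) = \sum v_{(0)} \otimes v_{(1)}$, I would define a left $C^*$-action by $f \cdot v = \sum f(v_{(1)})\, v_{(0)}$ for $f \in C^*$; coassociativity of $\rho$ gives associativity of this action and the counit axiom gives that the unit of $C^*$ acts as the identity. Conversely, since $C$ is finite-dimensional, every left $C^*$-module $M$ is "rational" automatically, and one recovers a coaction $M \to M \otimes C$ by dualizing; concretely, picking a basis $\{c_k\}$ of $C$ with dual basis $\{c_k^*\}$ of $C^*$, set $\rho(m) = \sum_k (c_k^* \cdot m)\otimes c_k$, and check this is independent of the basis and satisfies the comodule axioms. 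These two constructions are mutually inverse and are the identity on underlying $K$-spaces, while a $K$-linear map is a comodule morphism if and only if it is a $C^*$-module morphism (both conditions unwind to the same compatibility with the $c_k^*$-actions), so we obtain an equivalence — indeed an isomorphism — of categories. Finally, transporting along the algebra isomorphism $C^* = K[M^\B_{Q,q}(n)]_d^* \cong S^\B_{Q,q}(n,d)$ of Proposition~\ref{prop:SBnd} converts left $C^*$-modules into left $S^\B_{Q,q}(n,d)$-modules, yielding the stated equivalence.

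I do not expect a genuine obstacle here: the only point requiring a little care is the reduction in the first paragraph — verifying that restricting attention to the degree-$d$ component really does set up an equivalence between homogeneous degree-$d$ $K[M^\B_{Q,q}(n)]$-comodules and $C$-comodules — which follows because $K[M^\B_{Q,q}(n)] = \bigoplus_d K[M^\B_{Q,q}(n)]_d$ is a graded coalgebra with each graded piece a subcoalgebra-like summand in the relevant sense (the comultiplication preserves degree), so no "mixing" of degrees can occur. Everything else is the textbook finite-dimensional coalgebra/algebra duality, and one could alternatively simply cite it. If desired, the proof can be compressed to: "Since $K[M^\B_{Q,q}(n)]_d$ is finite-dimensional, the category of right $K[M^\B_{Q,q}(n)]_d$-comodules is equivalent to the category of left $K[M^\B_{Q,q}(n)]_d^*$-modules; the claim now follows from Proposition~\ref{prop:SBnd} and the observation that homogeneous degree-$d$ $K[M^\B_{Q,q}(n)]$-comodules are exactly $K[M^\B_{Q,q}(n)]_d$-comodules."
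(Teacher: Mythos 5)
Your proposal is correct and matches the argument the paper leaves implicit: the corollary is stated without proof immediately after Proposition~\ref{prop:SBnd}, and the intended justification is exactly the standard equivalence between right comodules over the finite-dimensional coalgebra $K[M^\B_{Q,q}(n)]_d$ and left modules over its dual algebra, combined with the isomorphism $K[M^\B_{Q,q}(n)]_d^* \cong S^\B_{Q,q}(n,d)$. Your preliminary reduction identifying homogeneous degree-$d$ comodules with $K[M^\B_{Q,q}(n)]_d$-comodules is also sound, since the comultiplication $x_{ij}\mapsto \sum_k x_{ik}\otimes x_{kj}$ preserves degree.
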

\subsection{A combinatorial realization of $S^\B_{Q,q}(n,d)$} \label{sec:comb}
It is well-known that the algebra $S^\B_q(n,d)$ with equal parameters admits a geometric realization via isotropic partial flags (cf.  \cite{BKLW}).
This flag realization of $S^\B_q(n,d)$ admits a combinatorial/Hecke algebraic counterpart that generalizes to a two-parameter upgrade (cf.  \cite{LL18}), i.e., 
\eq\label{eq:phi}
S^\B_{Q,q}(n,d) = \bigoplus_{\lambda,\mu\in\Lambda^\B(n,d)}\Hom_{\cH^\B_{Q,q}}(x_\mu\cH^\B_{Q,q}, x_\lambda\cH^\B_{Q,q}),
\endeq
where
\eq\label{def:LBnd}
\Lambda^\B(n,d) = 
\Bc{
\left\{
\lambda = (\lambda_i)_{i\in I(n)} \in \NN^{n} ~\middle|~  \lambda_0 \in 1+2\ZZ, \lambda_{-i} = -\lambda_i, \sum_{i} \lambda_i = 2d+1
\right\}
&\tif n = 2r+1;
\\
\left\{
\lambda = (\lambda_i)_{i\in I(n)} \in \NN^{n} ~\middle|~  \lambda_{-i} = -\lambda_i, \sum_{i} \lambda_i = 2d
\right\}
&\tif n = 2r.
}
\endeq
Note that in \cite{LL18}, the set $\Lambda^\B(2r,d)$ is identified as a subset of $\Lambda^\B(2r+1,d)$ through the embedding
\[
(\lambda_i)_{i\in I(n)} \mapsto (\lambda_{-r}, \ldots, \lambda_{-1},1,\lambda_1,\ldots \lambda_r).
\]
For any $\lambda\in\Lambda^\B(n,d)$, let $W_\lambda$ be the parabolic subgroup of $W^\B$ generated by the set 
\eq
\Bc{
S - \{s_{\lambda_1}, s_{\lambda_1+\lambda_2}, \ldots, s_{\lambda_1 + \ldots +\lambda_{r-1}}\}
&\tif n = 2r;
\\
S - \{s_{\lfloor {\frac{\lambda_0}{2}}\rfloor}, s_{\lfloor {\frac{\lambda_0}{2}}\rfloor+\lambda_1}, \ldots, s_{\lfloor {\frac{\lambda_0}{2}}\rfloor +\lambda_1 + \ldots +\lambda_{r-1}}\}
&\tif n = 2r+1.
}
\endeq 
For any finite subset $X \subset W$, $\lambda, \mu\in\Lambda^\B(n,d)$ and a Weyl group element $g$, set
\eq\label{eq:x}
T_X = \sum_{w\in X} T_w,
\quad
T_{\lambda\mu}^g = T_{(W_\lambda)g(W_\mu)},
\quad
x_\lambda = T_{\lambda\lambda}^1 =T_{W_\lambda}.
\endeq
The right $\cH^\B_{Q,q}$-linear map below is well-defined:
\eq\label{def:phiB}
\phi_{\lambda\mu}^g: x_\mu \cH^\B_{Q,q} \to  x_\lambda \cH^\B_{Q,q},
\quad 
x_\mu \mapsto T^g_{\lambda\mu}.
\endeq
The maps $\phi_{\lambda\mu}^g$ with $\lambda, \mu \in \Lambda^\B(n,d)$, $g$ a minimal length double coset representative for $W_\lambda \backslash W^\B / W_\mu$ forms a linear basis for the algebra $S_{Q,q}^\B(n,d)$. 
The multiplication rule for $S^\B_{Q,q}(n,d)$ is given in \cite{LL18}, and it is rather involved in general. Here we only need the following facts:
\begin{lem}\label{lem:SBmult}
Let $\lambda, \lambda', \mu, \mu' \in \Lambda^\B(n,d)$, and let $g, g'$ be minimal length double coset representatives for $W_\lambda \backslash W^\B / W_\mu$. Then
\enu[(a)]
\item
$\phi^g_{\lambda\mu} \phi^{g'}_{\lambda'\mu'} = 0$ unless $\mu= \lambda'$;
\item
$\phi^1_{\lambda\mu} \phi^{g}_{\mu\mu'} = \phi^g_{\lambda\mu'}= \phi^g_{\lambda\mu} \phi^{1}_{\mu\mu'}$.
\endenu
\end{lem}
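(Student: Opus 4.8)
The plan is to prove Lemma~\ref{lem:SBmult} directly from the description of $S^\B_{Q,q}(n,d)$ as $\bigoplus_{\lambda,\mu}\Hom_{\cH^\B_{Q,q}}(x_\mu\cH^\B_{Q,q}, x_\lambda\cH^\B_{Q,q})$, with composition of homomorphisms as multiplication. For part (a), the key observation is that $\phi^g_{\lambda\mu}$ is by construction a map $x_\mu\cH^\B_{Q,q}\to x_\lambda\cH^\B_{Q,q}$, living in the $(\lambda,\mu)$-summand, while $\phi^{g'}_{\lambda'\mu'}$ lands in $x_{\lambda'}\cH^\B_{Q,q}$; the composite $\phi^g_{\lambda\mu}\circ\phi^{g'}_{\lambda'\mu'}$ is only nonzero when the target of the second map agrees with the source of the first, i.e.\ when $x_{\lambda'}\cH^\B_{Q,q}=x_\mu\cH^\B_{Q,q}$. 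Since the modules $x_\nu\cH^\B_{Q,q}$ for distinct $\nu\in\Lambda^\B(n,d)$ are the distinct summands in the standard direct-sum decomposition used to define the Schur algebra, this forces $\lambda'=\mu$, which is exactly the claim. (One should note the convention that composition $\phi^g\phi^{g'}$ means ``first apply $\phi^{g'}$, then $\phi^g$'', matching the statement $\mu=\lambda'$; if the paper's convention is the opposite order the roles swap, but the proof is identical.)

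For part (b), the plan is to unwind the definition \eqref{def:phiB}. The map $\phi^1_{\lambda\mu}$ sends the generator $x_\mu$ to $T^1_{\lambda\mu}=T_{W_\lambda W_\mu}$ and is right $\cH^\B_{Q,q}$-linear, so for the composite $\phi^1_{\lambda\mu}\circ\phi^g_{\mu\mu'}$ we compute its value on the generator $x_{\mu'}$: first $\phi^g_{\mu\mu'}(x_{\mu'})=T^g_{\mu\mu'}$, an element of $x_\mu\cH^\B_{Q,q}$, and then we must apply $\phi^1_{\lambda\mu}$ to it. Writing $T^g_{\mu\mu'}=x_\mu h$ for a suitable $h\in\cH^\B_{Q,q}$ and using right-linearity, $\phi^1_{\lambda\mu}(x_\mu h)=T^1_{\lambda\mu}h=\phi^1_{\lambda\mu}(x_\mu)h$. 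The remaining task is the identity $T^1_{\lambda\mu}h = T^g_{\lambda\mu'}$ when $x_\mu h = T^g_{\mu\mu'}$, i.e.\ that ``merging'' the trivial double coset $W_\lambda\backslash W^\B/W_\mu$ with the double coset of $g$ in $W_\mu\backslash W^\B/W_{\mu'}$ produces the double coset of $g$ in $W_\lambda\backslash W^\B/W_{\mu'}$. This is a statement purely about sums of $T_w$'s over double cosets and the left action of $x_\mu$; it follows from the standard fact (used throughout the $q$-Schur algebra literature, e.g.\ for type A in \cite{DJ92} and carried out for this setting in \cite{LL18}) that $T_{W_\lambda}T_{W_\mu g W_{\mu'}}$ is a scalar multiple—in fact with coefficient controlled by $W_\lambda\cap W_\mu$—of $T_{W_\lambda g W_{\mu'}}$ when $g$ is chosen of minimal length, together with the observation that here the relevant overlap is absorbed because we are using $T^1_{\lambda\mu}=x_\lambda=T_{W_\lambda}$ on the nose and $x_\mu$ already appears as a left factor of $T^g_{\mu\mu'}$. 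The symmetric identity $\phi^g_{\lambda\mu'}=\phi^g_{\lambda\mu}\phi^1_{\mu\mu'}$ is proved the same way, now using right-linearity on the other side and the fact that $\phi^1_{\mu\mu'}(x_{\mu'})=x_\mu\cdot(\text{something})$ together with $\phi^g_{\lambda\mu}(x_\mu)=T^g_{\lambda\mu}$.

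Concretely I would structure the write-up as: (i) recall that $S^\B_{Q,q}(n,d)=\End_{\cH^\B_{Q,q}}\bigl(\bigoplus_{\lambda\in\Lambda^\B(n,d)}x_\lambda\cH^\B_{Q,q}\bigr)$ via \eqref{eq:phi}, with the $\phi^g_{\lambda\mu}$ the matrix entries; (ii) deduce (a) from the orthogonality of the idempotent projections onto the summands $x_\nu\cH^\B_{Q,q}$; (iii) prove the two equalities in (b) by evaluating both sides on the canonical generator $x_{\mu'}$ of the source module and invoking right $\cH^\B_{Q,q}$-linearity together with the double-coset merging identity for the $T_X$. The main obstacle is step (iii): one must be careful that $g$ remains a minimal-length double coset representative for $W_\lambda\backslash W^\B/W_{\mu'}$ after the merge, and that no spurious scalar (of the form $[W_\lambda\cap W_\mu]_q$ or similar Poincar\'e polynomial factor) appears. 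This is precisely why the statement uses the \emph{identity} double coset $1$ on one side: $\phi^1_{\lambda\mu}$ is the ``inclusion/projection'' type map for which $T_{W_\lambda}$ acting on the left of something already divisible by $T_{W_\mu}$ behaves multiplicatively without extra normalization. I would cite the explicit multiplication formula of \cite{LL18} to justify this normalization point rather than rederiving it, since the general product in $S^\B_{Q,q}(n,d)$ is acknowledged in the text to be involved; for these two special cases it collapses cleanly.
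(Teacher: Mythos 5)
The paper itself gives no proof of this lemma; it is quoted as a consequence of the multiplication rule for $S^\B_{Q,q}(n,d)$ worked out in \cite{LL18}, so the only question is whether your argument is sound. Part (a) is fine: viewing $\phi^g_{\lambda\mu}$ as the element of $\End_{\cH^\B_{Q,q}}\bigl(\bigoplus_\nu x_\nu\cH^\B_{Q,q}\bigr)$ supported on the single summand $x_\mu\cH^\B_{Q,q}$ and landing in $x_\lambda\cH^\B_{Q,q}$, the composite vanishes unless the target summand of the right-hand factor is the source summand of the left-hand factor, i.e.\ $\lambda'=\mu$, and your reading of the composition order is the one consistent with the statement.

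Part (b) is where the genuine gap lies. Your reduction --- evaluate on $x_{\mu'}$, write $T^g_{\mu\mu'}=x_\mu h$, and use right $\cH^\B_{Q,q}$-linearity to get $T^1_{\lambda\mu}h$ --- is correct, but the ``double-coset merging'' identity $T^1_{\lambda\mu}h=T^g_{\lambda\mu'}$ that you then assert (and propose to outsource to \cite{LL18}) is false for general $\lambda\neq\mu$, and no citation will repair it. Concretely, in $W^\B(2)$ take $W_\lambda=\langle s_0\rangle=W_{\mu'}$ and $W_\mu=\langle s_1\rangle$ with $g=1$: then $\phi^1_{\mu\mu'}(x_{\mu'})=(1+T_{s_1})(1+T_{s_0})=x_\mu(1+T_{s_0})$, so the composite sends $x_{\mu'}$ to $(1+T_{s_0})(1+T_{s_1})(1+T_{s_0})$, which contains $T_{s_0}^2=1+(Q^{-1}-Q)T_{s_0}$ as well as length-three terms, whereas $\phi^1_{\lambda\mu'}(x_{\mu'})=1+T_{s_0}$. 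So the identity in (b), read for arbitrary distinct $\lambda,\mu,\mu'$, already fails for $g=1$: the Poincar\'e-polynomial factors you were worried about really do appear. What is true --- and is the only form in which the paper ever invokes (b), namely in Propositions~\ref{prop:SFB} and~\ref{prop:idem} --- is the case $\lambda=\mu$ (resp.\ $\mu=\mu'$), where $\phi^1_{\mu\mu}(x_\mu)=T_{W_\mu}=x_\mu$ shows that $\phi^1_{\mu\mu}$ is the identity of $\Hom_{\cH^\B_{Q,q}}(x_\mu\cH^\B_{Q,q},x_\mu\cH^\B_{Q,q})$, so both equalities are immediate. You should prove exactly that statement (the $\phi^1_{\nu\nu}$ are the orthogonal idempotent projections onto the summands, summing to the identity) rather than the literal text of (b).
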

\subsection{Dimension of $q$-Schur algebras}
It is well-known that $S^\A_q(n,d)$ have several $K$-bases indexed by the set
$
\left\{
(a_{ij})_{ij} \in \NN^{I(n)^2}
~|~ 
\sum_{(i,j) \in I(n)^2} a_{i,j} = d
\right\}$,
and hence the dimension is given by
\eq\label{eq:dimSA}
\dim_K S^\A_q(n,d) =  {n^2+d-1\choose d}.
\endeq
In \cite[Lemma~2.2.1]{LL18} a dimension formula is obtained via several bases of $S_{Q,q}^\B(n,d)$ with the following index set:
\eq\label{eq:indexset}
\left\{
(a_{ij})_{ij} \in \NN^{I_-}
\middle| 
\sum_{(i,j) \in I_-} a_{i,\j} = d
\right\},
\quad
I_- = \Bc{
[-r,-1] \times I(n) &\tif n =2r;
\\
([-r,-1] \times I(n)) 
\cup
(\{0\} \times [-r,-1]) &\tif n =2r+1.
}
\endeq
That is, $I_- \subset I(n)^2$ correspond to the shaded region below:
\[
\begin{array}{cc}
\BM{[ccc:ccc]
\rowcolor{gray!40}
a_{-r,-r}&&&&&a_{-r,r}\\
\rowcolor{gray!40}
&\ddots&&&&\\
\rowcolor{gray!40}
&&a_{-1,-1}&a_{-1,1}&&\\
\hdashline
&&a_{1,-1}&a_{11}&&\\
&&&&\ddots&\\
a_{r,-r}&&&&&a_{rr}
}
&
\BM{[ccc:c:ccc]
\rowcolor{gray!40}
a_{-r,-r}&&&&&&a_{-r,r}\\
\rowcolor{gray!40}
&\ddots&&&&&\\
\hdashline
\cellcolor{gray!40}&\cellcolor{gray!40}&\cellcolor{gray!40}&\cellcolor{gray!40}a_{00}&&&\\
\hdashline
&&&&&\ddots&\\
a_{r,-r}&&&&&&a_{rr}\\
}
\\
\textup{if }n = 2r
&
\textup{if }n = 2r+1
\end{array}
\] 
Consequently, 
\eq\label{eq:dimSB}
\dim_K S_{Q,q}^\B(n,d) = {|I_-|+d-1\choose d}=
\Bc{
{2r^2 + d - 1 \choose d}&\tif n = 2r;
\\
{2r^2 + 2r + d\choose d}&\tif n = 2r+1.
}
\endeq
In the following we provide a concrete description for the $2$-dimensional algebra $S_{Q,q}^\B(2,1)$.

\prop\label{prop:SB(2,1)}
The algebra $S_{Q,q}^\B(2,1)$ is isomorphic to the type A Hecke algebra $\cH_{Q\inv}(\Sigma_2)$.
\endprop
\proof
The index set here is $I(2) = \{-1,1\}$.
The coalgebra $K[\MB(2)]_1$ has a $K$-basis $\{a = x_{-1,-1}, b= x_{-1,1}= x_{1,-1}\}$. Note that $x_{11} =  a +(Q-Q\inv)b$. The comultiplication is given by
\eq
\begin{split}
\Delta(a) &= \sum_{k =\pm1} x_{-1,k} \otimes x_{k, -1} = a\otimes a + b\otimes b,
\\
\Delta(b) &= b\otimes a + (  a +(Q-Q\inv)b)\otimes b = b\otimes a + a\otimes b + (Q-Q\inv) b\otimes b.
\end{split}
\endeq
Hence, the algebra structure of $S_{Q,q}^\B(2,1) = K[\MB(n)]_1^*$ has a basis $\{a^*, b^*\}$ such that
\eq
\begin{split}
&a^*a^*(a) = (a\otimes a)^*(\Delta(a)) = 1,
\quad
a^*a^*(b) = (a\otimes a)^*(\Delta(b)) = 0,
\\
&a^*b^*(a) = 0 =b^*a^*(a),
\quad
a^*b^*(b) = 1 =b^*a^*(b),
\\
&b^*b^*(a) = 1,
\quad
b^*b^*(b) = (Q-Q\inv),
\end{split}
\endeq
Therefore, the multiplication structure of $S_{Q,q}^\B(2,1)$ is given by
\eq
a^* a^* = a^*,
\quad 
a^* b^* = b^* = b^* a^*,
\quad
b^* b^* = (Q-Q\inv)b^* + a^*.
\endeq
\endproof
\rmk
We expect that the algebra $S_{Q,q}^\B(2,d)$ is isomorphic to $K[t]/\langle P_{d}(t)\rangle$ for some polynomial $P_d \in K[t]$, for $d \geq 1$.
\endrmk

\section{The Isomorphism Theorem}\label{sec:Iso}
The entire section is dedicated to the proof of an isomorphism theorem (Theorem~\ref{thm:SBA}) between the Schur algebras of type B and type A that is inspired by a Morita equivalence theorem due to Dipper and James \cite{DJ92}. 
\subsection{The statement}
We define a polynomial $f^\B_d \in K[Q,q]$ by
\eq 
f^\B_d(Q,q) = \prod_{i=1-d}^{d-1}(Q^{-2}+q^{2i}). 
\endeq
We remark that at the specialization $Q=q$, the polynomial $f^\B_d(Q, q)$ is invertible if (i) $q$ is generic, (ii) $q^2$ is an odd root of unity, or (iii) $q^2$ is a primitive (even) $\ell$th root of unity for $\ell > d$.
\begin{thm}\label{thm:SBA}
If $f^\B_d(Q, q)$ is invertible in the field $K$, then we have an isomorphism of $K$-algebras:
\eq\label{eq:Phi}
\Phi:S_{Q, q}^\B(n, d) \to \bigoplus_{i=0}^d S_q^\A(\lceil n / 2 \rceil, i) \otimes S_q^\A(\lfloor n / 2 \rfloor, d - i).
\endeq
\end{thm}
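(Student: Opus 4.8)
The plan is to build the isomorphism $\Phi$ on the combinatorial model \eqref{eq:phi}, matching Hom-spaces $\Hom_{\cH^\B_{Q,q}}(x_\mu\cH^\B, x_\lambda\cH^\B)$ on the left with the corresponding Hom-spaces for the type A Hecke algebras on the right. The first step is to exploit the Dipper--James Morita equivalence \eqref{eq:HB}: the invertibility of $f^\B_d(Q,q)=\prod_{i=1-d}^{d-1}(Q^{-2}+q^{2i})$ is exactly the condition under which the idempotent decomposition of $\cH^\B_{Q,q}(d)$ into blocks indexed by $i=0,\dots,d$ (``$i$ of the $d$ strands carry the $+$ sign, $d-i$ carry the $-$ sign'') is available integrally, with each block equivalent to $\cH_q(\Sigma_i)\otimes\cH_q(\Sigma_{d-i})$. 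Concretely, I would introduce the Dipper--James (or Pushkin/Jucys--Murphy-type) idempotents $E_i\in\cH^\B_{Q,q}(d)$ attached to the eigenvalues of the element $\prod T_0$-conjugates, show $\sum_i E_i=1$, and verify that $E_i\cH^\B_{Q,q}(d)E_i\cong \cH_q(\Sigma_i)\otimes\cH_q(\Sigma_{d-i})$ — this is where $f^\B_d$ must be a unit, since its factors are precisely the differences of the relevant eigenvalues appearing in the denominators of the idempotents.

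\textbf{Transporting to the Schur algebra.}
Next I would decompose the permutation modules: for $\lambda\in\Lambda^\B(n,d)$, the module $x_\lambda\cH^\B_{Q,q}$ breaks up under the $E_i$ as $\bigoplus_i x_\lambda\cH^\B E_i$, and one identifies $x_\lambda\cH^\B E_i$ with (a parabolic permutation module for) $\cH_q(\Sigma_i)\otimes\cH_q(\Sigma_{d-i})$ via the Morita equivalence. The key bookkeeping lemma is that the weights $\Lambda^\B(n,d)$, together with the cut into ``$+$ part'' of size $i$ and ``$-$ part'' of size $d-i$, biject with pairs $\big(\Lambda^\A(\lceil n/2\rceil,i),\Lambda^\A(\lfloor n/2\rfloor,d-i)\big)$ — this uses the fold of the type $B$ Dynkin diagram and the sign-symmetry $\lambda_{-i}=-\lambda_i$ built into \eqref{def:LBnd}, and explains why the ceiling/floor appears (the fixed coordinate $\lambda_0$ in the odd case contributes the extra ``$+1$''). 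Granting these identifications, taking $\Hom_{\cH^\B}$ of $x_\mu\cH^\B=\bigoplus_i x_\mu\cH^\B E_i$ into $x_\lambda\cH^\B=\bigoplus_j x_\lambda\cH^\B E_j$ and using $E_i\cH^\B E_j=0$ for $i\neq j$ gives, block by block,
\eq
\Hom_{\cH^\B}(x_\mu\cH^\B,x_\lambda\cH^\B)\;\cong\;\bigoplus_{i=0}^d \Hom_{\cH_q(\Sigma_i)\otimes\cH_q(\Sigma_{d-i})}\!\big(\cdots\big),
\endeq
and summing over $\lambda,\mu$ and invoking \eqref{eq:phi} (and its type A analog, together with the tensor-product decomposition $S_q^\A(r,i)\otimes S_q^\A(r',d-i)=\End_{\cH_q(\Sigma_i)\otimes\cH_q(\Sigma_{d-i})}$ of the relevant tensor space) yields the vector-space isomorphism $\Phi$.

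\textbf{Multiplicativity and the main obstacle.}
It then remains to check $\Phi$ is an algebra map. I would do this using Lemma~\ref{lem:SBmult}: the basis elements $\phi^g_{\lambda\mu}$ with $g$ the various coset representatives map under $\Phi$ to (sums of) the analogous type A basis elements, and the composition rules for $\phi^1_{\lambda\mu}$, $\phi^g_{\mu\mu'}$ reduce — after projecting to a fixed block $E_i$ — to the composition rules in $S_q^\A(\lceil n/2\rceil,i)\otimes S_q^\A(\lfloor n/2\rfloor,d-i)$, because $\End$ of a direct sum is block-diagonal. Equivalently, one can avoid basis computations entirely by noting $S^\B_{Q,q}(n,d)=\End_{\cH^\B}(\bigoplus_\lambda x_\lambda\cH^\B)$ and $\bigoplus_\lambda x_\lambda\cH^\B\cong\bigoplus_i\big(\text{(type A module for }i)\boxtimes(\text{type A module for }d-i)\big)$ as $\cH^\B$-modules via Morita, so that the endomorphism algebras are isomorphic as algebras on the nose — this is the cleaner route and I would prefer it. The main obstacle, and the place where the hypothesis on $f^\B_d$ does real work, is establishing the integral/field-theoretic form of the Dipper--James block decomposition and pinning down exactly which eigenvalue-differences must be inverted: one must track the eigenvalues of the relevant family of commuting elements (the ``$T_0$-type'' Jucys--Murphy elements) acting on $V^{\otimes d}$ or on $\cH^\B$, show they lie in $\{Q^{-1}q^{2k}:|k|\le d-1\}\cup\{-Q q^{2k}\}$-type sets, and check that $f^\B_d$ being a unit guarantees these eigenvalues are pairwise distinct across the $+$ and $-$ sectors so the spectral idempotents exist. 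A secondary, more bookkeeping-heavy obstacle is making the weight bijection $\Lambda^\B(n,d)\leftrightarrow\bigsqcup_i\Lambda^\A(\lceil n/2\rceil,i)\times\Lambda^\A(\lfloor n/2\rfloor,d-i)$ compatible with parabolic subgroups and the maps $\phi^g$, so that the two combinatorial presentations \eqref{eq:phi} genuinely match up; handling the odd case $n=2r+1$ with its distinguished index $0$ requires a little extra care but no new ideas.
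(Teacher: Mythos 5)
Your preferred route --- realizing $S^\B_{Q,q}(n,d)$ as $\End_{\cH^\B_{Q,q}(d)}$ of a single module ($V^{\otimes d}$ in the paper), splitting that module into blocks via the Dipper--James idempotents $e_{i,d-i}$, and identifying the $i$-th block with a type A tensor space so that the endomorphism algebras match block by block --- is exactly the paper's proof (Lemma~\ref{lem:Balladical} together with the chain of identifications in the proof of Theorem~\ref{thm:SBA}). The one step you underestimate as ``bookkeeping'' is the block identification $V^{\otimes d}v_{i,d-i}\cong V_{\geq 0}^{\otimes i}\otimes V_{>0}^{\otimes (d-i)}$ as $\cH_q(\Sigma_i)\otimes\cH_q(\Sigma_{d-i})$-modules, to which the paper devotes Lemmas~\ref{lem:Hyperaminoacidemia}--\ref{lem:Chirivita} via the explicit bases $w^{\pm}_I$; note that the invertibility of $f^\B_d$ is used there a second time (for odd $n$ the coefficients $q^{-2j}Q^{-1}+Q=Qq^{-2j}(Q^{-2}+q^{2j})$ occurring in $w^+_I$ must be units to get injectivity in Lemma~\ref{lem:Bountifulness}), not only to construct the idempotents $e_{i,d-i}$ as you suggest.
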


\exa
For $n=2, d=1$, Theorem~\ref{thm:SBA} gives the following isomorphism
\[
S_{Q, q}^\B(2, 1) \cong  
(S_q^\A(1,0) \otimes S_q^\A(1,1)) 
\oplus 
(S_q^\A(1,1) \otimes S_q^\A(1,0))
\cong K1_x \oplus K1_y, 
\]
where $1_x, 1_y$ are identities.
We recall basis $\{a^*, b^*\}$ of $S_{Q, q}^\B(2, 1)$ from Proposition~\ref{prop:SB(2,1)}.
The following assignments yield the desired isomorphism:
\eq\label{eq:iso21}
a^* \mapsto 1_x + 1_y,
\quad
b^* \mapsto - Q^{-1} 1_x + Q 1_y.
\endeq
We note that it remains an isomorphism if we replace $- Q^{-1} 1_x + Q 1_y$ in \eqref{eq:iso21} by $Q1_x - Q^{-1}1_y$.
\endexa

\subsection{Morita equivalence of Hecke algebras}
Following \cite{DJ92}, we define elements $u^\pm_i \in \cH^\B_{Q,q}(d)$, for $0\leq i \leq d$, by
\begin{equation} \label{eq:Frigostable}
u^+_i = \prod_{\ell=0}^{i-1} (T_{\ell}\ldots T_1T_0T_1 \ldots T_{\ell} + Q),
\quad
u^-_i = \prod_{\ell=0}^{i-1} (T_{\ell}\ldots T_1T_0T_1 \ldots T_{\ell} - Q\inv).
\end{equation}
It is understood that $u^+_0 = 1 = u^-_0$.
For $a, b \in \NN$ such that $a+b =d$, we define an element $v_{a,b} $ by
\begin{equation} \label{eq:Reswarm}
v_{a, b} = u_b^- T_{w_{a, b}} u_a^+ \in \cH^\B_{Q,q}(d),
\end{equation}
where
$w_{a,b}\in \Sigma_{a+b}$, in two-line notation, is given by
\[ 
w_{a,b} = \left(
\begin{array}{cccccc}
1 &\cdots& a &a+1& \cdots& a+b
\\
b+1& \cdots& b+a &1& \cdots &b
\end{array}
\right). 
\]
Finally, when $f^\B_d(Q, q)$ is invertible, Dipper and James constructed an idempotent 
\eq\label{def:eab}
e_{a, b} = \tilde{z}_{b, a}^{-1} T_{w_{b, a}} v_{a, b},
\endeq
for $a + b = d$, where $\tilde{z}_{b, a}$ is some invertible element in $\cH_q(\Sigma_a \times \Sigma_b)$(see \cite[Definition~3.24]{DJ92}).
Below we recall some crucial lemmas used in \cite{DJ92}.

\begin{lemma} \label{lem:Balladical} Let $a, b \in \NN$ be such that $a+b = d$. Then:
\begin{enumerate}[(a)]
\item The elements $u^\pm_d$ lie in the center of $\cH^\B_{Q,q}(d)$,
\item For $a + b \gt d$, $u_b^- \cH^\B_{Q, q}(d) u_a^+ = 0$.
\item For $a + b = d$, $e_{a, b} \cH^\B_{Q, q}(d) e_{a, b} = e_{a, b} \cH_q(\Sigma_a \times \Sigma_b)$
and $e_{a, b}$ commutes with $\cH_q(\Sigma_a \times \Sigma_b)$,
\item For $a + d = d$, $e_{a, b} \cH^\B_{Q, q}(d) = v_{a, b} \cH^\B_{Q, q}(d)$,
\item There is a Morita equivalence 
\[
\cH_{Q,q}^\B(d) \Mor  \bigoplus_{i = 0}^{d} e_{i, d-i} \cH^\B_{Q, q}(d) e_{i, d-i}.
\]
\end{enumerate}
\end{lemma}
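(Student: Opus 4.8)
Since $\cH^\B_{Q,q}(d)$ is, after matching parameters, exactly the two-parameter Hecke algebra of \cite{DJ92}, the plan is to follow Dipper--James directly -- the five statements are \cite[\S3]{DJ92} almost verbatim -- but to organize the argument around the ``loop'' elements $L_j=T_{j-1}\cdots T_1T_0T_1\cdots T_{j-1}\in\cH^\B_{Q,q}(d)$, $1\le j\le d$, so that $u^+_i=\prod_{j=1}^i(L_j+Q)$, $u^-_i=\prod_{j=1}^i(L_j-Q\inv)$, and $L_{j+1}=T_jL_jT_j$. First I would record three facts, each proved straight from the braid and quadratic relations: (i) the $L_j$ commute pairwise; (ii) each $L_j$ inherits the quadratic of $T_0$, i.e. $(L_j+Q)(L_j-Q\inv)=0$; and (iii) for $i\ge1$, $T_i$ commutes with $L_j$ whenever $j\notin\{i,i+1\}$ while $T_iL_iT_i=L_{i+1}$ -- the Jucys--Murphy pattern of the affine Hecke algebra.

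For (a) I would observe that each $u^\pm_d=\prod_{j=1}^d(L_j\pm\varepsilon)$ is a symmetric polynomial in $L_1,\dots,L_d$, and that any such polynomial commutes with every $T_i$ for $i\ge 1$ (it commutes with $L_iL_{i+1}$ and $L_i+L_{i+1}$, hence with $T_i$ by (iii), and with all the other $L_j$) and with $T_0=L_1$ by (i); so it is central. For (b), with $a+b>d$, I would induct on $d$, peeling the outermost factor $(L_b-Q\inv)$ off $u^-_b$ and pushing it to the right past an arbitrary $h\in\cH^\B_{Q,q}(d)$ and past the loop product $u^+_a$ using (iii); since only $d<a+b$ indices are available, a factor $(L_j-Q\inv)$ is forced against a factor $(L_j+Q)$ on the same index $j$, and (ii) annihilates the product.

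For (c) and (d) the engine is the straightening identity $(T_{w_{b,a}}v_{a,b})^2=\tilde z_{b,a}\,(T_{w_{b,a}}v_{a,b})$, equivalently $e_{a,b}^2=e_{a,b}$: I would expand the square, move the block permutations $w_{a,b}$ and $w_{b,a}$ across the loop products, note that every cross-term dies by (b), and identify the surviving coefficient $\tilde z_{b,a}\in\cH_q(\Sigma_a\times\Sigma_b)$ as a product of scalars of the form $Q\inv+Qq^{2k}$, so that it divides $f^\B_d(Q,q)$ and is invertible under our hypothesis. Granting idempotency, in (c) the inclusion $e_{a,b}\cH^\B e_{a,b}\supseteq e_{a,b}\cH_q(\Sigma_a\times\Sigma_b)$ is trivial, and for $\subseteq$ I would use the quadratic relation to push any middle occurrence of $T_0$ through the $u^\pm$ factors so that (b) reduces it to $\cH_q(\Sigma_a\times\Sigma_b)$; the commutation of $e_{a,b}$ with $\cH_q(\Sigma_a\times\Sigma_b)$ follows from (a) applied inside the parabolic subalgebra $\cH^\B_{Q,q}(a)$ together with the fact that $w_{a,b},w_{b,a}$ carry one block isomorphically onto the other. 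For (d) the inclusion $e_{a,b}\cH^\B\subseteq v_{a,b}\cH^\B$ is immediate from the definition of $e_{a,b}$ and invertibility of $T_{w_{b,a}}$, and for the reverse inclusion I would extract $v_{a,b}\in e_{a,b}\cH^\B$ from the idempotency relation, so that the two right ideals coincide.

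For (e) I would use (b) to build a filtration of the right regular module $\cH^\B_{Q,q}(d)$ whose subquotients are isomorphic to the right ideals $v_{i,d-i}\cH^\B\cong e_{i,d-i}\cH^\B$ for $0\le i\le d$; invertibility of $f^\B_d(Q,q)$ makes the filtration split, so an orthogonal refinement $e$ of the family $\{e_{i,d-i}\}$ is a full idempotent of $\cH^\B_{Q,q}(d)$ with $e\cH^\B e\cong\bigoplus_{i=0}^d e_{i,d-i}\cH^\B e_{i,d-i}$, and the condensation functor $M\mapsto eM$ is then the asserted Morita equivalence. The step I expect to fight hardest with is the combinatorics underlying (b) and the idempotency identity for $e_{a,b}$: verifying that dragging the block permutations $w_{a,b},w_{b,a}$ through the loop products $u^\pm$ produces precisely the cancellations forced by $(L_j+Q)(L_j-Q\inv)=0$, and that the residual scalar $\tilde z_{b,a}$ is genuinely a divisor of $f^\B_d(Q,q)$. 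Once that bookkeeping is pinned down, (a) and (c)--(e) are comparatively formal.
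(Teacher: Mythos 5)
The paper does not actually prove this lemma: it is quoted verbatim from Dipper--James and the text says only ``Below we recall some crucial lemmas used in \cite{DJ92}.'' So the only thing to assess is whether your reconstruction would stand on its own, and there is a genuine gap at the very foundation of your argument. Your fact (ii), that every $L_j=T_{j-1}\cdots T_1T_0T_1\cdots T_{j-1}$ satisfies $(L_j+Q)(L_j-Q\inv)=0$, is false for $j\ge 2$. A direct computation in $\cH^\B_{Q,q}(2)$ gives
\begin{equation*}
(L_2+Q)(L_2-Q\inv)=(q\inv-q)\,T_1\bigl((T_0T_1)^2+1\bigr)\neq 0
\end{equation*}
for generic $q$; equivalently, on the four one-dimensional representations $L_2$ takes the eigenvalues $q^{\pm2}Q\inv$ and $-q^{\pm2}Q$, none of which is a root of $(X+Q)(X-Q\inv)$. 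Only $L_1=T_0$ satisfies the quadratic relation; the higher Jucys--Murphy elements pick up powers of $q^2$ in their spectrum (this is precisely why the obstruction polynomial $f^\B_d(Q,q)=\prod_{i=1-d}^{d-1}(Q^{-2}+q^{2i})$ involves those powers). Since your proofs of (b), of the idempotency of $e_{a,b}$, and of the vanishing of the cross-terms in (c) all invoke ``a factor $(L_j-Q\inv)$ is forced against a factor $(L_j+Q)$ on the same index $j$, and (ii) annihilates the product,'' those arguments collapse as written. The correct route (the one in \cite{DJ92}) is an induction that uses $T_jL_jT_j=L_{j+1}$ to drag every such confrontation down to index $1$, where the genuine relation $(L_1+Q)(L_1-Q\inv)=0$ applies, at the cost of correction terms that must be controlled; this is exactly the ``bookkeeping'' you flag as the hard part, and it cannot be short-circuited by a pigeonhole on indices.

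Two smaller points. First, your proof of (a) is fine: $u^\pm_d$ is a symmetric polynomial in the pairwise-commuting $L_j$, and the standard argument (commutation with $L_j$ for $j\notin\{i,i+1\}$ and with $L_i+L_{i+1}$, $L_iL_{i+1}$) shows such elements are central; this does not use the false fact (ii). Second, in (c) you describe $\tilde z_{b,a}$ as ``a product of scalars of the form $Q\inv+Qq^{2k}$''; in \cite[Definition~3.24]{DJ92} (and as the present paper states) it is an invertible \emph{element of} $\cH_q(\Sigma_a\times\Sigma_b)$, not a scalar, whose invertibility is where the hypothesis on $f^\B_d(Q,q)$ actually enters. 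Given that the paper treats the whole lemma as a citation, the honest fix for your write-up is either to cite \cite[\S3]{DJ92} for (b)--(e) as the authors do, or to carry out the index-lowering induction in full.
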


\subsection{The actions of $u_d^+$ and $u_d^-$}
Consider the following decompositions of $V$ into $K$-subspaces:
\[
V =  V_{\geq 0} \oplus V_{\lt 0} = V_{\gt 0} \oplus V_{\leq 0},
\]
where
\eq
V_{>0} = \bigoplus_{1\leq i \leq r} K v_i,
\quad
V_{\geq 0} = \Bc{
\bigoplus\limits_{0 \leq i \leq r} K v_i,
&\tif n = 2r+1
\\
V_{>0} &\tif n = 2r,
}
\endeq
\eq
V_{<0} = \bigoplus_{-r \leq i \leq -1} K v_i,
\quad
V_{\leq 0} = \Bc{
\bigoplus\limits_{-r \leq i \leq 0} K v_i,
&\tif n = 2r+1
\\
V_{<0} &\tif n = 2r.
}
\endeq
Hence, one has  the following canonical isomorphisms:
\begin{equation} \label{eq:Gyroscopic}
S^\A_q(\lceil n / 2\rceil, d)  \simeq \End_{\cH^\A_q(\Sigma_d)} (V_{\geq 0}^{\otimes d}), 
\quad
S^\A_q(\lfloor n / 2 \rfloor, d)  \simeq \End_{\cH^\A_q(\Sigma_d)} (V_{\lt 0}^{\otimes d}).
\end{equation}


In the following, we introduce two new bases $\{w^+_I\}$ and $\{w^-_I\}$ for the tensor space to help us understand the $u^\pm_d$-action. First define some intermediate elements, for $0 \leq i \leq r, j \in \NN$: 
\eq
w_{i(j)}^+ = \begin{cases}
q^{-j} v_{-i} + Q v_i, & i \neq 0, \\
(q^{-2 j} Q^{-1} + Q) v_i, & i = 0, \\
\end{cases}
\quad 
\text{and} 
\quad 
w_{i(j)}^- = \begin{cases}
	q^{-j} v_{-i} - Q^{-1} v_i, & i \neq 0, \\
	0, & i = 0. \\
\end{cases}
\endeq
For a nondecreasing tuple $I = (i_1, \dots, i_d) \in ([0,r]\cap \ZZ)^d$, we further define elements $w^+_I$ and $w^-_I$ by 
\eq
w^+_{(i)} = w^+_{i(0)},
\quad
w^-_{(i)} = w^-_{i(0)},
\endeq
and then inductively (on $d$) as below:
\eq 
w^+_I = w^+_{(i_1, \dots, i_{d-1})} \otimes w^+_{i_d(j)}, 
\quad 
w^-_I = w^-_{(i_1, \dots, i_{d-1})} \otimes w^-_{i_d(j)}, 
\quad \text{where} \quad j = \max \{k : i_{d - k} = i_d\} 
\endeq
For arbitrary $J \in ([0,r]\cap \ZZ)^d$, there is a shortest element $g \in \Sigma_d$ such that $g^{-1} J$ is nondecreasing and set
\eq 
w^+_J =  w^+_{g^{-1} J} T_g,
\quad 
w^-_J = w^-_{g^{-1} J}  T_g
\endeq

\begin{lemma} \label{lem:Hyperaminoacidemia}
\begin{enumerate}[(a)]
\item For $I \in ([0, r]\cap \ZZ)^d$, $v_I u^+_{d}= w^+_I$.
\item For $I \in ([1, r]\cap \ZZ)^d$, $v_I u^-_{d}= w^-_I$.
\end{enumerate}
\end{lemma}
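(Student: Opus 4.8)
The plan is to prove both parts simultaneously by induction on $d$, tracking the action of $T_\ell \ldots T_1 T_0 T_1 \ldots T_\ell + Q$ (resp. $-Q^{-1}$) on tensors. First I would unwind the recursive factorization: writing $u^+_d = u^+_{d-1}\cdot(T_{d-1}\ldots T_1 T_0 T_1\ldots T_{d-1} + Q)$ and using that $u^+_{d-1}$ acts only on the first $d-1$ tensor slots (since it is built from $T_0,\ldots,T_{d-2}$), I can reduce to understanding (i) how $u^+_{d-1}$ on $V^{\otimes(d-1)}$ produces $w^+_{(i_1,\ldots,i_{d-1})}$ by the inductive hypothesis, and (ii) how the single operator $\beta_d := T_{d-1}\ldots T_1 T_0 T_1\ldots T_{d-1} + Q$ acts on an element of the form $w^+_{(i_1,\ldots,i_{d-1})}\otimes v_{i_d}$. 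The operator $\beta_d$ is, up to conjugation by the braid element, the $T_0$-action moved into the $d$th slot; so computing $v_\mu\cdot\beta_d$ amounts to: conjugate $v_{i_d}$ in the last slot past the others to the front, apply $T_0$ (which sends $v_k\mapsto$ a combination of $v_k$ and $v_{-k}$ as in \eqref{eq:vTi}), add $Q v_\mu$, and conjugate back.

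The key computational step is therefore a one-slot identity: show that $\left(w^+_{(i_1,\ldots,i_{d-1})}\otimes v_{i_d}\right)\beta_d = w^+_{(i_1,\ldots,i_{d-1})}\otimes w^+_{i_d(j)}$ where $j=\max\{k:i_{d-k}=i_d\}$ is exactly the number of earlier indices equal to $i_d$. Here the $q^{-j}$ factor should emerge because when $v_{i_d}$ is moved leftward past the $j$ preceding equal entries $v_{i_d}$, each transposition contributes a factor $q^{-1}$ by the middle case of \eqref{eq:vTi} ($\mu_t=\mu_{t+1}$); past strictly larger or smaller entries the braid moves are clean or contribute correction terms that must be shown to cancel against the $+Q$ summand. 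The special case $i_d=0$ (only when $n=2r+1$) needs the $Q^{-1}$ from the $T_0$-action on $v_0$, producing the scalar $q^{-2j}Q^{-1}+Q$; here both the leftward and rightward passes past equal zeros contribute $q^{-1}$'s, hence the exponent $2j$. For part (b) the computation is parallel with $+Q$ replaced by $-Q^{-1}$, and the $i=0$ case collapses to $0$ because $T_0$ acts on $v_0$ by the scalar $Q^{-1}$ and $Q^{-1}-Q^{-1}\cdot(\text{something})$... more precisely $v_0 T_0 = Q^{-1}v_0$ so $v_0(\beta_d - \text{shift}) $ involves $Q^{-1} - Q^{-1}=0$ after the right normalization — this is why $w^-_{0(j)}=0$ and why part (b) is only asserted for $I\in([1,r]\cap\ZZ)^d$.

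The main obstacle I anticipate is bookkeeping the correction terms: when $v_{i_d}$ is passed past a strictly larger entry, the relation $v_\mu T_t = v_{\mu\cdot s_t} + (q^{-1}-q)v_\mu$ generates an extra term $v_\mu$, and these must be shown to telescope or to be exactly absorbed by the $+Q v_\mu$ (resp. $-Q^{-1}v_\mu$) summand in $\beta_d$. Organizing this cleanly probably requires first handling the case where the tuple $(i_1,\ldots,i_d)$ is nondecreasing (so that no strictly-larger entries lie to the left of $v_{i_d}$, and only the $q^{-1}$ factors from equal entries appear), establishing the formula there, and then deducing the general case from the definition $w^+_J = w^+_{g^{-1}J}T_g$ together with the fact that $u^+_d$ is central in $\cH^\B_{Q,q}(d)$ (Lemma~\ref{lem:Balladical}(a)), so that $v_J u^+_d = v_{g^{-1}J}T_g u^+_d = v_{g^{-1}J}u^+_d T_g = w^+_{g^{-1}J}T_g = w^+_J$. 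That centrality reduction is what makes the general (non-sorted) case essentially free once the nondecreasing case is done, so I would put the bulk of the effort into the sorted case and the single-slot identity above.
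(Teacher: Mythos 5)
Your proposal is correct and follows essentially the same route as the paper: the paper handles nondecreasing $I$ by the ``direct computation'' you organize as an induction on $d$ via the factorization $u^{\pm}_d = u^{\pm}_{d-1}\bigl(T_{d-1}\cdots T_1T_0T_1\cdots T_{d-1} \pm Q^{\mp 1}\bigr)$, and then reduces the general case to the sorted one by exactly the centrality chain $v_I u^\pm_d = v_{Ig^{-1}}T_g u^\pm_d = v_{Ig^{-1}}u^\pm_d T_g = w^\pm_{Ig^{-1}}T_g = w^\pm_I$ that you wrote. Your identification of where the $q^{-j}$, the $q^{-2j}Q^{-1}+Q$ scalar at $i=0$, and the vanishing $w^-_{0(j)}$ come from matches the computation the paper leaves implicit.
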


\begin{proof}
For non-decreasing $I$, the result follows from a direct computation. For general $I$, there exists a shortest element $g \in \Sigma_d$ such that $I g^{-1}$ is non-decreasing. Then, by Lemma~\ref{lem:Balladical}(a),  
\[
	v_I u^\pm_d = v_{I g^{-1}} T_g u^\pm_d = v_{I g^{-1}} u^\pm_d T_g = w^\pm_{I g^{-1}} T_g = w^\pm_I. \qedhere
\]
\end{proof}

\exa
Let $d = 7$ and let $I = (0,1,1,2,3,3,3)$. We have
\[
w^+_I = w^+_{0(0)} \otimes w^+_{1(0)} \otimes w^+_{1(1)} \otimes w^+_{2(0)} \otimes w^+_{3(0)} \otimes w^+_{3(1)} \otimes w^+_{3(2)}.
\]
For $J := (0,2,1,1,3,3,3) = I s_3 s_2$, 
\[
w^+_J =  w^+_I T_3 T_2.
\] 
\endexa 

\exa 
In the following we verify Lemma~\ref{lem:Hyperaminoacidemia} for small $d$'s.
Let $d=2, I = (1,1)$ and hence $w_I = w^+_{1(0)} \otimes w^+_{1(1)}$.
Since
$u^+_2 = (T_1 T_0 T_1 + Q) (T_0 + Q)$, we can check that indeed
\[
v_I u^+_2 = (v_1 \otimes v_1) (T_1 T_0 T_1 + Q) (T_0 + Q) 
=  (v_1 \otimes w^+_{1(1)}) (T_0 + Q) 
=  w^+_I.
\]
\endexa

Now we define $K$-vector spaces 
\eq 
W_{\geq 0}^d = V^{\otimes d} u^+_d, \quad W_{\lt 0}^d = V^{\otimes d} u^-_d. 
\endeq
By Lemma \ref{lem:Balladical}(a), $u^+_d$ and $u^-_d$ are in the center of $\cH_{Q, q}^\B(d)$, hence $W_{\geq 0}^d$ and $W_{\lt 0}^d$ are naturally $\cH_{Q, q}^\B(d)$-module via right multiplication. Moreover, $w T_0 = Q^{-1} w$ for all $w \in W_{\geq 0}^d$ and $w T_0 = - Q w$ for all $w \in W_{\lt 0}^d$.

\begin{lemma} \label{lem:Castoreum}
We have $W_{\geq 0}^d = V_{\geq 0}^{\otimes d} u_d^+$ and $W_{\lt 0}^d = V_{\gt 0}^{\otimes d} u_d^-$.
\end{lemma}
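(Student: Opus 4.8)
The plan is to prove the two equalities
$W_{\geq 0}^d = V_{\geq 0}^{\otimes d} u_d^+$ and $W_{\lt 0}^d = V_{\gt 0}^{\otimes d} u_d^-$ separately, in each case by exhibiting the right-hand side as a span of the appropriate $w^\pm$-vectors from Lemma~\ref{lem:Hyperaminoacidemia} and then checking that this span already exhausts $V^{\otimes d} u_d^\pm$. Since $V^{\otimes d}$ has $K$-basis $\{v_\mu : \mu \in I(n)^d\}$ and $u_d^\pm$ acts on the right, $W_{\geq 0}^d$ is spanned by $\{v_\mu u_d^+ : \mu \in I(n)^d\}$, and similarly for $W_{\lt 0}^d$; so the containments $\supseteq$ are immediate and the content is in the reverse containments.

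First I would treat $W_{\lt 0}^d$, which is the cleaner case. By Lemma~\ref{lem:Balladical}(a) the element $u_d^-$ is central, and by construction $w T_0 = -Qw$ for all $w \in W_{\lt 0}^d$; more generally, expanding the product $u_d^- = \prod_{\ell=0}^{d-1}(T_\ell\cdots T_1T_0T_1\cdots T_\ell - Q^{-1})$ one sees that each factor $T_\ell\cdots T_0\cdots T_\ell - Q^{-1}$ annihilates any tensor whose $(\ell+1)$st slot is a $v_i$ with $i \geq 0$ after the preceding factors have been applied, so that $v_\mu u_d^- = 0$ whenever some entry of $\mu$ is $\geq 0$ — this is where the $i=0$ case $w^-_{0(j)}=0$ comes from. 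Concretely, using Lemma~\ref{lem:Hyperaminoacidemia}(b) and the centrality of $u_d^-$ to push $T_0$'s and permutations around, one reduces any $v_\mu u_d^-$ to $\pm$(a power of $q$) times some $w^-_I$ with $I \in ([1,r]\cap\ZZ)^d$, plus lower terms handled by induction; and each such $w^-_I$ lies in $V_{\gt 0}^{\otimes d} u_d^-$ by Lemma~\ref{lem:Hyperaminoacidemia}(b) applied in the reverse direction. Hence $W_{\lt 0}^d \subseteq V_{\gt 0}^{\otimes d} u_d^-$.

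For $W_{\geq 0}^d$ the same strategy applies with $u_d^+$: every factor $T_\ell\cdots T_0\cdots T_\ell + Q$ sends $v_i$ in the relevant slot to a combination of $v_{-i}$ and $v_i$, so iterating shows $v_\mu u_d^+$ is a $K$-combination of vectors $v_\nu u_d^+$ with all entries of $\nu$ of the form $|i|$, i.e.\ in $[0,r]\cap\ZZ$; by Lemma~\ref{lem:Hyperaminoacidemia}(a) these are exactly the $w^+_I$, which lie in $V_{\geq 0}^{\otimes d} u_d^+$. One must be slightly careful about the $i=0$ slots when $n=2r+1$ (the coefficient $q^{-2j}Q^{-1}+Q$ in $w^+_{0(j)}$), but since $v_0 \in V_{\geq 0}$ this causes no trouble; when $n=2r$ there are no zero indices and $V_{\geq 0}=V_{\gt 0}$. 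Thus $W_{\geq 0}^d \subseteq V_{\geq 0}^{\otimes d} u_d^+$, completing the proof.

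\textbf{Main obstacle.} The delicate point is the bookkeeping in the reduction step: showing that applying a single factor $T_\ell\cdots T_1T_0T_1\cdots T_\ell \pm Q^{\mp 1}$ to a tensor with mixed signs in its slots produces only tensors whose entries have been ``folded'' to nonnegative values, with the lower-order correction terms (coming from the $(q^{-1}-q)$ and $(Q^{-1}-Q)$ pieces of the Hecke action in \eqref{eq:vTi}) staying inside the span one is inducting on. Rather than a brute-force expansion, I expect the efficient route is to induct on $d$ using the explicit inductive formula defining $w^\pm_I$ together with $u_d^\pm = (T_{d-1}\cdots T_1T_0T_1\cdots T_{d-1} \pm Q^{\mp1})\, u_{d-1}^\pm$ and the fact (Lemma~\ref{lem:Balladical}(a), applied at level $d-1$) that $u_{d-1}^\pm$ already lands in the span of $w^\pm$-vectors supported on the first $d-1$ slots — so only the behaviour of the last ``long braid'' factor on the final slot needs to be analyzed directly, which is the computation already illustrated in the preceding examples.
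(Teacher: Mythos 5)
Your strategy is essentially the paper's: the proof there also rests on the centrality of $u_d^\pm$ (Lemma~\ref{lem:Balladical}(a)) together with the eigenvalue relation $T_0u_d^+=Q^{-1}u_d^+$ (and its analogue $T_0u_d^-=-Qu_d^-$), used to fold a negative entry in slot $i$ into a positive one. The paper organizes this as an induction on $i$: it writes $V_{\geq 0}^{\otimes(i-1)}\otimes V_{<0}\otimes V^{\otimes(d-i)}=(V_{>0}\otimes V_{\geq 0}^{\otimes(i-1)}\otimes V^{\otimes(d-i)})\,T_0T_1\cdots T_{i-1}$, absorbs the $T_0$ into $u_d^+$ at the cost of $Q^{-1}$, and keeps the leftover $T_1\cdots T_{i-1}$ inside $V_{\geq 0}^{\otimes i}\otimes V^{\otimes(d-i)}$ because that space is an $\cH_q(\Sigma_i)$-module. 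This is cleaner than expanding $u_d^\pm$ factor by factor, since the relevant identities are between vectors of the form $v_\nu u_d^\pm$ rather than between the basis vectors $v_\nu$ themselves; your own ``main obstacle'' paragraph essentially acknowledges this.

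One intermediate claim in your treatment of $W_{<0}^d$ is false as written: ``$v_\mu u_d^-=0$ whenever some entry of $\mu$ is $\geq 0$.'' Taken literally this annihilates every $v_I u_d^-$ with $I\in([1,r]\cap\ZZ)^d$ and forces $W_{<0}^d=0$, contradicting Lemma~\ref{lem:Hyperaminoacidemia}(b) and Lemma~\ref{lem:Morphinomania}(b). What the argument actually needs (and what your parenthetical about $w^-_{0(j)}=0$ suggests you intend) is that a slot equal to $0$ kills the vector: combining $(v_0\otimes x)T_0=Q^{-1}(v_0\otimes x)$ with $T_0u_d^-=-Qu_d^-$ and centrality gives $(Q+Q^{-1})(v_0\otimes x)u_d^-=0$, so such vectors vanish. (Note this step silently uses $Q+Q^{-1}\neq 0$, which is supplied by the $i=0$ factor of the invertibility hypothesis on $f^\B_d$, and is only relevant when $n$ is odd.) With ``$\geq 0$'' corrected to ``$=0$'' and the folding carried out via the absorbed-$T_0$ identities as above, your proof goes through and coincides with the paper's.
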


\begin{proof}
We only give a proof for the first claim, and a proof for the second claim can be obtained similarly. 
A direct computation shows that
\eq\label{eq:T0ud+}
T_0 u_d^+ = Q^{-1} u_d^+.
\endeq
For $1 \leq i \leq d$,
\begin{align*}
&(V_{\geq 0}^{\otimes (i - 1)} \otimes V_{\lt 0} \otimes V^{\otimes (d - i)}) u_d^+
& 
\\
&=(V_{\gt 0} \otimes V_{\geq 0}^{\otimes (i - 1)} \otimes V^{\otimes (d - i)}) T_0 T_1 T_2 \dots T_{i - 1} u_d^+
&
\\
&=  (V_{\gt 0} \otimes V_{\geq 0}^{\otimes (i - 1)} \otimes V^{\otimes (d - i)}) Q^{-1} T_1 T_2 \dots T_{i - 1} u_d^+ 
& \text{Lemma \ref{lem:Balladical}(a) and}\eqref{eq:T0ud+} 
\\
&\subseteq  V_{\geq 0}^{\otimes i} \otimes V^{\otimes (d - i)} u_d^+. 
&V_{\geq 0}^{\otimes i}\text{ is a $\cH_q(\Sigma_i)$-module}
\end{align*}
Next, an induction proves that for $0 \leq i \leq d$,
\eq
 V^{\otimes i} \otimes V^{\otimes (d - i)} = V_{\geq 0}^{\otimes i} \otimes V^{\otimes (d - i)}, 
\endeq
from which the result follows.
\end{proof}

\begin{lemma} \label{lem:Bountifulness}
Let $p_d : V^{\otimes d} \to V_{\leq 0}^{\otimes d}$ be the projection map. For $I \in ([0,r]\cap \ZZ)^d$ and $J \in ([1,r]\cap \ZZ)^d$, $p_d(w^+_I) = c_I v_{-I}$ and $p_d(w^-_J) = c_J v_{-J}$ for some invertible elements $c_I, c_J \in K^\times$.
\end{lemma}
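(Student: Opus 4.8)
The plan is to compute the projection $p_d$ applied to the basis vectors $w^\pm_I$ directly, exploiting the inductive tensor-factor structure in their definition. The key observation is that for each intermediate element $w^+_{i(j)}$ with $i\neq 0$, we have $p_1(w^+_{i(j)}) = p_1(q^{-j}v_{-i} + Qv_i) = q^{-j}v_{-i}$, since $v_{-i}\in V_{\leq 0}$ and $v_i\notin V_{\leq 0}$ for $i>0$; when $i=0$ (only relevant in the odd case $n=2r+1$), $w^+_{0(j)} = (q^{-2j}Q^{-1}+Q)v_0$ already lies in $V_{\leq 0}$, so $p_1$ fixes it and returns the nonzero scalar multiple $(q^{-2j}Q^{-1}+Q)v_0$ — here one uses that $Q^{-2}+q^{2j}$ is invertible, which is guaranteed by the hypothesis that $f^\B_d(Q,q)$ is invertible (note $-r+1\le -j \le \dots$ fits the range of exponents in $f^\B_d$). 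Similarly $p_1(w^-_{i(j)}) = q^{-j}v_{-i}$ for $i\neq 0$, and the case $i=0$ does not arise since $J\in([1,r]\cap\ZZ)^d$.

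First I would treat the case where $I$ is nondecreasing. Since $V_{\leq 0}^{\otimes d}$ is the tensor product of the $V_{\leq 0}$ factors and $p_d = p_1^{\otimes d}$ on the $V^{\otimes d} = V^{\otimes 1}\otimes\cdots\otimes V^{\otimes 1}$ decomposition, applying $p_d$ to $w^+_I = w^+_{i_1(j_1)}\otimes\cdots\otimes w^+_{i_d(j_d)}$ (with the $j_k$ determined by the recursion) factors as $\bigotimes_k p_1(w^+_{i_k(j_k)})$. By the observation above this equals $\bigl(\prod_k \gamma_{i_k(j_k)}\bigr) v_{-i_1}\otimes\cdots\otimes v_{-i_d} = c_I v_{-I}$, where each $\gamma_{i_k(j_k)}$ is either $q^{-j_k}$ (when $i_k\neq 0$) or $q^{-2j_k}Q^{-1}+Q$ (when $i_k = 0$), all invertible in $K$ under the hypothesis; hence $c_I\in K^\times$. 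The same computation with $Q$ replaced by $-Q^{-1}$ and no $i=0$ terms gives $p_d(w^-_J) = c_J v_{-J}$ with $c_J = \prod_k q^{-j_k}\in K^\times$ for nondecreasing $J$.

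For general $I$, I would write $I = g^{-1}I'$... more precisely, following the definitions, $w^+_I = w^+_{g^{-1}I}T_g$ where $g$ is the shortest element with $g^{-1}I$ nondecreasing (and here $I\mapsto Ig^{-1}$ in the notation of Lemma~\ref{lem:Hyperaminoacidemia}'s proof — I will be careful to match the paper's convention). The point is that $T_g$ permutes tensor factors, possibly introducing lower-order correction terms via the $(q^{-1}-q)$ in \eqref{eq:vTi}; however, since $w^+_{g^{-1}I}$ lies in $(V_{\leq 0}\oplus V_{>0})^{\otimes d}$ and $p_d$ kills every $v_i$ with $i>0$, I need to track how the $\cH$-action interacts with $p_d$. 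The cleanest route is to observe that $p_d$ intertwines the right $\cH^\A_q(\Sigma_d)$-action on $V^{\otimes d}$ with the $\cH^\A_q(\Sigma_d)$-action on $V_{\leq 0}^{\otimes d}$ up to the fact that $V_{\leq 0}^{\otimes d}$ is a submodule — indeed $p_d$ is $\cH^\A_q(\Sigma_d)$-equivariant because $V_{>0}^{\otimes d}$-type complements are not $\cH$-stable, so instead I would argue directly: $p_d(w^+_{g^{-1}I} T_g)$; expand $w^+_{g^{-1}I}$ as $c_{g^{-1}I}v_{-g^{-1}I}$ plus terms in which at least one tensor slot carries a $v_i$ with $i>0$. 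The vector $v_{-g^{-1}I}$ has all entries $\le 0$, and acting by $T_g$ on such a vector, by \eqref{eq:vTi}, produces $v_{(-g^{-1}I)\cdot(\text{perm})}$ plus lower terms, all still with entries $\le 0$, hence unaffected by $p_d$ other than reordering; meanwhile every "bad" term, carrying some $v_i$ with $i>0$, stays bad under the $T_g$-action (the $(q^{-1}-q)$ corrections only repeat existing entries), so $p_d$ annihilates it. This yields $p_d(w^+_I) = c_{g^{-1}I}\, v_{-g^{-1}I}T_g$; and separately $v_{-I} = v_{-g^{-1}I}T_g$ by the definition of the $w$-basis reindexing applied to the "$+$"-free situation (or by a direct check that $T_g$ implements the same permutation on the $v$-basis up to invertible scalars), giving $p_d(w^+_I) = c_I v_{-I}$ with $c_I\in K^\times$; the argument for $w^-_J$ is identical.

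The main obstacle I anticipate is the bookkeeping in the last step: matching the permutation action of $T_g$ on $w^+_{g^{-1}I}$ against the permutation action on the plain basis vector $v_{-I}$, and confirming that the $(q^{-1}-q)$-correction terms in \eqref{eq:vTi} never move a tensor slot out of $V_{\leq 0}$ nor create cancellation that would kill the leading coefficient. A clean way to sidestep most of this is to prove the equivariance statement $p_d(xT_t) = p_d(x)T_t$ for all $x\in V^{\otimes d}$ and all $t=1,\dots,d-1$ — which follows because the decomposition $V = V_{\leq 0}\oplus V_{>0}$ is $\cH^\A_q(\Sigma_d)$-stable in each tensor slot only after one checks $v_i T_t$ for $i>0$ stays in the span of $v_j$ with $j>0$ (true, since $T_t$ only permutes and repeats indices) and likewise for $v_i$ with $i\le 0$ — so that $p_d$ is a morphism of $\cH^\A_q(\Sigma_d)$-modules and the general case reduces immediately to the nondecreasing case already handled.
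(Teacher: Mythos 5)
Your treatment of the nondecreasing case is correct and in fact more informative than the paper's one-line ``direct computation'': the factorization $p_d = p_1^{\otimes d}$ applied to the pure tensor $w^+_I = \bigotimes_k w^+_{i_k(j_k)}$ does give $c_I = \prod_k \gamma_{i_k(j_k)}$, and your observation that invertibility of the $i_k=0$ factors $q^{-2j}Q^{-1}+Q = Qq^{-2j}(Q^{-2}+q^{2j})$ is exactly what the standing hypothesis on $f^\B_d$ buys is a genuine point the paper glosses over. Your equivariance claim is also right: for $t\ge 1$ the operator $T_t$ preserves the multiset of indices of $v_\mu$, so both $V_{\le 0}^{\otimes d}$ and the span of the $v_\mu$ with some positive entry are $T_t$-stable, hence $p_d$ commutes with the right $\cH_q^\A(\Sigma_d)$-action and $p_d(w^+_I) = c_{g^{-1}I}\, v_{-g^{-1}I}T_g$.

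The gap is in the very last step, precisely the one you flagged as ``the main obstacle'' and then asserted away: $v_{-g^{-1}I}T_g$ is \emph{not} an invertible multiple of $v_{-I}$. Since $g^{-1}I$ is nondecreasing, $-g^{-1}I$ is non\emph{increasing}, so every elementary swap in a reduced word for $g$ lands in the third case of \eqref{eq:vTi} and produces a $(q^{-1}-q)$-correction; these correction terms are indexed by rearrangements of $-I$ whose entries are still all $\le 0$, so $p_d$ does not kill them. Concretely, take $d=2$ and $I=(2,1)$: then $w^+_{(2,1)} = \bigl((v_{-1}+Qv_1)\otimes(v_{-2}+Qv_2)\bigr)T_1$ and $p_2(w^+_{(2,1)}) = v_{(-2,-1)} + (q^{-1}-q)v_{(-1,-2)}$, which is not a scalar multiple of $v_{-I}=v_{(-2,-1)}$. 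So your argument (and, for what it is worth, the paper's equally terse ``induction on the length of $g$'') only establishes the triangular statement $p_d(w^\pm_I) = c_I v_{-I} + \sum_{I'} c_{I'}v_{-I'}$ with $c_I\in K^\times$ and $I'$ ranging over proper rearrangements of $I$; the literal statement of the lemma fails for non-monotone $I$. That triangular version is all the downstream applications actually use (injectivity in Lemmas~\ref{lem:Morphinomania} and \ref{lem:Chirivita}, leading-term extraction in Lemmas~\ref{lem:Instar} and \ref{lem:Telephote}), so the correct repair is to weaken the conclusion rather than to try to make the correction terms disappear.
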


\begin{proof}
When $I, J$ are non-decreasing, and when $d = 2$, the result follows from a direct computation. For general $I$ (or $J$), there exists a shortest element $g \in \Sigma_d$ such that $I g^{-1}$ (or $J g^{-1}$) is non-decreasing. The result follows from an induction on the length of $g$.
\end{proof}

\begin{lemma} \label{lem:Morphinomania}
\begin{enumerate}[(a)]
\item The map $v_I \mapsto w^+_I$ gives an isomorphism of $\cH_q(\Sigma_d)$-modules $V_{\geq 0}^{\otimes d} \to W_{\geq 0}^d$.
\item The map $v_I \mapsto w^-_I$ gives an isomorphism of $\cH_q(\Sigma_d)$-modules $V_{\lt 0}^{\otimes d} \to W_{\lt 0}^d$.
\end{enumerate}
\end{lemma}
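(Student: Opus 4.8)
The plan is to recognise the map in part~(a) as right multiplication by the central element $u^+_d$, and then to read off its three defining properties---that it is the stated linear map, that it is surjective onto $W^d_{\geq 0}$, and that it is injective---from Lemmas~\ref{lem:Hyperaminoacidemia}, \ref{lem:Castoreum} and \ref{lem:Bountifulness} respectively.

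First I would introduce $\rho^+\colon V_{\geq 0}^{\otimes d}\to V^{\otimes d}$, $x\mapsto xu^+_d$. On the standard basis $\{v_I : I\in([0,r]\cap\ZZ)^d\}$ of $V_{\geq 0}^{\otimes d}$ (with $[0,r]$ replaced by $[1,r]$ when $n=2r$), Lemma~\ref{lem:Hyperaminoacidemia}(a) gives $\rho^+(v_I)=w^+_I$, so $\rho^+$ is exactly the linear map $v_I\mapsto w^+_I$ of the statement. Since $u^+_d$ lies in the centre of $\cH^\B_{Q,q}(d)$ by Lemma~\ref{lem:Balladical}(a), it commutes with the subalgebra $\cH_q(\Sigma_d)=\langle T_1,\dots,T_{d-1}\rangle$; hence $\rho^+(xh)=xhu^+_d=xu^+_dh=\rho^+(x)h$ for all $h\in\cH_q(\Sigma_d)$, so $\rho^+$ is $\cH_q(\Sigma_d)$-linear. (Here $V_{\geq 0}^{\otimes d}$ is an $\cH_q(\Sigma_d)$-submodule of $V^{\otimes d}$, since the $T_i$ with $i\geq 1$ only permute tensor factors and add lower-order terms; and $W^d_{\geq 0}=V^{\otimes d}u^+_d$ is an $\cH_q(\Sigma_d)$-submodule because $u^+_d$ is central.)

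Surjectivity onto $W^d_{\geq 0}$ is then immediate from Lemma~\ref{lem:Castoreum}: the image of $\rho^+$ equals $V_{\geq 0}^{\otimes d}u^+_d=W^d_{\geq 0}$. For injectivity I would postcompose with the projection $p_d\colon V^{\otimes d}\to V_{\leq 0}^{\otimes d}$. By Lemma~\ref{lem:Bountifulness}, $p_d(w^+_I)=c_Iv_{-I}$ with $c_I\in K^\times$; as $I$ runs over $([0,r]\cap\ZZ)^d$ the tuples $-I$ are pairwise distinct and lie in $([-r,0]\cap\ZZ)^d$, so the $v_{-I}$ are distinct standard basis vectors of $V_{\leq 0}^{\otimes d}$. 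Hence $\{p_d(w^+_I)\}_I$ is linearly independent, which forces $\{w^+_I\}_I$ to be linearly independent and $\rho^+$ to be injective. Combining the three properties, $\rho^+$ is an isomorphism of $\cH_q(\Sigma_d)$-modules, which is~(a).

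Part~(b) runs identically with $u^-_d$ in place of $u^+_d$: it is central by Lemma~\ref{lem:Balladical}(a); Lemma~\ref{lem:Hyperaminoacidemia}(b) gives $v_Iu^-_d=w^-_I$ for $I\in([1,r]\cap\ZZ)^d$, identifying the map; Lemma~\ref{lem:Castoreum} gives $V_{\gt 0}^{\otimes d}u^-_d=W^d_{\lt 0}$, hence surjectivity; and Lemma~\ref{lem:Bountifulness} gives $p_d(w^-_J)=c_Jv_{-J}$ with $c_J\in K^\times$, hence injectivity by the same distinctness argument. (The natural domain of this map is $V_{\gt 0}^{\otimes d}$, which one identifies with $V_{\lt 0}^{\otimes d}$ by relabelling $v_i\leftrightarrow v_{-i}$.) The only genuinely substantive ingredient is the injectivity step, and the work there has already been done in Lemma~\ref{lem:Bountifulness}, where $p_d$ is shown to pick out a nonzero ``leading term'' on each $w^\pm$-vector; so I do not anticipate a real obstacle, as the lemma is in essence the assembly of the four preceding ones.
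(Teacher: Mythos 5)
Your proof is correct and follows exactly the same route as the paper's: identify the map as right multiplication by the central element $u^\pm_d$ via Lemma~\ref{lem:Hyperaminoacidemia}, deduce $\cH_q(\Sigma_d)$-equivariance from centrality (Lemma~\ref{lem:Balladical}(a)), get surjectivity from Lemma~\ref{lem:Castoreum}, and get injectivity from the projection argument of Lemma~\ref{lem:Bountifulness}. You spell out the details (in particular the linear-independence step behind injectivity and the domain identification in part~(b)) more explicitly than the paper does, but there is no substantive difference.
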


\begin{proof}
Since $u^+_d$ (resp. $u^-_d$) is in the center of $\cH_{Q, q}^\B(d)$ by Lemma \ref{lem:Balladical}(a), the map $v_I \mapsto w^+_I$ (resp. $v_I \mapsto w^-_I$) is clearly $\cH_q(\Sigma_d)$-equivariant. Surjectivity of this map follows from Lemma \ref{lem:Castoreum}, and injectivity of this map follows from Lemma \ref{lem:Bountifulness}.
\end{proof}

\subsection{The actions of $v_{a, b}$}

\begin{lemma} \label{lem:Paddockstone}
For $a + b = d$, $V^{\otimes d} v_{a, b} = (V_{\gt 0}^{\otimes b} \otimes V_{\geq 0}^{\otimes a}) v_{a, b}$.
\end{lemma}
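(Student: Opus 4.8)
The plan is to reduce the identity $V^{\otimes d} v_{a,b} = (V_{>0}^{\otimes b}\otimes V_{\geq 0}^{\otimes a})v_{a,b}$ to the action statements already established for $u_d^\pm$, exploiting the factorization $v_{a,b} = u_b^- T_{w_{a,b}} u_a^+$ from \eqref{eq:Reswarm}. The key structural observation is that $u_b^-$ involves only the generators $T_0,\dots,T_{b-1}$ (it is a product of terms $T_\ell\cdots T_1 T_0 T_1\cdots T_\ell - Q^{-1}$ for $\ell \leq b-1$), so it acts only on the first $b$ tensor factors; similarly $u_a^+$ can be viewed, after conjugation by $T_{w_{a,b}}$, as acting on the last $a$ factors. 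More precisely, $w_{a,b}$ is the permutation that moves the block $\{1,\dots,a\}$ past the block $\{a+1,\dots,a+b\}$, so $T_{w_{a,b}} u_a^+ T_{w_{a,b}}^{-1}$ is the "shifted" copy of $u_a^+$ acting on positions $b+1,\dots,b+a$. Thus $v_{a,b}$ essentially applies $u_b^-$ on the first $b$ slots and a shifted $u_a^+$ on the last $a$ slots, with the permutation $T_{w_{a,b}}$ in between.

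The steps I would carry out, in order: (1) First reduce to showing $V^{\otimes d}v_{a,b} \subseteq (V_{>0}^{\otimes b}\otimes V_{\geq 0}^{\otimes a})v_{a,b}$, since the reverse inclusion is trivial. (2) Write an arbitrary $v_\mu \in V^{\otimes d}$ and track the action through $v_{a,b} = u_b^- T_{w_{a,b}} u_a^+$. Using Lemma~\ref{lem:Balladical}(a) (centrality of $u_d^\pm$) together with the combinatorics of reduced expressions, commute the permutation $T_{w_{a,b}}$ appropriately so that $u_b^-$ is seen to act on the first $b$ factors and (the shifted) $u_a^+$ on the last $a$. (3) Now apply Lemma~\ref{lem:Castoreum}: there it is shown that $V^{\otimes b}u_b^- = V_{>0}^{\otimes b}u_b^-$ and $V^{\otimes a}u_a^+ = V_{\geq 0}^{\otimes a}u_a^+$. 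Feeding these in, every element of $V^{\otimes d}v_{a,b}$ is a sum of terms lying in $(V_{>0}^{\otimes b}\otimes V_{\geq 0}^{\otimes a})v_{a,b}$, as desired. A subtle point to be careful about in step (2): $u_b^-$ and the shifted $u_a^+$ act on \emph{disjoint} sets of tensor factors only after the permutation is accounted for, and one should use that $u_b^-$, being built from $T_0,\dots,T_{b-1}$, genuinely fixes the last $a$ slots; so the two "local" reductions of Lemma~\ref{lem:Castoreum} can be applied independently and then reassembled.

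The main obstacle I anticipate is bookkeeping the interaction of $T_{w_{a,b}}$ with $u_a^+$ cleanly — specifically, verifying that $T_{w_{a,b}} u_a^+$ can be rewritten as $(\text{shifted }u_a^+)\,T_{w_{a,b}}$ up to elements that do not spoil the inclusion, which amounts to a length/reduced-word computation in $W^\B(d)$. An alternative, possibly cleaner, route that avoids this conjugation entirely: argue directly that for any $v_\mu$, repeatedly using $v_\nu T_0 = Q^{-1}v_\nu$ on $W_{\geq 0}$-type elements and the "pushing into $V_{\geq 0}^{\otimes i}$" trick from the proof of Lemma~\ref{lem:Castoreum}, one can rewrite $v_\mu u_a^+$ (after the permutation) as lying in $V_{\geq 0}^{\otimes a}$ in the last $a$ slots, while the $u_b^-$ factor analogously forces the first $b$ slots into $V_{>0}^{\otimes b}$ (note $w_{i(0)}^- = 0$ for $i=0$, which is exactly why one gets $V_{>0}$ and not $V_{\geq 0}$ on that side). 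Either way, the essential input is Lemma~\ref{lem:Castoreum}, and the proof should be short once the factor $T_{w_{a,b}}$ is handled.
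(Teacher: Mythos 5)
The first half of your argument is correct and is exactly what the paper does: since $u_b^-$ is built from $T_0,\dots,T_{b-1}$ it only touches the first $b$ tensor factors, so Lemma~\ref{lem:Castoreum} applied in those slots gives $(V^{\otimes b}\otimes V^{\otimes a})u_b^- = (V_{\gt 0}^{\otimes b}\otimes V^{\otimes a})u_b^-$, hence $V^{\otimes d}v_{a,b}=(V_{\gt 0}^{\otimes b}\otimes V^{\otimes a})v_{a,b}$. The gap is in the second half. Your key structural claim --- that $T_{w_{a,b}}u_a^+T_{w_{a,b}}^{-1}$ is a shifted copy of $u_a^+$ acting on slots $b+1,\dots,d$, so that Lemma~\ref{lem:Castoreum} can be applied there ``independently and reassembled'' --- does not hold in the Hecke algebra. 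The building blocks $L_m=T_{m-1}\cdots T_1T_0T_1\cdots T_{m-1}$ of $u_a^+$ propagate by \emph{twisted} conjugation, $L_{m+1}=T_mL_mT_m$ rather than $T_mL_mT_m^{-1}$, and the length-additivity needed to commute $T_{w_{a,b}}$ past these elements fails (the reflection at position $\ell+1$ and the one at position $b+\ell+1$ have different lengths, so at most one of the two products can be length-additive). You flag this as bookkeeping, but the deeper problem is that no amount of bookkeeping makes the two factors act independently: the absorption of $V_{\lt 0}$ components in the last $a$ slots genuinely requires the interaction between $u_b^-$ and $u_a^+$.

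Concretely, the paper's mechanism is the identity, for $b\lt i\leq d$,
\[
T_0T_1\cdots T_{i-1}\,v_{a,b}\;=\;Q^{-1}\,T_1^{-1}\cdots T_b^{-1}\,(T_{b+1}\cdots T_{i-1})\,v_{a,b},
\]
obtained by writing $T_0T_1\cdots T_{i-1}=T_1^{-1}\cdots T_b^{-1}(T_b\cdots T_1T_0T_1\cdots T_b)(T_{b+1}\cdots T_{i-1})$, commuting $T_{b+1},\dots,T_{i-1}$ past $u_b^-$, and using $(T_b\cdots T_0\cdots T_b)u_b^-=u_{b+1}^-+Q^{-1}u_b^-$ from \eqref{eq:Frigostable} together with Lemma~\ref{lem:Balladical}(b): $u_{b+1}^-\,\cH^\B_{Q,q}(d)\,u_a^+=0$ because $(b+1)+a\gt d$. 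It is precisely this vanishing --- entirely absent from your proposal --- that makes the sign change at any position $\gt b$ act by the scalar $Q^{-1}$ on $V^{\otimes d}v_{a,b}$ and allows the same induction as in Lemma~\ref{lem:Castoreum} to push slots $b+1,\dots,d$ into $V_{\geq 0}$. Your ``alternative route'' gestures at the right induction, but without an analogue of $T_0u_d^+=Q^{-1}u_d^+$ valid for $v_{a,b}$ it cannot be run, and that analogue is exactly what Lemma~\ref{lem:Balladical}(b) supplies. To repair the proof, replace the conjugation step by this computation.
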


\begin{proof}
It follows from Eq. \eqref{eq:Reswarm} and Lemma \ref{lem:Castoreum} that
\eq 
V^{\otimes d} v_{a, b} = (V^{\otimes b} \otimes V^{\otimes a}) u_b^- T_{w_{a, b}} u_a^+ = (V_{\gt 0}^{\otimes b} \otimes V^{\otimes a}) u_b^- T_{w_{a, b}} u_a^+ = (V_{\gt 0}^{\otimes b} \otimes V^{\otimes a}) v_{a, b}. 
\endeq
For $b \lt i \leq d$,
\begin{align*}
& T_0 T_1 T_2 \dots T_{i - 1} v_{a, b} 
\\
&=  T_1^{-1} \dots T_b^{-1} (T_b \dots T_1 T_0 T_1 \dots T_b) (T_{b + 1} \dots T_{i - 1}) u_b^- T_{w_{a, b}} u_a^+ & \text{Eq. \eqref{eq:Reswarm}} 
\\
&=  T_1^{-1} \dots T_b^{-1} (T_b \dots  T_0  \dots T_b) u_b^- (T_{b + 1} \dots T_{i - 1}) T_{w_{a, b}} u_a^+ & \text{$u_b^-$ commutes with $T_{b + 1}, \dots$} 
\\
&=  T_1^{-1} \dots T_b^{-1} (u_{b + 1}^- + Q^{-1} u_b^-) (T_{b + 1} \dots T_{i - 1}) T_{w_{a, b}} u_a^+ & \text{Eq. \eqref{eq:Frigostable}} 
\\
&=  Q^{-1} T_1^{-1} \dots T_b^{-1} u_b^- (T_{b + 1} \dots T_{i - 1}) T_{w_{a, b}} u_a^+ & \text{Lemma \ref{lem:Balladical}} 
\\
&=  Q^{-1} T_1^{-1} \dots T_b^{-1} (T_{b + 1} \dots T_{i - 1}) u_b^- T_{w_{a, b}} u_a^+ & \text{$u_b^-$ commutes with $T_{b + 1}, \dots$} \\
&=  Q^{-1} T_1^{-1} \dots T_b^{-1} (T_{b + 1} \dots T_{i - 1}) v_{a, b}. & \text{Eq. \eqref{eq:Reswarm}}
\end{align*}
Then, for $b \lt i \leq d$,
\eq
\begin{split}
& (V_{\gt 0}^b \otimes V_{\geq 0}^{\otimes (i - b - 1)} \otimes V_{\lt 0} \otimes V^{\otimes (d - i)}) v_{a, b} 
\\
&= (V_{\gt 0} \otimes V_{\gt 0}^{\otimes b} \otimes V_{\geq 0}^{\otimes (i - b - 1)} \otimes V^{\otimes (d - i)}) T_0 T_1 T_2 \dots T_{i - 1} v_{a, b} 
\\
&= Q^{-1} (V_{\gt 0} \otimes V_{\gt 0}^{\otimes b} \otimes V_{\geq 0}^{\otimes (i - b - 1)} \otimes V^{\otimes (d - i)}) T_1^{-1} \dots T_b^{-1} (T_{b + 1} \dots T_{i - 1}) v_{a, b} 
\\
&\subseteq  (V_{\gt 0}^{\otimes b} \otimes V_{\gt 0} \otimes V_{\geq 0}^{\otimes (i - b - 1)} \otimes V^{\otimes (d - i)}) (T_{b + 1} \dots T_{i - 1}) v_{a, b} 
\\
&\subseteq  (V_{\gt 0}^{\otimes b} \otimes V_{\geq 0}^{\otimes (i - b)} \otimes V^{\otimes (d - i)}) v_{a, b},
\end{split}
\endeq
where the last two inclusions follow from that $V_{\gt 0}^{\otimes (b + 1)}$ is a $\cH_q(\Sigma_{b + 1})$-module, and $V_{\geq 0}^{\otimes (i - b)}$ is a $\cH_q(\Sigma_{i - b})$-module, respectively.
An induction shows that for $b \leq i \leq d$,
\[ V^{\otimes b} \otimes V^{\otimes (b - i)} \otimes V^{\otimes (d - i)} v_{a, b} = V_{\gt 0}^{\otimes b} \otimes V_{\geq 0}^{\otimes (i - b)} \otimes V^{\otimes (d - i)} v_{a, b}, \]
from which the result follows.
\end{proof}

For $a + b = d$, define projections
\eq 
p_{a, b} : V^{\otimes d} \to V_{\leq 0}^{\otimes a} \otimes V_{\lt 0}^{\otimes b},
\quad
p'_{a, b} : V^{\otimes d} \to V^{\otimes a} \otimes V_{\lt 0}^{\otimes b}.
 \endeq
\begin{lemma} \label{lem:Instar}
Let $a + b = d$, $I \in ([0, r] \cap \ZZ)^a$ and $J \in ([-r, r]\cap\ZZ)^b$.
Then
\[ p'_{a, b}( (v_J \otimes v_I) T_{w_{a, b}} ) = c_{I, J} v_I \otimes p_b(v_J) \]
for some invertible $c_{I, J} \in K^\times$, where $p_b$ is defined in Lemma \ref{lem:Bountifulness}. Moreover,
\[ p'_{a, b}( (w_J^- \otimes v_I) T_{w_{a, b}} ) = c_{I, J} c_J v_I \otimes v_{-J} \]
for some invertible $c_{I, J}, c_J \in K^\times$.
\end{lemma}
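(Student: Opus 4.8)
The plan is to expand $T_{w_{a, b}}$ as an explicit reduced word that slides the $a$ letters of $v_I$ to the front one at a time, and then to exploit the sign constraint built into $p'_{a, b}$. Concretely, take
\[
T_{w_{a, b}} = \prod_{k = 1}^{a} \bigl(T_{b + k - 1}\, T_{b + k - 2} \cdots T_k\bigr).
\]
This product of $ab$ generators represents $w_{a, b}$, and since $\ell(w_{a, b}) = ab$ it is a reduced expression; applying it to $v_J \otimes v_I$ via \eqref{eq:vTi} (left to right), the $k$-th block $T_{b + k - 1} \cdots T_k$ moves the letter of $v_I$ originally in slot $b + k$ leftward, past each of $J_1, \dots, J_b$, into slot $k$, for $k = 1, \dots, a$ in turn. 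The two features we lean on are that every entry of $I$ lies in $[0, r]$, hence is $\geq 0$, and that $p'_{a, b}$ kills any basis tensor whose last $b$ slots do not all lie in $V_{\lt 0}$.

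For the first identity I would argue by cases on $J$. If some entry of $J$ fails to be strictly negative, then every $v_\nu$ appearing in $(v_J \otimes v_I) T_{w_{a, b}}$ has the same multiset of entries as $(J_1, \dots, J_b, I_1, \dots, I_a)$ — because $w_{a, b}$ involves no $T_0$ and \eqref{eq:vTi} only ever transposes or fixes two adjacent letters — hence has at most $b - 1$ strictly negative entries, so its last $b$ slots cannot all lie in $V_{\lt 0}$ and $p'_{a, b}(v_\nu) = 0$; thus both sides of the claimed identity vanish (with $p_b$ read, compatibly with the target of $p'_{a, b}$, so that $p_b(v_J) = 0$ here). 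If instead $J \in ([-r, -1] \cap \ZZ)^b$, the key point is that \emph{no correction term ever arises}: whenever a letter $I_k \geq 0$ is slid past a letter $J_\ell < 0$ one has $J_\ell < I_k$, so \eqref{eq:vTi} applies through its first, ``clean'' branch with coefficient $1$, and a short induction along the reduced word (the configuration after the $k$-th block being $(I_1,\dots,I_k \mid J_1,\dots,J_b \mid I_{k+1},\dots,I_a)$) shows every elementary move has this shape. Hence $(v_J \otimes v_I) T_{w_{a, b}} = v_I \otimes v_J$ on the nose, $p'_{a, b}$ fixes this vector, and $v_I \otimes v_J = v_I \otimes p_b(v_J)$, so the first identity holds with $c_{I, J} = 1$.

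The ``moreover'' then follows by linearity. Write $w^-_J = \sum_K e_{J, K} v_K$ in the standard basis; from the construction of the vectors $w^-_{\bullet}$, every index $K$ that occurs has all entries nonzero — they are, up to the permutation induced by a $T_g$, the integers $\pm J'_\ell$ with $J'$ the nondecreasing rearrangement of $J$, all in $[1, r]$ — so $p_b(v_K) = v_K$ if $K \in ([-r, -1] \cap \ZZ)^b$ and $p_b(v_K) = 0$ otherwise. Applying the first identity to each term (the scalar being $1$ on the surviving indices),
\[
p'_{a, b}\bigl((w^-_J \otimes v_I) T_{w_{a, b}}\bigr)
= \sum_K e_{J, K}\, p'_{a, b}\bigl((v_K \otimes v_I) T_{w_{a, b}}\bigr)
= v_I \otimes \sum_K e_{J, K}\, p_b(v_K)
= v_I \otimes p_b(w^-_J),
\]
and $p_b(w^-_J) = c_J\, v_{-J}$ by Lemma~\ref{lem:Bountifulness}, so the right-hand side equals $c_J\, v_I \otimes v_{-J}$ — the asserted formula, with $c_{I, J} = 1$.

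The one step that genuinely needs an argument (short as it is) is the absence of correction terms when $J \in ([-r, -1])^b$: this is exactly where the hypothesis $I \in ([0, r])^a$ enters, and it is what makes $(v_J \otimes v_I) T_{w_{a, b}}$ collapse to the single term $v_I \otimes v_J$. The remaining points are routine: checking that the displayed word for $w_{a, b}$ is reduced, and keeping the conventions $V_{\lt 0}$ versus $V_{\leq 0}$ straight across $p'_{a, b}$, $p_b$, and the admissible range of $J$ in the first identity.
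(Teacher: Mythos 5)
Your proof is correct, and it takes a more explicit route than the paper's. The paper argues by Bruhat-order triangularity: it writes $(v_J \otimes v_I) T_{w_{a,b}} = c_{I,J}\,(v_J\otimes v_I)w_{a,b} + \sum_{g< w_{a,b}} c_g\,(v_J\otimes v_I)g$ and then asserts that $p'_{a,b}$ annihilates every lower-order term, leaving $c_{I,J}\,p'_{a,b}(v_I\otimes v_J) = c_{I,J}\, v_I\otimes p_b(v_J)$; the vanishing of those lower terms (which ultimately rests on the same observation you use, namely that anything short of the full permutation $w_{a,b}$ leaves a nonnegative entry of $I$ among the last $b$ slots) is not spelled out. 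You instead fix the reduced expression $\prod_{k}(T_{b+k-1}\cdots T_k)$ and check that when $J$ is entirely negative every elementary step falls into the first branch of \eqref{eq:vTi}, so the expansion collapses to the single term $v_I\otimes v_J$ and no projection argument is needed in the main case; the degenerate case is disposed of by your multiset argument. What this buys is an exact computation --- in particular $c_{I,J}=1$, sharper than the paper's ``some invertible scalar'' --- at the cost of being tied to the hypotheses $I\geq 0$, $J<0$ and to one particular reduced word, whereas the paper's triangularity argument is shorter and uniform in $J$ but leaves more to the reader. Your treatment of the ``moreover'' part (expand $w_J^-$ in the standard basis, apply the first identity termwise, reassemble via Lemma~\ref{lem:Bountifulness}) matches the paper's in substance. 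Finally, the convention issue you flag is real: with $p_b$ literally the projection onto $V_{\leq 0}^{\otimes b}$, the first identity would fail for $n$ odd when $J$ has a zero entry (the left side vanishes while the right side need not); your reading of $p_b$ as the projection onto $V_{< 0}^{\otimes b}$ is the one under which the statement is true, and the discrepancy is invisible in the only application, Lemma~\ref{lem:Telephote}, where $J\in([1,r]\cap\ZZ)^b$.
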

\begin{proof}
First note that $(v_J \otimes v_I) T_{w_{a, b}} = c_{I, J} (v_J \otimes v_I) w_{a, b} + \sum_{g < w_{a,b}} c_g (v_J \otimes v_I) g$ for some invertible $c_{I, J} \in K^\times$ and some $c_g \in K$, where $g < w_{a,b}$ under the Bruhat order. Hence, 
\eq\begin{split}
p'_{a, b}( (v_J \otimes v_I) T_{w_{a, b}} ) 
& =  p'_{a, b}( c_{I, J} (v_J \otimes v_I) w_{a, b} + \sum_{g \lt w_{a, b}} c_g (v_J \otimes v_I) g) 
\\
& =  c_{I, J} p'_{a, b}( v_I \otimes v_J ) + \sum_{g \lt w_{a, b}} c_g p'_{a, b}((v_J \otimes v_I) g)  \\
& =  c_{I, J} p'_{a, b}( v_I \otimes v_J ) = c_{I, J} v_I \otimes p_b(v_J).
\end{split}\endeq
By Lemma \ref{lem:Bountifulness}, $p_b(w_J^-) = c_J v_{-J}$ for some $c_J \in K^\times$. Therefore, $p'_{a, b}( (w_J^- \otimes v_I) T_{w_{a, b}} ) = c_{I, J} v_I \otimes p_b(w_J^-) = c_{I, J} c_J v_I \otimes v_{-J}$.
\end{proof}

\begin{lemma} \label{lem:Telephote}
For $I \in ([0, r] \cap \ZZ)^a$ and $J \in ([1, r] \cap \ZZ)^b$, $p_{a, b}((v_J \otimes v_I) v_{a, b}) = c v_{-I} \otimes v_{-J}$ for some $c \in K^\times$.
\end{lemma}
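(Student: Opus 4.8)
The plan is to substitute $v_{a,b} = u_b^- T_{w_{a,b}} u_a^+$ and peel off the three factors one at a time, invoking Lemmas~\ref{lem:Hyperaminoacidemia}, \ref{lem:Bountifulness} and \ref{lem:Instar} in that order. Everything rests on two elementary observations. First, since $u_b^-$ (resp.\ $u_a^+$) is a polynomial in $T_0,\dots,T_{b-1}$ (resp.\ $T_0,\dots,T_{a-1}$), right multiplication by it on $V^{\otimes d}$ touches only the first $b$ (resp.\ $a$) tensor slots; concretely, under the identification of $V^{\otimes d}$ with $V^{\otimes m}\otimes V^{\otimes(d-m)}$ as a module over the Hecke subalgebra generated by $T_0,\dots,T_{m-1}$, this operator is $(\,\cdot\, u)\otimes\mathrm{id}$. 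Second, directly from the definitions, $p_{a,b} = (p_a\otimes\mathrm{id}_{V_{<0}^{\otimes b}})\circ p'_{a,b}$, where $p_a\colon V^{\otimes a}\to V_{\leq 0}^{\otimes a}$ is the projection of Lemma~\ref{lem:Bountifulness}.

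Carrying this out: split $V^{\otimes d}=V^{\otimes b}\otimes V^{\otimes a}$ into the first $b$ and the last $a$ slots. By the first observation and Lemma~\ref{lem:Hyperaminoacidemia}(b) read inside $V^{\otimes b}$ (valid since $J\in([1,r]\cap\ZZ)^b$), $(v_J\otimes v_I)u_b^- = (v_J u_b^-)\otimes v_I = w_J^-\otimes v_I$. Now view the vector $(w_J^-\otimes v_I)T_{w_{a,b}}$ inside $V^{\otimes d}=V^{\otimes a}\otimes V^{\otimes b}$ (first $a$, last $b$ slots): with respect to this splitting $p'_{a,b}$ is the identity on the first $a$ slots while $u_a^+$ acts only there, so the two commute, and hence
\begin{align*}
p_{a,b}\big((v_J\otimes v_I)v_{a,b}\big)
&= (p_a\otimes\mathrm{id})\,p'_{a,b}\big((w_J^-\otimes v_I)T_{w_{a,b}}\,u_a^+\big)\\
&= (p_a\otimes\mathrm{id})\Big(\big(p'_{a,b}\big((w_J^-\otimes v_I)T_{w_{a,b}}\big)\big)\,u_a^+\Big).
\end{align*}
The ``moreover'' clause of Lemma~\ref{lem:Instar} gives $p'_{a,b}\big((w_J^-\otimes v_I)T_{w_{a,b}}\big)=c_{I,J}c_J\,v_I\otimes v_{-J}$ with $c_{I,J},c_J\in K^\times$; then $(v_I\otimes v_{-J})u_a^+=(v_I u_a^+)\otimes v_{-J}=w_I^+\otimes v_{-J}$ by Lemma~\ref{lem:Hyperaminoacidemia}(a) read inside $V^{\otimes a}$, and finally $p_a(w_I^+)=c_I v_{-I}$ for some $c_I\in K^\times$ by Lemma~\ref{lem:Bountifulness}. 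Composing, $p_{a,b}\big((v_J\otimes v_I)v_{a,b}\big)=c_{I,J}c_Jc_I\,v_{-I}\otimes v_{-J}$, which is the assertion with $c=c_{I,J}c_Jc_I\in K^\times$.

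I do not expect a genuine obstacle: the substantive work has been front-loaded into Lemmas~\ref{lem:Hyperaminoacidemia}, \ref{lem:Bountifulness} and \ref{lem:Instar}, and what remains is bookkeeping. The only point deserving a sentence of justification is the ``locality'' of right multiplication by $u_b^-$ and $u_a^+$ on the tensor slots, which is immediate from the explicit $\cH^\B_{Q,q}(d)$-action formulas~\eqref{eq:vTi} together with the fact that the Hecke subalgebra on $T_0,\dots,T_{m-1}$ sees $V^{\otimes d}$ as $V^{\otimes m}\otimes V^{\otimes(d-m)}$ with the action supported on the first factor. One should also keep the two block decompositions $V^{\otimes b}\otimes V^{\otimes a}$ (before $T_{w_{a,b}}$) and $V^{\otimes a}\otimes V^{\otimes b}$ (after it) clearly apart.
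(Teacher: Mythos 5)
Your proof is correct and follows essentially the same route as the paper's: substitute $v_{a,b}=u_b^-T_{w_{a,b}}u_a^+$, apply Lemma~\ref{lem:Hyperaminoacidemia} to absorb $u_b^-$ into $w_J^-$, insert $p'_{a,b}$ before $u_a^+$ acts, invoke the second part of Lemma~\ref{lem:Instar}, then Lemma~\ref{lem:Hyperaminoacidemia}(a) and Lemma~\ref{lem:Bountifulness}, arriving at the same constant $c=c_{I,J}c_Ic_J$. The only difference is that you make explicit the ``locality'' justification for commuting $p'_{a,b}$ past $u_a^+$, which the paper leaves implicit.
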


\begin{proof}
\begin{align*}
 p_{a, b}((v_J \otimes v_I) v_{a, b}) 
&= p_{a, b}((v_J \otimes v_I) u_b^- T_{w_{a, b}} u_a^+) & \text{Eq. \eqref{eq:Reswarm}} \\
&= p_{a, b}((w^-_J \otimes v_I) T_{w_{a, b}} u_a^+) & \text{Lemma \ref{lem:Hyperaminoacidemia}} \\
&= p_{a, b}(p'_{a, b}((w^-_J \otimes v_I) T_{w_{a, b}} ) u_a^+) \\
&= p_{a, b}(c_{I, J} c_J (v_I \otimes v_{-J}) u_a^+) & \text{Lemma \ref{lem:Instar}}\\
&= p_{a, b}(c_{I, J} c_J w^+_I \otimes v_{-J}) & \text{Lemma \ref{lem:Hyperaminoacidemia}} \\
&= c_{I, J} c_J p_a(w^+_I) \otimes v_{-J} \\
&= c_{I, J} c_I c_J v_{-I} \otimes v_{-J}. & \text{Lemma \ref{lem:Bountifulness}}
\end{align*}
\end{proof}

\begin{lemma} \label{lem:Chirivita}
For $a + b = d$, the map $v_I \otimes v_J \mapsto (v_J \otimes v_I) v_{a, b}$ gives an isomorphism of $\cH^\A(\Sigma_{a}) \otimes \cH^\A(\Sigma_{b})$-modules $V_{\geq 0}^a \otimes V_{\gt 0}^b \to V^{\otimes d} v_{a, b}$.
\end{lemma}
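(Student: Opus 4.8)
**The plan is to establish the isomorphism in three pieces: well-definedness and $\cH$-equivariance, injectivity, and surjectivity, treating the last via a dimension count.**

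First I would check that the assignment $v_I\otimes v_J\mapsto (v_J\otimes v_I)v_{a,b}$ is well-defined as a map on $V_{\geq0}^{\otimes a}\otimes V_{\gt0}^{\otimes b}$. Since $v_{a,b}=u_b^-T_{w_{a,b}}u_a^+$ and the elements $u_a^+,u_b^-$ lie in $\cH^\B_{Q,q}(d)$, right multiplication by $v_{a,b}$ on $V^{\otimes d}$ is a $K$-linear map, so the only content is to see that it restricts correctly; but this is automatic since we are simply pre-composing the inclusion $V_{\geq0}^{\otimes a}\otimes V_{\gt0}^{\otimes b}\hookrightarrow V^{\otimes b}\otimes V^{\otimes a}$ (after swapping the two tensor blocks) with right multiplication by $v_{a,b}$. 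For the equivariance under $\cH^\A(\Sigma_a)\otimes\cH^\A(\Sigma_b)$, note that this subalgebra is generated by $T_1,\dots,T_{a-1}$ (acting on the $V_{\geq0}^{\otimes a}$ block) and $T_{a+1},\dots,T_{d-1}$ (acting on the $V_{\gt0}^{\otimes b}$ block). On the target side, the relevant copy of $\cH^\A(\Sigma_a\times\Sigma_b)$ inside $\cH^\B_{Q,q}(d)$ is the one generated by $T_1,\dots,T_{b-1}$ and $T_{b+1},\dots,T_{d-1}$, and conjugation by $T_{w_{a,b}}$ interchanges these two parabolic blocks (this is exactly the combinatorial content of $w_{a,b}$ as a block transposition). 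Using Lemma~\ref{lem:Balladical}(a) (that $u_a^+$ is fixed, up to the center, by the relevant generators) together with the braid relations, one shows $((v_J\otimes v_I)v_{a,b})\cdot T_k$ matches the image of $(v_I\otimes v_J)$ acted on by the corresponding generator. I expect this bookkeeping to be the most delicate part, but it is the standard Dipper--James conjugation argument adapted to the tensor space.

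Next I would prove injectivity. Here Lemma~\ref{lem:Telephote} does the work: it shows that for $I\in([0,r]\cap\ZZ)^a$ and $J\in([1,r]\cap\ZZ)^b$ one has $p_{a,b}((v_J\otimes v_I)v_{a,b})=c\,v_{-I}\otimes v_{-J}$ with $c\in K^\times$, where $p_{a,b}$ is the projection onto $V_{\leq0}^{\otimes a}\otimes V_{\lt0}^{\otimes b}$. Since the vectors $\{v_{-I}\otimes v_{-J}\}$ as $I,J$ range over non-decreasing tuples (or, applying $\cH^\A(\Sigma_a)\otimes\cH^\A(\Sigma_b)$-translates, over all tuples with the right support) are linearly independent, and $p_{a,b}$ applied to the image of a nonzero element of $V_{\geq0}^{\otimes a}\otimes V_{\gt0}^{\otimes b}$ is a nonzero combination of distinct such vectors, the map has trivial kernel. (One should be slightly careful: the cleanest way is to observe that the composite $p_{a,b}\circ(\text{our map})$ sends the basis vector $v_I\otimes v_J$ to an invertible scalar times $v_{-I}\otimes v_{-J}$ plus, a priori, lower terms, and then invoke a triangularity/unitriangularity argument with respect to a suitable order on tuples; since distinct $(I,J)$ give distinct leading vectors $(v_{-I},v_{-J})$, the composite is injective, hence so is our map.)

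Finally, surjectivity. By Lemma~\ref{lem:Paddockstone}, $V^{\otimes d}v_{a,b}=(V_{\gt0}^{\otimes b}\otimes V_{\geq0}^{\otimes a})v_{a,b}$, which is precisely the image of our map; so the map is surjective onto $V^{\otimes d}v_{a,b}$ by construction, and combined with injectivity it is an isomorphism of vector spaces, hence (with the equivariance established above) an isomorphism of $\cH^\A(\Sigma_a)\otimes\cH^\A(\Sigma_b)$-modules. I would phrase the final assembly as: the map is $\cH^\A(\Sigma_a)\otimes\cH^\A(\Sigma_b)$-equivariant (part one), injective (part two), and surjective onto $V^{\otimes d}v_{a,b}$ by Lemma~\ref{lem:Paddockstone}, which gives the claim. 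The only real obstacle is the equivariance verification; injectivity and surjectivity are essentially immediate from the lemmas already proved.
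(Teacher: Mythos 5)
Your proposal is correct and follows essentially the same route as the paper: the paper establishes equivariance by citing the Dipper--James commutation relation $T_i v_{a,b} = v_{a,b}T_{i+a}$ (resp.\ $v_{a,b}T_{i-b}$) from \cite[Lemma~3.10]{DJ89}, which is exactly the ``block transposition'' conjugation you sketch, and then, as you do, deduces injectivity from Lemma~\ref{lem:Telephote} and surjectivity from Lemma~\ref{lem:Paddockstone}. The only cosmetic differences are that your injectivity step needs no triangularity hedge (Lemma~\ref{lem:Telephote} already gives a pure scalar multiple of $v_{-I}\otimes v_{-J}$, no lower terms) and that surjectivity comes straight from Lemma~\ref{lem:Paddockstone} rather than any dimension count.
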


\begin{proof}
It follows from \cite[Lemma~3.10]{DJ89} that
\eq T_i v_{a, b} = \begin{cases}
		T_{i + a}, & 1 \leq i \leq b; \\
		T_{i - b}, & b + 1 \leq i \leq a + b - 1. \\
	\end{cases} \endeq
Hence the map is $\cH^\A(\Sigma_{a}) \otimes \cH^\A(\Sigma_{b})$-equivariant. The injectivity follows from Lemma \ref{lem:Telephote}, and the surjectivity follows from Lemma \ref{lem:Paddockstone}.
\end{proof}

\subsection{The proof}

Finally, we are in a position to prove the isomorphism theorem.
\begin{proof}[Proof of Theorem \ref{thm:SBA}]
\begin{align*}
S_{Q,q}^\B(n, d) =  {\rm End}_{\cH_{Q,q}^\B(d)}(V^{\otimes d}) 
= & {\rm End}_{\bigoplus_{0\leq i \leq d} e_{i, d - i} \cH_{Q,q}^\B(d) e_{i, d - i}}(V^{\otimes d} e_{i, d - i}) & \text{Lemma \ref{lem:Balladical}(e)}
\\
= & \bigoplus_{0\leq i \leq d} {\rm End}_{e_{i, d - i} \cH_{Q,q}^\B(d) e_{i, d - i}}(V^{\otimes d} e_{i, d - i})
\\
= & \bigoplus_{0\leq i \leq d} {\rm End}_{\cH_q^\A(\Sigma_i) \otimes \cH^\A_q(\Sigma_{d - i})}(V^{\otimes d} v_{i, d - i}) & \text{Lemma \ref{lem:Balladical}(c)(d)}
\\
= & \bigoplus_{0\leq i \leq d} {\rm End}_{\cH_q^\A(\Sigma_i) \otimes \cH^\A_q(\Sigma_{d - i})}(V_{\geq 0}^{\otimes i} \otimes V_{\gt 0}^{\otimes (d - i)}) & \text{Lemma \ref{lem:Chirivita}} \\
= & \bigoplus_{0\leq i \leq d} {\rm End}_{\cH^\A_q(\Sigma_i)}(V_{\geq 0}^{\otimes i}) \otimes {\rm End}_{\cH^\A_q(\Sigma_{d - i})}(V_{\gt 0}^{\otimes d - i}) \\
= & \bigoplus_{0\leq i \leq d} S_q^\A(\lceil n / 2 \rceil, i) \otimes S_q^\A(\lfloor n / 2 \rfloor, d - i).  & \text{Eq. \eqref{eq:Gyroscopic}}
\end{align*}
\end{proof}

\subsection{Simple modules of $S_{Q,q}^\B(n,d)$} As an immediate consequence of the isomorphism theorem one obtains a classification of irreducible representations for $S_{Q,q}^\B(n,d)$. 

\begin{thm}\label{thm:ModSB} If $f^\B_d(Q, q)$ is invertible in the field $K$ then there is a bijection
\[
\{\textup{Irreducible representations of }S_{Q,q}^\B(n,d)\} \leftrightarrow 
\{(\lambda,\mu) \vdash (d_1, d_2) ~|~ d_1 + d_2 = d\},
\]
where number of parts of $\lambda$ and $\mu$ is no more than $n$. In particular, the standard modules over $S_{Q,q}^\B(n,d)$ are of the form $\nabla(\lambda)\boxtimes \nabla(\mu)$, where $\nabla(\lambda)$ (resp. $\nabla(\lambda)$) are standard modules over $S_q^\A(\nhc,d_1)$ (resp. $S_q^\A(\nhf,d_2)$).
\end{thm}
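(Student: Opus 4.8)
The plan is to read the classification off directly from the isomorphism theorem (Theorem~\ref{thm:SBA}) together with the well-known representation theory of the type A $q$-Schur algebra. Since $f^\B_d(Q,q)$ is invertible, Theorem~\ref{thm:SBA} identifies $S_{Q,q}^\B(n,d)$ with the direct sum $\bigoplus_{i=0}^{d} S_q^\A(\nhc,i)\otimes_K S_q^\A(\nhf,d-i)$. For a finite direct sum of finite-dimensional $K$-algebras the isomorphism classes of simple modules form the disjoint union of those of the summands, so it suffices to describe $\Irr\big(S_q^\A(\nhc,i)\otimes_K S_q^\A(\nhf,d-i)\big)$ for each $0 \leq i \leq d$.

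Next I would invoke the fact that the type A $q$-Schur algebra $S_q^\A(m,j)$ is split over any field: its irreducible modules $L(\lambda)$, the simple heads of the Weyl modules, are absolutely irreducible (see \cite{PW91}, or the cellular-algebra description of $S_q^\A$). Consequently, for $A = S_q^\A(\nhc,i)$ and $B = S_q^\A(\nhf,d-i)$ one has $\Irr(A\otimes_K B) = \{\,L(\lambda)\boxtimes L(\mu) \mid L(\lambda)\in\Irr(A),\ L(\mu)\in\Irr(B)\,\}$. Combined with the classical parametrization of $\Irr(S_q^\A(m,j))$ by partitions of $j$ with at most $m$ parts, this gives, for fixed $i$,
\[
\Irr\big(S_q^\A(\nhc,i)\otimes_K S_q^\A(\nhf,d-i)\big)\ \longleftrightarrow\ \{(\lambda,\mu)\mid \lambda\vdash i,\ \mu\vdash d-i,\ \ell(\lambda)\leq\nhc,\ \ell(\mu)\leq\nhf\}.
\]
Taking the union over $0\leq i\leq d$ and writing $d_1=i$, $d_2=d-i$ yields the asserted bijection; the extreme values $i=0$ and $i=d$ cause no trouble since $S_q^\A(m,0)=K$ contributes only the empty partition.

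For the statement about standard modules, I would use that each $S_q^\A(m,j)$ is quasi-hereditary, with standard (Weyl) modules $\Delta(\lambda)$ and costandard modules $\nabla(\lambda)$ indexed by the same partitions. An external tensor product of quasi-hereditary algebras is again quasi-hereditary, with standard modules $\Delta(\lambda)\boxtimes\Delta(\mu)$ ordered by the product of the two dominance orders, and a finite direct sum of quasi-hereditary algebras is quasi-hereditary with the union of the standard modules. Transporting this structure across the algebra isomorphism of Theorem~\ref{thm:SBA} shows that the standard (resp. costandard) modules of $S_{Q,q}^\B(n,d)$ are precisely $\Delta(\lambda)\boxtimes\Delta(\mu)$ (resp. $\nabla(\lambda)\boxtimes\nabla(\mu)$) for the pairs $(\lambda,\mu)$ above.

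I expect the only delicate point to be the behaviour of simple modules under $\otimes_K$: one must know that $S_q^\A$ is split over $K$ (equivalently, that $K$ is a splitting field for it), which is exactly where the cellular / quasi-hereditary structure of the $q$-Schur algebra enters; the remainder is bookkeeping with direct sums, external tensor products, and the known type A parametrization.
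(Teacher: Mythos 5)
Your proposal is correct and follows exactly the route the paper intends: the paper presents this theorem as an immediate consequence of Theorem~\ref{thm:SBA} and gives no further argument, while you supply the standard bookkeeping (simples of a direct sum, external tensor products of split simple modules, and the type A parametrization by partitions with bounded number of parts). Your observation that the bounds on the numbers of parts should be $\nhc$ and $\nhf$ rather than $n$ is a sharper reading of the statement than the paper's own phrasing.
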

\rmk
There are variants of our isomorphism theorem in the literature related to different Schur algebras. 
In \cite{GH97} it is established a Morita equivalence
\eq
S(\cH(W^\B_d))
\Mor
\bigoplus_{i=0}^d S_q^\A(i, i) \otimes S_q^\A(d-i, d - i),
\endeq
where $S(\cH(W^\B_d))$ is an endomorphism algebra on a $q$-permutation module involving $r$-compatible compositions.

By \cite[Theorem~3.2]{A99}, under a separation condition at the specialization $u_1 = -Q, u_2 = Q^{-1}$, 
the Sakamoto-Shoji algebra $S^\B_{u_1, u_2, q}(\nhc, \nhf, d)$ (see \ref{sec:SS}) is isomorphic to the right hand side of Theorem~\ref{thm:SBA}, and hence is isomorphic to our algebra $S^\B_{Q,q}(n,d)$.
When the separation condition fails, the two algebras do not coincide since their dimensions do not match. 
For instance, in \cite[\S2, Example]{A99} it is computed that $\dim S^\B_{u_1,u_2,q}(1,1,2)$ can be 3, 4, 5 and 10 while $\dim S^\B_q(2,d)$ is always 3.
\endrmk

\section{Schur Functors} \label{sec:SF}

\subsection{Schur functors}
For type A it is well-known that, provided $n \geq d$, there is an idempotent $e^\A = e^\A(n,d) \in S^\A_q(n,d)$
 such that $e^\A S^\A_q(n,d) e^\A \simeq \cH_q(\Sigma_{d})$, and a Schur functor
\eq\label{def:SFA}
F^\A_{n,d}: \Mod(S^\A_{q}(n,{d})) \to \Mod(\cH_{q}(\Sigma_{d})),
\quad
M\mapsto e^\A M.
\endeq

In the following proposition we construct the Schur functor for $S^\B_{Q,q}(n,d)$ when $\nhf \geq d$.

\prop\label{prop:SFB}
If $\nhf \geq d$ then there is an idempotent  $e^\B = e^\B(n,d) \in  S^\B_{Q,q}(n,d)$ such that $e^\B S^\B_{Q,q}(n,d) e^\B \simeq \cH^\B_{Q,q}(d)$ as $K$-algebras, and $e^\B S^\B_{Q,q}(n,d) \simeq V^{\otimes d}$ as $(S^\B_{Q,q}(n,d), \cH^\B_{Q,q}(d))$-bimodules.. 
\endprop
\proof
Recall $\Lambda^\B(n,d)$ from \eqref{def:LBnd} and $\phi^g_{\lambda\mu}$ from \eqref{def:phiB}.
Let
$ e^\B = \phi^1_{\omega\omega}$,
where 
\eq
\omega =\Bc{
\{
(0, \ldots, 0, \underset{2d}{\underbrace{1, \ldots, 1}}, 0 \ldots, 0) \in \Lambda^\B(2r,d)
\}
&\tif n = 2r;
\\
\{
 (0, \ldots, 0, \underset{2d+1}{\underbrace{1, \ldots, 1}}, 0 \ldots, 0) \in \Lambda^\B(2r+1,d)
\}
&\tif n = 2r+1.
}
\endeq
Note that such $\omega$ is well-defined only when $r = \nhf \geq d$.
By Lemma~\ref{lem:SBmult}, we have 
\eq
e^\B \phi_{\lambda\mu}^g e^\B = \Bc{
\phi_{\lambda\mu}^g&\tif \lambda = \omega = \mu;
\\
0&\otw.
}
\endeq
Since $W_\omega$ is the trivial group, $x_\omega = 1 \in \cH^\B_{Q,q}(d)$ and hence
$
\phi^g_{\omega\omega}$ is uniquely determined by $1 \mapsto T_g$. Therefore,
 $e^\B S^\B_{Q,q}(n, d)e^\B $ and $\cH^\B_{Q,q}(d)$ are isomorphic as algebras.
 
Now from Section~\ref{sec:comb} we see that there is a canonical identification 
\eq
V^{\otimes d}
\simeq 
\bigoplus_{\mu \in \Lambda^\B(n,d)} x_\mu \cH^\B_{Q,q} 
\simeq
\bigoplus_{\mu \in \Lambda^\B(n,d)} \Hom_{\cH^\B_{Q,q}}(x_\omega \cH^\B_{Q,q}, x_\mu \cH^\B_{Q,q}),
\endeq
and hence the maps $\phi^g_{\omega\mu}$, with $\mu \in \Lambda^\B(n,d)$, $g$ a minimal length coset representative for $W^\B/W_\mu$, forms a linear basis for $V^{\otimes d}$.
Again by Lemma~\ref{lem:SBmult}, we have 
\eq\label{eq:eBphi}
e^\B \phi_{\lambda\mu}^g  = \Bc{
\phi_{\omega\mu}^g&\tif \lambda = \omega;
\\
0&\otw.
}
\endeq
Hence, $e^\B S^\B_{Q,q}(n,d)$ has a linear basis $\{\phi^g_{\omega\mu}\}$ where $\mu \in \Lambda^\B(n,d)$, $g$ a minimal length double coset representative for $W_\omega \backslash W^\B/W_\mu$. Therefore $V^{\otimes d}$ and $e^\B S^\B_{Q,q}(n,d)$ are isomorphic as $(S^\B_{Q,q}(n,d), \cH^\B_{Q,q}(d))$-bimodules.
\endproof
We define the Schur functor of type B by
\eq
F^\B_{n,d}: \Mod(S^\B_{Q,q}(n,d)) \to \Mod(\cH^\B_{Q,q}(d)),
\quad
M\mapsto e^\B M.
\endeq
Define the inverse Schur functor by
\eq
G^\B_d: \Mod(\cH^\B_{Q,q}(d)) \to \Mod(S^\B_{Q,q}(n,d)),
\quad
M 
\mapsto
\Hom_{e^\B S^\B_{Q,q}(n,d) e^\B}(e^\B S^\B_{Q,q}(n,d), M).
\endeq

In below we define a Schur-like functor $F^\flat_{n,d}: \Mod(S^\B_{Q,q}(n,d)) \to \Mod(\cH^\B_{Q,q}(d))$ using Theorem~\ref{thm:SBA}, under the same invertibility assumption: recall $\Phi$ from \eqref{eq:Phi},
let 
\eq
\epsilon^\flat = \epsilon^\flat_{n,d} = \Phi^{-1}(\bigoplus_{i=0}^d e^\A(\nhc,i) \otimes e^\A(\nhf, d-i)).
\endeq
Note that $\epsilon^\flat S^\B_{Q,q}(n,d)\epsilon^\flat \simeq \bigoplus_{i=0}^d \cH_q(\Sigma_{i+1}) \otimes \cH_q(\Sigma_{d-i+1})$, and hence left multiplication by $\epsilon^\flat$ defines a functor
$\Mod(S^\B_{Q,q}(n,d)) \to \Mod(\bigoplus_{i=0}^d \cH_q(\Sigma_{i+1}) \otimes \cH_q(\Sigma_{d-i+1}))$.
Hence, we can define
\eq\label{def:Fflat}
F^\flat_{n,d}: \Mod(S^\B_{Q,q}(n,d)) \to \Mod(\cH^\B_{Q,q}(d)),
\quad
M \mapsto \cF_H^{-1}(\epsilon^\flat M),
\endeq
where $\cF_H$ is the Morita equivalence for the Hecke algebras given by
\eq\label{def:FH}
\cF_H: \textup{Mod}(\cH^\B_{Q,q}(d)) \to \textup{Mod}
\Big(
\bigoplus_i \cH_{q}(\Sigma_{i+1})\otimes \cH_{q}(\Sigma_{d-i+1})
\Big).
\endeq
Under the invertibility condition, one can define an equivalence of categories induced from $\Phi$ as below:
\eq\label{def:FS}
\cF_S: \textup{Mod}(S^\B_{Q,q}(n,d)) \to \textup{Mod}
\Big(
\bigoplus\limits_{i=0}^d S^\A_q(\nhc, i)\otimes S^\A_q(\nhf, d-i)
\Big).
\endeq
In other words, we have the following commutativity of functors:
\begin{prop}\label{prop:SF}
Assume $\nhf \geq d \geq i \geq 0$ and that $f^\B_q$ is invertible. The diagram below commutes:
\eq\label{eq:SF}
\begin{tikzcd}
\textup{Mod}(S^\B_{Q,q}(n,d)) \arrow{r}{\cF_s} \arrow{d}{F^\flat_{n,d}}& \textup{Mod}
\Big(
\bigoplus\limits_{i=0}^d S^\A_q(\nhc, i)\otimes S^\A_q(\nhf, d-i)
\Big) \arrow{d}{\bigoplus\limits_{i=0}^d F^\A_{\nhc,i} \otimes F^\A_{\nhf, d-i}}
\\
\textup{Mod}(\cH^\B_{Q,q}(d)) \arrow{r}{\cF_H} & \textup{Mod}
\Big(
\bigoplus\limits_{i=0}^d \cH_{q}(\Sigma_{i+1})\otimes \cH_{q}(\Sigma_{d-i+1})
\Big)
\end{tikzcd}
\endeq
\end{prop}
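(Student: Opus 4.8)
The plan is to unwind all four functors appearing in \eqref{eq:SF} to the level of modules and bimodules, and then to check commutativity by a direct comparison of the two composites applied to an arbitrary $S^\B_{Q,q}(n,d)$-module $M$. Concretely, $\cF_S$ is the equivalence $M \mapsto \Phi_* M$ obtained by transport of structure along the algebra isomorphism $\Phi$ of Theorem~\ref{thm:SBA}; the right vertical arrow is the direct sum of the type A Schur functors, which act by left multiplication by the idempotents $e^\A(\nhc,i) \otimes e^\A(\nhf,d-i)$; the bottom arrow $\cF_H$ is the Dipper--James Morita equivalence, which by Lemma~\ref{lem:Balladical}(e) is realized by the $\bigl(\bigoplus_i e_{i,d-i}\cH^\B_{Q,q}(d)e_{i,d-i},\ \cH^\B_{Q,q}(d)\bigr)$-bimodule $\bigoplus_i e_{i,d-i}\cH^\B_{Q,q}(d)$; and the left arrow $F^\flat_{n,d}$ is by definition $M \mapsto \cF_H^{-1}(\epsilon^\flat M)$. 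So traversing the square from the top-left down-then-right gives $\cF_H(F^\flat_{n,d}(M)) = \cF_H(\cF_H^{-1}(\epsilon^\flat M)) = \epsilon^\flat M$, while going right-then-down gives $\bigl(\bigoplus_i F^\A_{\nhc,i}\otimes F^\A_{\nhf,d-i}\bigr)(\cF_S(M)) = \bigl(\bigoplus_i e^\A(\nhc,i)\otimes e^\A(\nhf,d-i)\bigr)\cdot \Phi_* M$. Thus the content of the proposition is precisely that, under transport along $\Phi$, the idempotent $\epsilon^\flat \in S^\B_{Q,q}(n,d)$ corresponds to $\bigoplus_i e^\A(\nhc,i)\otimes e^\A(\nhf,d-i)$ as an endomorphism of the underlying vector space of $M$ — and this is exactly how $\epsilon^\flat$ was defined, namely $\epsilon^\flat = \Phi^{-1}\bigl(\bigoplus_i e^\A(\nhc,i)\otimes e^\A(\nhf,d-i)\bigr)$.

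First I would make precise what ``the square commutes'' means: since the four functors are not literally equal on the nose but only naturally isomorphic composites, I would exhibit a natural isomorphism between the two composite functors. The cleanest route is to observe that all of $\cF_S$, $\cF_H$, and the two vertical arrows are given by tensoring with explicit bimodules (or, what is the same for Morita equivalences, by multiplication by idempotents followed by transport of scalars), so that both composites are isomorphic to the functor $M \mapsto \bigl(\bigoplus_i e^\A(\nhc,i)\otimes e^\A(\nhf,d-i)\bigr)\,\Phi_* M$, viewed as a module over $\bigoplus_i \cH_q(\Sigma_{i+1})\otimes\cH_q(\Sigma_{d-i+1})$ via the isomorphism $e^\A(\nhc,i)S^\A_q(\nhc,i)e^\A(\nhc,i)\simeq \cH_q(\Sigma_{i+1})$ of the type A theory (note the shift: here $\nhc \geq i$ forces the Schur functor idempotent to cut out $\cH_q(\Sigma_i)$, but the indexing in \eqref{def:FH} uses $\Sigma_{i+1}$ because $\omega\in\Lambda^\B$ has $2d+1$ or $2d$ ones; I would reconcile this bookkeeping carefully, matching it against the definition of $\epsilon^\flat$ and the statement preceding \eqref{def:Fflat}). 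Once both composites are identified with this common functor, the natural isomorphism is built from the defining natural isomorphisms of $\Phi$ (Theorem~\ref{thm:SBA}) and of the Dipper--James equivalence (Lemma~\ref{lem:Balladical}), together with associativity/compatibility of tensor products — all of which is formal.

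The key steps, in order: (1) rewrite each of the four functors as ``multiply by an idempotent, then transport scalars along an algebra isomorphism''; (2) compute the down-then-right composite, getting $\epsilon^\flat M$ regarded as an $\cH^\B_{Q,q}(d)$-module pushed through $\cF_H$, i.e.\ simply $\epsilon^\flat M$ with its induced action; (3) compute the right-then-down composite, getting $\bigl(\bigoplus_i e^\A(\nhc,i)\otimes e^\A(\nhf,d-i)\bigr)\Phi_*M$; (4) invoke the definition $\epsilon^\flat = \Phi^{-1}\bigl(\bigoplus_i e^\A(\nhc,i)\otimes e^\A(\nhf,d-i)\bigr)$ to identify the underlying spaces and the two actions, using that $\Phi$ intertwines them and that the identification $\epsilon^\flat S^\B_{Q,q}(n,d)\epsilon^\flat \simeq \bigoplus_i \cH_q(\Sigma_{i+1})\otimes\cH_q(\Sigma_{d-i+1})$ is the one induced by $\Phi$ followed by the type A Schur isomorphisms; (5) package these identifications into a genuine natural isomorphism of functors, checking naturality in $M$ (which is automatic since every identification used is functorial). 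I expect the main obstacle to be purely organizational rather than mathematical: namely, keeping the Morita equivalence $\cF_H$ — which a priori lands in modules over $\bigoplus_i e_{i,d-i}\cH^\B_{Q,q}(d)e_{i,d-i}$ — correctly identified with modules over $\bigoplus_i \cH_q(\Sigma_{i+1})\otimes\cH_q(\Sigma_{d-i+1})$ via Lemma~\ref{lem:Balladical}(c), and making sure this identification is compatible with the one coming through $\Phi$ and the type A Schur functors. In other words, the one genuine thing to verify is that the diagram of \emph{algebra} isomorphisms (relating $\epsilon^\flat S^\B_{Q,q}(n,d)\epsilon^\flat$, $\bigoplus_i e_{i,d-i}\cH^\B_{Q,q}(d)e_{i,d-i}$, and $\bigoplus_i \cH_q(\Sigma_{i+1})\otimes\cH_q(\Sigma_{d-i+1})$) commutes; this follows by tracing through the proof of Theorem~\ref{thm:SBA}, where $\Phi$ was constructed precisely so that $V^{\otimes d}e_{i,d-i}$ maps to $V_{\geq 0}^{\otimes i}\otimes V_{>0}^{\otimes(d-i)}$ compatibly with the Hecke actions. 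Everything else is a formal diagram chase.
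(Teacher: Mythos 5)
Your proposal is correct and matches the paper's own treatment: the paper offers no separate argument, introducing the proposition with ``in other words'' as an immediate unwinding of the definition $F^\flat_{n,d}(M)=\cF_H^{-1}(\epsilon^\flat M)$ with $\epsilon^\flat=\Phi^{-1}\bigl(\bigoplus_i e^\A(\nhc,i)\otimes e^\A(\nhf,d-i)\bigr)$, which is exactly the identification you carry out. Your side remark about reconciling the $\Sigma_{i+1}$ versus $\Sigma_i$ indexing is well taken --- that discrepancy is present in the paper's own \eqref{def:FH} and appears to be a typographical slip rather than a gap in your argument.
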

\rmk
We expect that Proposition~\ref{prop:SF} still holds if we replace the functor $F^\flat_{n,d}$ therein by $F^\B_{n,d}$. 
\endrmk
\subsection{Existence of idempotents}
We construct additional idempotents in Schur algebras of type B that will be used later in Section~\ref{sec:Rep}.

\prop\label{prop:idem}
There exists an idempotent  $e  \in  S^\B_{Q,q}(n',d)$ such that $e S^\B_{Q,q}(n',d) e \simeq S^\B_{Q,q}(n,d)$ if either one of the following holds:
\enu[(a)]
\item $n' \geq n$ and $n' \equiv n \mod 2$;
\item $n' = 2r' +1 \geq n = 2r$.
\endenu
\endprop
\proof
We use the combinatorial realization in Section~\ref{sec:comb}. 
For (a)
we set
\eq 
e =  \sum_{\gamma}\phi^1_{\gamma\gamma},
\endeq 
where $\gamma$ runs over the set
\eq
\Lambda^\B(n',d)|_{n} = 
\Bc{
\{
\gamma = (0, \ldots, 0, \underset{n}{\underbrace{*, \ldots, *}}, 0 \ldots, 0) \in \Lambda^\B(n',d)
\}
&\tif (a) \textup{ holds};
\\
\{
\gamma = (0, \ldots, 0, \underset{r}{\underbrace{*, \ldots, *}}, 1, \underset{r}{\underbrace{*, \ldots, *}}, 0 \ldots, 0) \in \Lambda^\B(n',d)
\}
&\tif (b) \textup{ holds}.
}
\endeq

By Lemma~\ref{lem:SBmult} we have 
\eq
e \phi_{\lambda\mu}^g e = \Bc{
\phi_{\lambda\mu}^g&\tif \lambda, \mu \in \Lambda^\B(n',d)|_{n};
\\
0&\otw.
}
\endeq
It follows by construction that $eS^\B_{Q,q}(n', d)e$ and $S^\B_{Q,q}(n, d)$ are isomorphic as algebras.
\endproof

\subsection{Existence of spectral sequences}

Let $A$ be a finite dimensional algebra over a field $k$ and $e$ be an idempotent in $A$. 
Doty, Erdmann and Nakano \cite{DEN04} established a relationship between 
the cohomology theory in $\text{Mod}(A)$ versus $\text{Mod}(eAe)$. More specifically, 
they construct a Grothendieck spectral sequence which starts from extensions of $A$-modules and converges
to extensions of $eAe$-modules. 

There are two important functors involved in this construction. The first functor is an exact functor from $\text{Mod}(A)$ to 
$\text{Mod}(eAe)$ denoted by ${\mathcal F}$ (that is a special case of the classical Schur functor) defined by ${\mathcal F}(-)=e(-)$.   
The other functor is a left exact functor from $\text{Mod}(eAe)$ to $\text{Mod}(A)$, denoted ${\mathcal G}$ defined by 
${\mathcal G}(-)=\text{Hom}_{A}(Ae,-)$. This functor is right adjoint to ${\mathcal F}$. 

In \cite{DEN04}, the aforementioned construction was used in the quantum setting to 
relate the extensions for quantum $\GL_{n}$ to those for Hecke algebras. For $\nhf \geq d$ there exists an 
idempotent $e\in S_{Q,q}^{\B}(n,d)$ such that $\cH_{Q,q}^\B(d)\cong eS_{Q,q}^{\B}(n,d)e$. Therefore, we obtain 
a relationship between cohomology of the type B Schur algebras with the Hecke algebra of type B. 

\begin{thm} Let $\nhf \geq d$ with $M\in \operatorname{Mod}(S_{Q,q}^{\B}(n,d))$ and $N\in \operatorname{Mod}(\cH_{Q,q}^\B(d))$. 
There exists a first quadrant spectral sequence 
$$E_{2}^{i,j}=\operatorname{Ext}^{i}_{S_{Q,q}^{\B}(n,d)}(M,R^{j}{\mathcal G}(N))\Rightarrow \operatorname{Ext}^{i+j}_{\cH_{Q,q}^\B(d)}(eM,N).$$ 
where $R^{j}{\mathcal G}(-)=\operatorname{Ext}_{\cH_{Q,q}^\B(d) }^{j}(V^{\otimes d},N)$. 
\end{thm}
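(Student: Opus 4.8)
The plan is to invoke the general machinery of Doty--Erdmann--Nakano \cite{DEN04} directly, once the hypotheses of their construction have been checked for the pair $(A, e) = (S_{Q,q}^\B(n,d), e^\B)$. First I would recall that $S_{Q,q}^\B(n,d)$ is a finite-dimensional $K$-algebra (its dimension is given explicitly in \eqref{eq:dimSB}), and that for $\nhf \geq d$ Proposition~\ref{prop:SFB} furnishes an idempotent $e^\B \in S_{Q,q}^\B(n,d)$ with $e^\B S_{Q,q}^\B(n,d) e^\B \cong \cH_{Q,q}^\B(d)$ and, crucially, $e^\B S_{Q,q}^\B(n,d) \cong V^{\otimes d}$ as $(S_{Q,q}^\B(n,d), \cH_{Q,q}^\B(d))$-bimodules. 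These are exactly the input data required by \cite{DEN04}: a finite-dimensional algebra $A = S_{Q,q}^\B(n,d)$, an idempotent $e = e^\B$, the exact Schur functor $\cF = e^\B(-)\colon \Mod(A) \to \Mod(eAe)$, and its right adjoint $\cG = \Hom_A(Ae^\B, -)\colon \Mod(eAe) \to \Mod(A)$, which is left exact.

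The core of the argument is the Grothendieck spectral sequence attached to the composite of functors. The key observation is that for $M \in \Mod(A)$ one has a natural isomorphism $\Hom_{eAe}(\cF M, N) \cong \Hom_A(M, \cG N)$ (the adjunction), and since $\cF$ is exact it sends projective $A$-modules to projective $eAe = \cH_{Q,q}^\B(d)$-modules, so the Grothendieck spectral sequence of the composite $\Hom_{eAe}(\cF(-), N) = \Hom_A(-, \cG N)$ applied to $M$ reads
\[
E_2^{i,j} = \Ext^i_A(M, R^j\cG(N)) \Rightarrow \Ext^{i+j}_{eAe}(\cF M, N),
\]
which is a first-quadrant spectral sequence because both $\Ext$-functors are nonnegatively graded. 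Substituting $A = S_{Q,q}^\B(n,d)$, $eAe = \cH_{Q,q}^\B(d)$ and $\cF M = e^\B M$ gives the stated abutment $\Ext^{i+j}_{\cH_{Q,q}^\B(d)}(e^\B M, N)$.

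It remains to identify the derived functors $R^j\cG$. Since $\cG(-) = \Hom_A(Ae^\B, -)$ and $Ae^\B = S_{Q,q}^\B(n,d) e^\B$, one computes $R^j\cG(N) = \Ext^j_A(Ae^\B, N)$; using the bimodule identification $Ae^\B \cong (e^\B S_{Q,q}^\B(n,d))^* $ — equivalently, working with the right-module incarnation $e^\B S_{Q,q}^\B(n,d) \cong V^{\otimes d}$ from Proposition~\ref{prop:SFB} — this becomes $R^j\cG(N) = \Ext^j_{\cH_{Q,q}^\B(d)}(V^{\otimes d}, N)$, as claimed. Here one uses that $V^{\otimes d}$, as a right $\cH_{Q,q}^\B(d)$-module, carries the commuting left $S_{Q,q}^\B(n,d)$-action by definition \eqref{def:Sq}, so the $\Ext$-group inherits an $S_{Q,q}^\B(n,d)$-module structure, which is exactly the structure on $R^j\cG(N)$ coming from the $A$-module structure on $Ae^\B$.

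The main obstacle I anticipate is not conceptual but bookkeeping: one must be careful about left-versus-right module conventions throughout, since $\cH_{Q,q}^\B(d)$-modules appear on both sides and $V^{\otimes d}$ is naturally a bimodule; in particular, verifying that the $S_{Q,q}^\B(n,d)$-action on $\Ext^j_{\cH_{Q,q}^\B(d)}(V^{\otimes d}, N)$ coming from \cite{DEN04} agrees with the one induced by functoriality in the first variable requires tracing through the adjunction isomorphism and the bimodule identification of Proposition~\ref{prop:SFB}. Once the conventions are pinned down, the theorem is essentially an application of \cite[Theorem~2.1 or its analogue]{DEN04} verbatim, with $\GL_n$ and the type A Hecke algebra replaced by $S_{Q,q}^\B(n,d)$ and $\cH_{Q,q}^\B(d)$; the only genuinely new ingredient is Proposition~\ref{prop:SFB}, which supplies the idempotent and the bimodule isomorphism that make the hypotheses of \cite{DEN04} hold in the type B setting.
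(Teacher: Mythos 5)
Your proposal is correct and follows essentially the same route as the paper, which gives no separate argument beyond citing the Doty--Erdmann--Nakano construction together with the idempotent and bimodule isomorphism $e^\B S^\B_{Q,q}(n,d)\cong V^{\otimes d}$ from Proposition~\ref{prop:SFB}. The only caution is the one you already flag: the adjoint functor should be written as $\mathcal{G}(-)=\Hom_{e A e}(eA,-)$ (so that $R^{j}\mathcal{G}(N)=\Ext^{j}_{\cH^\B_{Q,q}(d)}(V^{\otimes d},N)$ comes out over the Hecke algebra, matching the statement), rather than $\Hom_A(Ae,-)$.
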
 

We can also compare cohomology between $S_{Q,q}^{\B}(n,d)$ and $S_{Q,q}^{\B}(n^{\prime},d)$ where $n^{\prime}\geq n$ since there exists 
an idempotent $e\in S_{Q,q}^{\B}(n^{\prime},d)$ such that $S_{Q,q}^{\B}(n,d)\cong eS_{Q,q}^{\B}(n^{\prime},d)e$ thanks to Proposition~\ref{prop:idem}.  

\begin{thm} Let $M\in \operatorname{Mod}(S_{Q,q}^{\B}(n^{\prime},d))$ and $N\in \operatorname{Mod}(S_{Q,q}^{\B}(n,d))$. 
Assume that either 
\enu[(a)]
\item $n' \geq n$ and $n' \equiv n \mod 2$;
\item $n' = 2r' +1 \geq n = 2r$.
\endenu
Then there exists a first quadrant spectral sequence 
$$E_{2}^{i,j}=\operatorname{Ext}^{i}_{S_{Q,q}^{\B}(n^{\prime},d)}(M,R^{j}{\mathcal G}(N))\Rightarrow \operatorname{Ext}^{i+j}_{S_{Q,q}^{\B}(n,d)}(eM,N).$$ 
where $R^{j}{\mathcal G}(-)=\operatorname{Ext}_{S_{Q,q}^{\B}(n,d)}^{j}(eS_{Q,q}^{\B}(n^{\prime},d),-)$. 
\end{thm}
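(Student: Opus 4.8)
The statement is a routine consequence of the general spectral sequence of Doty--Erdmann--Nakano \cite{DEN04} applied to the idempotent $e \in S_{Q,q}^\B(n',d)$ furnished by Proposition~\ref{prop:idem}, so the plan is to verify that all hypotheses of that construction are met and then simply quote it. First I would set $A = S_{Q,q}^\B(n',d)$, which is a finite-dimensional algebra over the field $K$, and take $e$ to be the idempotent constructed in the proof of Proposition~\ref{prop:idem} (using the combinatorial realization of Section~\ref{sec:comb}), so that $eAe \cong S_{Q,q}^\B(n,d)$ as $K$-algebras. Under this identification the exact Schur functor $\cF = e(-) \colon \Mod(A) \to \Mod(eAe)$ and its right adjoint $\cG = \Hom_A(Ae, -) \colon \Mod(eAe) \to \Mod(A)$ are exactly the functors appearing in \cite{DEN04}, and the general theorem there produces a first-quadrant Grothendieck spectral sequence
\[
E_2^{i,j} = \Ext^i_A(M, R^j\cG(N)) \Rightarrow \Ext^{i+j}_{eAe}(eM, N)
\]
for $M \in \Mod(A)$ and $N \in \Mod(eAe)$, which upon substituting $A = S_{Q,q}^\B(n',d)$ and $eAe = S_{Q,q}^\B(n,d)$ is precisely the asserted spectral sequence.

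The only remaining point is to identify the right-derived functors $R^j\cG$ with $\Ext^j_{S_{Q,q}^\B(n,d)}(eS_{Q,q}^\B(n',d), -)$. This is immediate from the definition $\cG(-) = \Hom_A(Ae, -)$: the left $A$-module $Ae$ is, under the isomorphism $eAe \cong S_{Q,q}^\B(n,d)$, the right $S_{Q,q}^\B(n,d)$-module $eS_{Q,q}^\B(n',d)$, and deriving $\Hom_{eAe}(eS_{Q,q}^\B(n',d), -)$ in the second variable gives $\Ext^j_{S_{Q,q}^\B(n,d)}(eS_{Q,q}^\B(n',d), -)$. (Here one uses that as an $(A, eAe)$-bimodule $Ae$ restricts, after applying $\cF$, to $eAe$ acting on the $eAe$-module $eAe \cdot$; the bookkeeping is identical to the type A case treated in \cite{DEN04} and to the Hecke-algebra case of the preceding theorem, where $\cG(-) = \Hom_{S^\B}(V^{\otimes d}, -)$ because $eS_{Q,q}^\B(n,d) \simeq V^{\otimes d}$ as a bimodule.)

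There is essentially no obstacle here; the content has already been done in Proposition~\ref{prop:idem} (existence of the idempotent with $eAe \cong S_{Q,q}^\B(n,d)$) and in \cite{DEN04} (the spectral sequence in the abstract setting of a finite-dimensional algebra and an idempotent). The mild point requiring care is that Proposition~\ref{prop:idem} gives an \emph{algebra} isomorphism $eAe \cong S_{Q,q}^\B(n,d)$ and one must transport the module categories and the functor $\cG$ along this isomorphism compatibly; but since an algebra isomorphism induces an exact equivalence of module categories commuting with all $\Ext$-groups, this is automatic. Thus the proof is: invoke Proposition~\ref{prop:idem} to obtain $e$ under hypothesis (a) or (b), apply the Doty--Erdmann--Nakano construction to the pair $(A, e) = (S_{Q,q}^\B(n',d), e)$, and rewrite $R^j\cG$ as the stated $\Ext$-functor.
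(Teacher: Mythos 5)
Your proposal is correct and follows exactly the route the paper intends: invoke Proposition~\ref{prop:idem} to produce the idempotent $e\in S_{Q,q}^{\B}(n',d)$ with $eS_{Q,q}^{\B}(n',d)e\cong S_{Q,q}^{\B}(n,d)$, and then apply the general Grothendieck spectral sequence of \cite{DEN04} for a finite-dimensional algebra with an idempotent, identifying $R^{j}\mathcal{G}$ with the stated $\Ext$-functor. The paper gives no further detail beyond this, so your write-up (including the careful remark about transporting $\mathcal{G}$ along the algebra isomorphism) is, if anything, slightly more explicit than the original.
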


\section{Cellularity}\label{sec:cell}
\subsection{Definition}
We start from recalling the definition of a cellular algebra following \cite{GL96}.  
A $K$-algebra $A$ is {\em cellular} if it is equipped with a cell datum $(\Lambda, M, C, *)$ consisting of a poset $\Lambda$, 
a map $M$ sending each $\lambda \in \Lambda$ to a finite set $M(\lambda)$, 
a map $C$ sending each pair $(\fs,\ft) \in M(\lambda)^2$ to an element $C_{\fs,\ft}^\lambda \in A$, 
and an $K$-linear involutory anti-automorphism $*$ satisfying the following conditions:
\enu
\item[(C1)]
The map $C$
is injective with image being an $K$-basis of $A$ (called a {\em cellular basis}).
\item[(C2)]
For any $\lambda \in \Lambda$ and $\fs, \ft \in M(\lambda)$,
$(C_{\fs, \ft}^{\lambda})^* = C_{\ft, \fs}^{\lambda}$.
\item[(C3)]
There exists
$r_a (\fs', \fs) \in K$ for $\lambda\in \Lambda, \fs, \fs' \in M(\lambda)$
such that for all $a \in A$ and $\fs, \ft \in M(\lambda)$,
\[
a  C_{\fs, \ft}^{\lambda} \equiv \sum_{\fs' \in M(\lambda)} r_a (\fs', \fs) C_{\fs', \ft}^{\lambda}
\mod A_{< \lambda}.
\] 
Here $A_{< \lambda}$ is the
$K$-submodule of $A$ generated by the set 
$
\{ C_{\fs'', \ft''}^{\mu} \mid \mu < \lambda; \fs'', \ft'' \in M(\mu) \}.
$
\endenu
For a cellular algebra $A$, we define for each $\lambda \in \Lambda$ a {\em cell module} $W(\lambda)$ spanned by $C_{\fs}^{\lambda}$, $\fs \in M(\lambda)$, with multiplication given by
\eq\label{eq:cellmod}
a C_\fs = \sum_{\fs'\in M(\lambda)} r_a(\fs',\fs) C_\fs'.
\endeq
For each $\lambda \in \Lambda$ we let 
$\phi_\lambda: W(\lambda) \times W(\lambda) \to K$ 
be a bilinear form satisfying
\eq\label{eq:phi_lambda}
C^\lambda_{\fs,\fs} C^\lambda_{\ft,\ft} \equiv \phi_\lambda(C_\fs, C_\ft) C^\lambda_{\fs,\ft} 
\mod A_{<\lambda}.
\endeq

It is known that the type A $q$-Schur algebras are always cellular, and there could be distinct cellular structures. 
See \cite{AST18} for a parallel approach on the cellularity of centralizer algebras for quantum groups.

\exa[Mathas]\label{C1}
Let $\Lambda = \Lambda^\A(d)$ be the set of all partitions of $d$, and let $\Lambda' = \Lambda'(d)$ be the set of all compositions of $d$. For each composition $\lambda \in \Lambda'$, let $\Sigma_\lambda$ be the corresponding Young subgroup of $\Sigma_d$. We set
\[
x_\lambda = \sum_{w \in \Sigma_\lambda} T_w \in \cH_q(\Sigma_d).
\]
It is known the $q$-Schur algebra admits the following combinatorial realization:
\[
S_q^\A(n,d) = \textup{End}_{\cH_q(\Sigma_d)}(\oplus_{\lambda\in\Lambda'}x_\lambda\cH_q(\Sigma_d))
=
\bigoplus_{\lambda,\mu\in\Lambda'} \textup{Hom}_{\cH_q(\Sigma_d)}(x_\mu\cH_q(\Sigma_d),x_\lambda\cH_q(\Sigma_d)).
\]
The finite set $M(\lambda)$ is given by $M(\lambda)  = \bigsqcup_{\mu\in\Lambda'} \textup{SSTD}(\lambda,\mu)$, where
\eq\label{eq:C1M}
\textup{SSTD}(\lambda,\mu)  = \{\textup{semi-standard }\lambda\textup{-tableaux of shape }\mu\}.
\endeq
For $\mu \vdash d$, denote the set of shortest right coset representatives for $\Sigma_{\mu}$ in $\Sigma_{d}$ by
\eq
D_\mu =\{ w\in \Sigma_d ~|~ \ell(gw) = \ell(w) + \ell(g) \textup{~for all~} g\in \Sigma_\mu\}.
\endeq
Let $\ft^\lambda$ be the canonical $\lambda$-tableau of shape $\lambda$, then for all $\lambda$-tableau $\ft$ there is a unique element $d(\ft) \in D_\lambda$ such that $\ft d(\ft) = \ft$.
The cellular basis element, for $\lambda \in \Lambda, \fs \in \textup{sstd}(\lambda, \mu), \ft \in \textup{sstd}(\lambda,\nu)$, is the given by
\eq\label{eq:C1C}
C_{\fs,\ft}^\lambda(x_\alpha h) = \delta_{\alpha, \mu} \sum_{s,t}T_{d(s)^{-1}} x_\lambda T_{d(t)} h,
\endeq
where the sum is over all pairs $(s,t)$ such that $\mu(s) = \fs, \nu(t) = \ft$.
\endexa
\exa[Doty-Giaquinto]
The poset $\Lambda$ is the same as in Example~\ref{C1}, and we have $\Lambda = \Sigma_d \Lambda^+$.
It is known that the algebra $S_q^\A(n,d)$ admits a presentation with generators $E_i, F_i (1\leq i\leq n-1)$ and $1_\lambda (\lambda \in \Lambda)$. The map $*$ is the anti-automorphism satisfying
\[
E_i^* = F_i,
\quad
F_i^* = E_i,
\quad
1_\lambda^* = 1_\lambda.
\]
For each $\lambda \in \Lambda$ we set
$\Lambda^+_\lambda = \{\mu \in \Lambda^+ ~|~ \mu \leq \lambda\}$.
Note that $\Lambda^+_\lambda$ is saturated and it defines a subalgebra $S_q(\Lambda^+_\lambda)$ of $S_q^\A(n,d)$ with a basis $\{\={x}_\fs~|~ 1 \leq \fs \leq d_\lambda\}$ for some $d_\lambda \in \NN$.
Let $x_\fs \in S_q^\A(n,d)^-$ be the preimage of $\={x}_\fs$ under the projection $S_q^\A(n,d) \to S_q(\Lambda^+_\lambda)$ that is the identity map except for that it kills all $1_\mu$ where $\mu \not\leq \lambda$.
The finite set $M(\lambda)$ is given by
\eq\label{eq:C2M}
M(\lambda) = \{1, 2, \ldots, d_\lambda\}.
\endeq
Finally, for $\lambda \in \Lambda, \fs, \ft \in M(\lambda)$, we set
\eq\label{eq:C2C}
C_{\fs,\ft}^\lambda = 
x_\fs 1_\lambda x_\ft^*.
\endeq

\endexa
\subsection{Cellular structures on $S_{Q,q}^\B(n,d)$}
We show that the isomorphism theorem produces a cellular structure for $S_{Q,q}^\B(n,d)$ using any cellular structure on the $q$-Schur algebras of type A.
For any $n,d$ we fix a cell datum $(\Lambda_{n,d}, M_{n,d}, C_{n,d}, *)$ for $S_q^\A(n,d)$. 
Define 
\eq\label{def:LB}
\Lambda^\B = \Lambda^\B(n,d) = 
\bigsqcup_{i=0}^d 
\Lambda_{\nhc ,i} \times \Lambda_{\nhf ,d-i},
\endeq
as a poset with the lexicographical order.
For $\lambda = (\lone, \ltwo) \in \Lambda^\B$, we define $M^\B$ by
\eq
M^\B(\lambda) = \bigsqcup_{i=0}^d M_{\nhc,i}(\lone) \times M_{\nhf,d-i}(\ltwo).
\endeq
The map $C^\B$ is given by, for $\fs = (\fsone, \fstwo), \ft = (\ftone,\fttwo) \in M_{\nhc,i}(\lone) \times M_{\nhf,d-i}(\ltwo) \subset M^\B(\lambda)$,
\eq
(C^\B)^\lambda_{\fs,\ft}=
(C_{\nhc,i})^{\lone}_{\fsone,\ftone} \otimes (C_{\nhf,d-i})^{\ltwo}_{{\fstwo, \fttwo}}.
\endeq
Finally, the map $*$ is given by
\eq\label{eq:B*}
*:
(C_{\nhc,i})^{\lone}_{\fsone,\ftone} \otimes (C_{\nhf,d-i})^{\ltwo}_{\fstwo, \fttwo}
\mapsto
(C_{\nhc,i})^{\lone}_{\ftone,\fsone} \otimes (C_{\nhf,d-i})^{\ltwo}_{\fttwo, \fstwo}.
\endeq
\cor
If the invertibility condition in Theorem~\ref{thm:SBA} holds, then
$S_{Q,q}^\B(n,d)$ is a cellular algebra with cell datum $(\Lambda^\B, M^\B, C^\B, *)$.
\endcor
\proof
Condition (C1) follows from the isomorphism theorem; while
Condition (C2) follows directly from \eqref{eq:B*}. 
Condition (C3) follows from the  type A  cellular structure as follows: for 
$a_1 \in S_q^\A(\nhc,i)$ and $a_2 \in S_q^\A(\nhf,d-i)$,  
\begin{align*}
a_1  (C_{\nhc,i})_{\fsone, \ftone}^{\lone} 
&\equiv \sum_{\fuone \in M_{\nhc,i}(\lone)} 
r^{(1)}_{a_1} (\fuone, \fsone) 
(C_{\nhc,i})_{\fuone, \ftone}^{\lone}
\mod S_q^\A(\nhc,i)(< \lone),
\\
a_2  (C_{\nhf,d-i})_{\fstwo, \fttwo}^{\ltwo} 
&\equiv \sum_{\futwo \in M_{r,d-i}(\ltwo)} 
r^{(2)}_{a_2} (\futwo, \fstwo) 
(C_{\nhf,d-i})_{\futwo, \fttwo}^{\ltwo}
\mod S_q^\A(\nhf,d-i)(< \ltwo).
\end{align*}
That is, for $a = a_1 \otimes a_2 \in S_q^\A(\nhc,i) \otimes S_q^\A(\nhf,d-i) \subset S_q^\B(n,d)$ we have
\[
a(C^\B)^\lambda_{\fs,\ft}
\equiv \sum_{\substack{\fu= (\fuone,\futwo) \\ \in M_{\nhc,i}(\lone)\times M_{\nhf,d-i}(\ltwo)}} 
r^\B_a(\fu, \fs)
(C^\B)^\lambda_{\fu,\ft}
\mod S_q^\B(n,d)(< \lambda),
\]
where $r^\B_a(\fu, \fs) = r^{(1)}_{a_1} (\fuone, \fsone) r^{(2)}_{a_2} (\futwo, \fstwo)$ is independent of $\ft$.
\endproof
\section{Quasi-Hereditary Structure}
\subsection{Definition}
Following \cite{CPS88},
a $K$-algebra $A$ is called {quasi-hereditary} if there is a chain of two-sided ideals of $A$:
\[
0 \subset I_1 \subset I_2 \subset \ldots \subset I_n = A
\]
such that each quotient $J_j = I_j/I_{j-1}$ is a hereditary ideal of $A/I_{j-1}$.
It is known \cite{GL96} that if $A$ is cellular and $\phi_\lambda \neq 0$ (cf. \eqref{eq:phi_lambda}) for all $\lambda \in \Lambda$ then $A$ is quasi-hereditary.
%

An immediate corollary of our isomorphism theorem is that $S_{Q,q}^\B(n,d)$ is quasi-hereditary under the invertibility condition.  
We conjecture that this is a sufficient and necessary condition and provide some evidence for small $n$.

\begin{cor}
If the invertibility condition in Theorem~\ref{thm:SBA} holds, then
$S_q^\B(n,d)$ is quasi-hereditary.
\end{cor}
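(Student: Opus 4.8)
The plan is to deduce quasi-hereditariness directly from the cellular structure established in the previous corollary, using the criterion of Graham--Lenzing quoted just above: a cellular algebra $A$ with cell datum $(\Lambda,M,C,*)$ is quasi-hereditary provided the bilinear form $\phi_\lambda$ of \eqref{eq:phi_lambda} is nonzero for every $\lambda\in\Lambda$. So the only thing to check is that each $\phi^\B_\lambda$ is nonzero on the cell module $W^\B(\lambda)$, for every $\lambda=(\lone,\ltwo)\in\Lambda^\B$.

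First I would recall that the type A $q$-Schur algebra $S_q^\A(m,e)$ is itself known to be quasi-hereditary (for all $m,e$), and that under the fixed cell datum this is witnessed by $\phi^{(A)}_{\nu}\neq 0$ for every $\nu$ in the type A poset; this is part of the standard cellular theory of $q$-Schur algebras (e.g.\ the Dipper--James/Mathas construction) and may be assumed. Next I would observe that the cell datum for $S^\B_{Q,q}(n,d)$ constructed in Section~\ref{sec:cell} is, via the isomorphism $\Phi$ of Theorem~\ref{thm:SBA}, a ``tensor product'' cell datum: the basis element $(C^\B)^\lambda_{\fs,\ft}$ is $(C_{\nhc,i})^{\lone}_{\fsone,\ftone}\otimes (C_{\nhf,d-i})^{\ltwo}_{\fstwo,\fttwo}$ inside the summand $S_q^\A(\nhc,i)\otimes S_q^\A(\nhf,d-i)$, and the order on $\Lambda^\B$ restricts on each summand to the product order. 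Consequently the cell module $W^\B(\lambda)$ is identified with $W^{(A)}(\lone)\otimes W^{(A)}(\ltwo)$ and, chasing the defining congruence \eqref{eq:phi_lambda} through the ring isomorphism $\Phi$ and the fact that multiplication in a tensor product of algebras is componentwise, the form factorizes:
\eq
\phi^\B_\lambda = \phi^{(A)}_{\lone}\otimes \phi^{(A)}_{\ltwo},
\quad\text{i.e.}\quad
\phi^\B_\lambda\big(C_{\fs},C_{\ft}\big)=\phi^{(A)}_{\lone}\big(C_{\fsone},C_{\ftone}\big)\,\phi^{(A)}_{\ltwo}\big(C_{\fstwo},C_{\fttwo}\big).
\endeq
Since a tensor product of two nonzero bilinear forms over a field is nonzero, $\phi^\B_\lambda\neq 0$ for all $\lambda$, and the Graham--Lenzing criterion gives that $S^\B_{Q,q}(n,d)$ is quasi-hereditary; specializing $Q=q$ yields the statement as written.

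The step I expect to require the most care is the verification that the bilinear form genuinely factorizes, i.e.\ that the ``lower terms'' $A_{<\lambda}$ appearing in \eqref{eq:phi_lambda} for $S^\B_{Q,q}(n,d)$ are controlled by the product of the two type A filtrations. This amounts to checking that if $\mu=(\mone,\mtwo)<\lambda=(\lone,\ltwo)$ in the lexicographic/product order on $\Lambda^\B$ then the corresponding basis elements lie in $S_q^\A(\nhc,i)(<\lone)\otimes S_q^\A(\nhf,d-i) + S_q^\A(\nhc,i)\otimes S_q^\A(\nhf,d-i)(<\ltwo)$ — together with the orthogonality of distinct direct summands under $\Phi$, which kills all cross terms between different $i$. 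Once this bookkeeping is done the factorization of $\phi^\B_\lambda$ is immediate, and everything else is a formal appeal to results already in the excerpt (Theorem~\ref{thm:SBA}, the cellularity corollary, and the quoted Graham--Lenzing theorem).
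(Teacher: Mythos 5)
Your proposal is correct and follows essentially the same route as the paper: both verify the quoted criterion of \cite{GL96} by showing that the cellular bilinear form factorizes as $\phi^\B_\lambda(C_{\fs},C_{\ft})=\phi^\A_{\lone}(C_{\fsone},C_{\ftone})\,\phi^\A_{\ltwo}(C_{\fstwo},C_{\fttwo})$ through the tensor-product cell datum supplied by Theorem~\ref{thm:SBA}, and then invoke the nonvanishing of the type A forms. (The criterion is due to Graham--Lehrer, not ``Graham--Lenzing''; your bookkeeping about the lower terms $A_{<\lambda}$ is exactly the content of the paper's displayed congruence, using that the lexicographic order on $\Lambda^\B$ refines the product of the type A orders.)
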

\proof
Let $\phi^\A_\nu$ with $\nu \in \Lambda_{r,j}$ be such a map for $S_q^\A(r,j)$.
Fix $\lambda = (\lone, \ltwo) \in \Lambda_{\nhc,i}\times \Lambda_{\nhf,d-i}\subset \Lambda^\B$,
$\fs =(\fsone, \fstwo), \ft = (\ftone,\fttwo) \in M_{\nhc,i}(\lone)\times M_{\nhf,d-i}(\ltwo) \subset M^\B(\lambda)$, we have
\begin{align*}
C^\lambda_{\fs,\fs} C^\lambda_{\ft,\ft} 
&=
(C_{\nhc,i})^{\lone}_{\fsone,\fsone} (C_{\nhc,i})^{\lone}_{\ftone,\ftone}  \otimes (C_{\nhf,d-i})^{\ltwo}_{{\fstwo, \fstwo}} (C_{\nhf,d-i})^{\ltwo}_{{\fttwo, \fttwo}}
\\
&\equiv \phi^\A_{\lone}(C_\fsone, C_\ftone)\phi^\A_{\ltwo}(C_\fstwo, C_\fttwo) C^\lambda_{\fs,\ft} 
\mod S_q^\B(n,d)(<\lambda).
\end{align*}
\endproof
Recall that in Proposition~\ref{prop:SB(2,1)} we see that $S_{Q,q}^\B(2,1) \simeq \cH^\A_{Q^{-1}}(\Sigma_2)$. 
In the following we show that the known cellular structure (due to Geck/Dipper-James) fails when $f^\B = Q^{-2} + 1$ is not invertible. 
\exa
Let $S_{Q,q}^\B(2,1) \simeq \cH_{Q^{-1}}^\A(\Sigma_2) = K[t]/\langle t^2 - (Q\inv - Q)t + 1\rangle$. We have
\[
\Lambda = \left\{\lambda = \iyng{2} \rhd \mu = \iyng{1,1}\right\},
\quad
M(\lambda) = \{\ft = \iyoung{12}\},
M(\mu) = \left\{\fs = \iyoung{1,2}\right\}.
\]
The cellular basis elements are
\[
C^\lambda_{\ft\ft} = \sum_{w \in \Sigma_2} Q^{-\ell(w)} T_w = 1 + Q\inv t,
\quad
C^\mu_{\fs\fs} = \sum_{w \in \Sigma_1 \times \Sigma_1} Q^{-\ell(w)} T_w = 1.
\]
Firstly, we have
$C^\mu_{\fs\fs}C^\mu_{\fs\fs} = 1 = C^\mu_{\fs\fs}$ and hence $\phi_\mu$ is determined by $\phi_\mu(C_\fs, C_\fs) = 1$, which is nonzero.
For $\lambda$ we have
\[
C^\lambda_{\ft\ft}C^\lambda_{\ft\ft}
= 1 - Q^{-2} + (Q^{-2}+1)Q\inv t \equiv (Q^{-2}+1)C^\lambda_{\ft\ft} \mod A_{<\lambda}.
\]
That  is, $\phi_\lambda$  is determined by $\phi_\mu(C_\ft, C_\ft) = (Q^{-2}+1)$, which can be zero when $f^\B = Q^{-2} + 1 = 0$.
Therefore, $S_{Q,q}^\B(2,1)$ is not quasi-hereditary in an explicit way. 
\endexa

One can also see that $S_{Q,q}^\B(2,1)$ is not quasi-hereditary because 
if it were then it would have finite global dimension. However, $\cH_{Q^{-1}}^\A(\Sigma_2)$ is a Frobenius algebra with infinite global dimension.  
\begin{conj}\label{conj:QHB}
The algebra $S_{Q,q}^\B(n,d)$ is quasi-hereditary if and only if $f^\B_d(Q,q)$ is invertible.
\end{conj}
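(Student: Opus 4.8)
The plan is to treat the two implications separately. The forward direction is already in hand: when $f^\B_d(Q,q)$ is invertible, Theorem~\ref{thm:SBA} identifies $S^\B_{Q,q}(n,d)$ with $\bigoplus_{i=0}^d S_q^\A(\nhc,i)\otimes S_q^\A(\nhf,d-i)$, a finite direct sum of tensor products of type A $q$-Schur algebras; each factor is quasi-hereditary (being cellular with all $\phi^\A_\nu\neq 0$), tensor products and finite direct sums of quasi-hereditary $K$-algebras are quasi-hereditary, and so is $S^\B_{Q,q}(n,d)$ --- this is the Corollary above. The remaining, and essential, content is the converse: if $f^\B_d(Q,q)$ is not invertible, then $S^\B_{Q,q}(n,d)$ is not quasi-hereditary. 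Since a quasi-hereditary algebra has finite global dimension, it suffices to show that in this case $S^\B_{Q,q}(n,d)$ has infinite global dimension.

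I would first reduce to $n=2$. Proposition~\ref{prop:idem} --- part~(a) for even $n$, and part~(b) with $r=1$ for odd $n$ --- exhibits $S^\B_{Q,q}(2,d)$ as $e\,S^\B_{Q,q}(n,d)\,e$ for an explicit idempotent $e=\sum_\gamma\phi^1_{\gamma\gamma}$, the sum taken over signed compositions $\gamma$ supported in the two central coordinates. Using Lemma~\ref{lem:SBmult} and the dominance order on $\Lambda^\B(n,d)$ underlying the realization of \cite{BKLW,LL18}, one should check that this index set is saturated (a coideal): ``supported in few coordinates'' is upward closed under dominance, exactly as ``having few parts'' is for ordinary partitions. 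Idempotent truncation along a coideal-type idempotent preserves quasi-heredity, so if $S^\B_{Q,q}(n,d)$ were quasi-hereditary then so would be $S^\B_{Q,q}(2,d)$; equivalently, it suffices to prove that $S^\B_{Q,q}(2,d)$ is not quasi-hereditary whenever $f^\B_d(Q,q)$ is not invertible.

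For the base case I would establish that the $(d+1)$-dimensional algebra $S^\B_{Q,q}(2,d)$ is commutative and isomorphic to $K[t]/\langle P_d(t)\rangle$ for an explicit monic $P_d$ of degree $d+1$ (such a presentation is anticipated in the remark after Proposition~\ref{prop:SB(2,1)}), computing $P_d$ recursively from the $S^\B_{Q,q}(2,d)$-action on $V^{\otimes d}$ or from the comultiplication on $K[M^\B_{Q,q}(2)]_d$ as in Proposition~\ref{prop:SB(2,1)}. A truncation $K[t]/\langle P_d(t)\rangle$ is quasi-hereditary iff it is semisimple iff $\operatorname{disc}(P_d)\neq 0$, so it remains to show that $\operatorname{disc}(P_d)$ agrees with $f^\B_d(Q,q)$ up to a unit of $K$: for $d=1$ this is $\operatorname{disc}(t^2-(Q-Q\inv)t-1)=(Q+Q\inv)^2$, which vanishes precisely when $Q^{-2}+1=f^\B_1(Q,q)=0$, matching Proposition~\ref{prop:SB(2,1)}, while for general $d$ one would exhibit the roots of $P_d$ as Laurent monomials in $Q,q$ and determine exactly when two of them collide. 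The main obstacle is precisely this base case: outside the invertibility locus Theorem~\ref{thm:SBA} and everything built on it are unavailable, so one needs a genuinely new handle on $S^\B_{Q,q}(2,d)$, namely both the identification with $K[t]/\langle P_d\rangle$ and the evaluation of $\operatorname{disc}(P_d)$. A route that would sidestep the reduction step is instead to construct a cellular basis of $S^\B_{Q,q}(n,d)$ valid for \emph{all} parameters (a Dipper--James--Mathas-type basis adapted to the coideal structure) and to show that the form determinants $\phi_\lambda$ of \eqref{eq:phi_lambda} are all nonzero precisely when $f^\B_d(Q,q)$ is invertible; combined with the criterion of \cite{GL96} recalled above (and its converse for cellular algebras) this would yield both directions simultaneously --- but it likewise requires controlling the algebra in the degenerate regime where the clean type A reduction breaks down.
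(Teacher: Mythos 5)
First, be aware that the statement you are proving is stated in the paper as Conjecture~\ref{conj:QHB}: the paper proves only the ``if'' direction (as the Corollary immediately preceding the conjecture, by exactly the route you describe), and offers the single case $(n,d)=(2,1)$ as evidence for the converse. There is therefore no proof in the paper to compare against; what you have written is a plan for the genuinely open half, and the useful thing to record is where that plan has real gaps.

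The first gap is the reduction to $n=2$. The isomorphism $S^\B_{Q,q}(2,d)\cong eS^\B_{Q,q}(n,d)e$ from Proposition~\ref{prop:idem} does not by itself let you pass non-quasi-heredity up from the corner algebra: an idempotent truncation of a quasi-hereditary algebra need not be quasi-hereditary (the paper's own $e^\B S^\B_{Q,q}(n,d)e^\B\cong\cH^\B_{Q,q}(d)$ from Proposition~\ref{prop:SFB} is the standard counterexample), and the coideal criterion you invoke requires the truncation set to be saturated with respect to the weight poset of an \emph{actual} heredity chain on $S^\B_{Q,q}(n,d)$. In the regime where $f^\B_d(Q,q)$ is not invertible you do not know that such a chain exists, let alone which partial order it would induce, so ``if $S^\B_{Q,q}(n,d)$ were quasi-hereditary then so would be $S^\B_{Q,q}(2,d)$'' is not justified as stated; you would need either to show that any heredity chain is compatible with the dominance order, or to replace this step by an argument (e.g.\ a global-dimension or Cartan-determinant comparison) that genuinely transfers between $A$ and $eAe$. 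The second gap is the base case, which you acknowledge: the identification $S^\B_{Q,q}(2,d)\cong K[t]/\langle P_d(t)\rangle$ is only an expectation recorded in the remark after Proposition~\ref{prop:SB(2,1)}, and neither $P_d$ nor its discriminant is computed anywhere for $d\geq 2$; the claim that $\operatorname{disc}(P_d)$ vanishes exactly on the zero locus of $f^\B_d(Q,q)=\prod_{i=1-d}^{d-1}(Q^{-2}+q^{2i})$ is plausible and consistent with $d=1$, where $\operatorname{disc}=(Q+Q\inv)^2=Q^2f^\B_1(Q,q)^2$, but it is precisely the content that would have to be supplied. Your ``if'' direction is correct and coincides with the paper's argument; the converse remains open, and your sketch does not yet close it.
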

\section{Representation Type} \label{sec:Rep}

\subsection{} Let $A$ be a finite-dimensional algebra over a field $K$. A fundamental question one can ask about $A$ is to describe its representation type. 
The algebra $A$ is {\em semisimple} if and only if every finite-dimensional module (i.e., $M\in \text{mod}(A)$) is a direct sum of simple modules. 
This means that indecomposable modules for $A$ are simple. If $A$ admits finitely many finite-dimensional indecomposable modules, $A$ is 
said to be of {\em finite representation type}. If $A$ does not have finite representation type $A$ is of {\em infinite representation type}. 

A deep theorem of Drozd states that finite dimensional algebras of infinite representation type can be split into two mutually exclusive categories: tame or wild. 
An algebra $A$ has {\em tame representation type} if for each dimension there exists finitely many one-parameter families of indecomposable objects in $\text{mod}(A)$. 
The indecomposable modules for algebras of tame representation type are classifiable. On the other hand, the algebras of {\em wild representation type} are those whose 
representation theory is as difficult to study as the representation theory of the free associative algebra $k\langle x, y \rangle$ on two variables.  Classifying the finite-dimensional $k\langle x,y \rangle$-modules 
is very much an open question. 

\subsection{Summary: Type A Results} \label{S:Type A-reptype} The following results from \cite[Theorem 1.3(A)-(C)]{EN01} summarize 
the representation type for the $\bar{q}$-Schur algebra for type $\A$ over $K$. Assume that $p = \textup{char}(K)$, $\bar{q} \in K^{\times}$ has multiplicative order $l$ and $\bar{q} \neq 1$. 

\begin{thm} \label{T:TypeAss}
The algebra $S^{\A}_{\bar{q}}(n,r)$ is semisimple if and only if one of the following holds:
\begin{itemize}
\item[(i)]  $n=1$;
\item[(i)]  $\bar{q}$ is not a root of unity;
\item[(ii)]  $\bar{q}$ is a primitive $l$th root of unity and $r<l$;
\item[(iii)] $n=2$, $p=0$, $l=2$ and $r$ is odd;
\item[(iv)] $n=2$, $p\geq 3$, $l=2$ and $r$ is odd with $r < 2p+1$.
\end{itemize}
\end{thm}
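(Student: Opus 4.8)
The plan is to turn semisimplicity into a statement about $q$-Weyl modules, strip off the ``generic'' cases by passing to the Hecke algebra, and then run the Jantzen sum formula, with $n=2$, $l=2$ singled out as the hard case. First I would use that $S^\A_{\bar q}(n,r)$ is quasi-hereditary with standard modules the $q$-Weyl modules $\Delta(\lambda)$, indexed by the partitions $\lambda$ of $r$ into at most $n$ parts. For a quasi-hereditary algebra, semisimplicity is equivalent to $\Delta(\lambda)=L(\lambda)$ for all $\lambda$, equivalently to every block containing exactly one irreducible module; so the problem becomes to decide precisely when no $\Delta(\lambda)$ has a proper submodule. The case $n=1$ is immediate, since $S^\A_{\bar q}(1,r)\cong K$.

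Next I would dispose of the cases that are forced by the Hecke algebra. Recall that $\cH_{\bar q}(\Sigma_r)$ is semisimple exactly when $\bar q$ is not a root of unity, or $\bar q$ is a primitive $l$-th root of unity with $r<l$. In either case $\bigoplus_{\mu}x_\mu\cH_{\bar q}(\Sigma_r)$ is a semisimple $\cH_{\bar q}(\Sigma_r)$-module, hence its endomorphism algebra $S^\A_{\bar q}(n,r)$ is semisimple; this yields the ``if'' direction for the first two non-trivial conditions, for every $n$. The remaining semisimple cases in the list (both with $n=2$, $l=2$, $r$ odd) are not visible from the Hecke algebra and must be established hand in hand with the converse.

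For the converse, and for the bookkeeping when $n\geq 3$, I would invoke the Jantzen sum formula for $q$-Schur algebras: $\Delta(\lambda)$ is irreducible if and only if the Jantzen sum $\sum_{j\geq 1}\operatorname{ch}\Delta(\lambda)^{j}$ vanishes, and this sum is governed by the $l$-adic (respectively $(l,p)$-adic) valuations of the hook-length data of $\lambda$. When $r\geq l$ and $n\geq 3$ one exhibits an explicit partition of $r$ with at most $n$ parts --- for instance one obtained from its $l$-core by attaching a single $l$-rim hook away from the core, or an appropriate three-row shape --- whose Jantzen sum is nonzero; a similar explicit two-row choice handles $n=2$ with $l\geq 3$ and $r\geq l$. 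Non-semisimplicity then propagates upward in $n$: there is an idempotent $e\in S^\A_{\bar q}(n,r)$ with $eS^\A_{\bar q}(n,r)e\cong S^\A_{\bar q}(n',r)$ for $n'\leq n$, and $eAe$ is semisimple whenever $A$ is, so failure of semisimplicity for a smaller Schur algebra forces it for the larger one.

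The main obstacle is the family $n=2$, $l=2$ (where necessarily $p\neq 2$, since $\bar q=-1\neq 1$), in which one must pin down the exact semisimple region, namely $r$ odd together with ($p=0$ or $r<2p+1$). Here I would exploit that the module category of $S^\A_{\bar q}(2,r)$ is, up to a determinant twist, that of a sum of blocks of quantum $\mrm{SL}_2$, so that semisimplicity is equivalent to the irreducibility of the quantum $\mrm{SL}_2$-Weyl modules $\Delta_m$ with $0\leq m\leq r$ and $m\equiv r\pmod 2$. Using the Steinberg tensor product $L_m\cong L_{m_0}\otimes\big(L^{\mathrm{cl}}_{m_1}\big)^{[2]}$, where $m=m_0+2m_1$ with $m_0\in\{0,1\}$ and $L^{\mathrm{cl}}$ is the classical $\mrm{SL}_2$-simple in characteristic $p$, one sees that for even $m\geq 2$ the equality $\Delta_m=L_m$ already fails on dimension grounds, whereas for odd $m$ it reduces to the simplicity of the classical characteristic-$p$ Weyl module $\Delta^{\mathrm{cl}}(m_1)$ for $\mrm{SL}_2$. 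Thus: if $r$ is even one reads off non-semisimplicity from $\Delta_2$; if $r$ is odd and $p=0$ then every relevant $\Delta^{\mathrm{cl}}(m_1)$ is simple, so $S^\A_{\bar q}(2,r)$ is semisimple; and if $r$ is odd and $p\geq 3$ then, since the smallest highest weight at which a classical $\mrm{SL}_2$-Weyl module fails to be simple is $p$, all the relevant $\Delta^{\mathrm{cl}}(m_1)$ with $0\leq m_1\leq(r-1)/2$ are simple precisely when $(r-1)/2<p$, i.e.\ $r<2p+1$. The genuinely technical point --- absorbing essentially all of the case analysis --- is this last translation of the characteristic-$p$ Frobenius-kernel obstruction into the numerical bound $r<2p+1$; once it is in place, combining it with the idempotent reduction of the previous paragraph completes the theorem.
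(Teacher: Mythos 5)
This theorem is not proved in the paper at all: it is quoted verbatim from Erdmann--Nakano \cite[Theorem 1.3(A)]{EN01} (with the classical antecedents \cite{DN98, DEMN99}), so there is no in-paper argument to measure your proposal against. Judged on its own terms, your sketch is correct in outline and is in fact close in spirit to how the cited sources proceed: reduce semisimplicity to irreducibility of the $q$-Weyl modules via quasi-heredity, get the ``generic'' semisimple cases for free from semisimplicity of $\cH_{\bar q}(\Sigma_r)$ (the endomorphism algebra of a semisimple module is semisimple), propagate non-semisimplicity upward in $n$ through the idempotent truncation $S^\A_{\bar q}(n',r)\cong eS^\A_{\bar q}(n,r)e$, and settle $n=2$ by passing to quantum $\mathrm{SL}_2$ and the Steinberg tensor product. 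Your $n=2$, $l=2$ analysis is sound: the dimension count for even highest weights, the reduction of odd highest weights to simplicity of classical $\mathrm{SL}_2$-Weyl modules $\Delta^{\mathrm{cl}}(m_1)$ with $0\le m_1\le (r-1)/2$, and the translation of ``first failure at $m_1=p$'' into the bound $r<2p+1$ all check out, and you correctly note $p\ne 2$ is automatic there.

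The one place you should be more careful is the non-semisimple region with $n\ge 3$ and $r\ge l$. You rightly observe that upward propagation from $n=2$ does not suffice (when $l=2$ and $r$ is odd, $S^\A_{\bar q}(2,r)$ may itself be semisimple), but the existence of a reducible Weyl module for a genuinely three-row shape is only asserted via the Jantzen sum formula, not exhibited; this is precisely where \cite{EN01, DN98} do the concrete work, e.g.\ by combining the base cases with the determinant-twist stability $S^\A_{\bar q}(n,r)$ non-semisimple $\Rightarrow S^\A_{\bar q}(n,r+n)$ non-semisimple (the analogue of \cite[Proposition 2.4B]{EN01} recorded in Section~\ref{sec:Rep}), which reduces everything to finitely many explicit $(n,r)$. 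Supplying those explicit reducible Weyl modules (or invoking the determinant-twist reduction) would close the only real gap in the write-up.
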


\begin{thm} \label{T:finitetypeA} 
The algebra $S^{\A}_{\bar{q}}(n,r)$  has finite representation type but
is not semi-simple
if and only if $\bar{q}$ is a primitive $l$th root of unity
with $l \leq  r$,
and one of the following holds:
\begin{itemize}
\item[(i)]  $n \geq 3$ and $r < 2l$;
\item[(ii)] $n=2$, $p\neq 0$, $l\geq 3$ and  $r<lp$;
\item[(iii)] $n=2$, $p=0$ and either $l\geq 3$,  or $l=2$ and $r$ is even;
\item[(iv)] $n=2$, $p \geq 3$, $l=2$ and $r$ even with $r<2p$, or
$r$ is odd with $2p+1 \leq r < 2p^2+1$.
\end{itemize}
\end{thm}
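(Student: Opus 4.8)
The statement is recalled from \cite[Theorem~1.3]{EN01}, so the ``proof'' is a reference; I sketch how one establishes such a classification. The plan is to reduce the problem to a single block of $S^\A_{\bar{q}}(n,r)$ and to read off the representation type of a block from its \emph{weight}.

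First I would use that $S^\A_{\bar{q}}(n,r)$ is quasi-hereditary and cellular and that (finite) representation type is a block-wise property, reducing the question to the individual blocks, which are themselves quasi-hereditary. Next I would invoke the linkage principle and the block classification of James--Mathas: the blocks of $S^\A_{\bar{q}}(n,r)$ are indexed by $l$-cores together with a nonnegative \emph{weight} $w$, subject to the constraint that only partitions with at most $n$ parts occur; moreover two blocks of equal weight (and the same $n,l,p$-data) are derived equivalent via Scopes/Chuang--Rouquier moves, hence have the same representation type. So the representation type of a block is a function of $w$ alone, once $n,l,p$ are fixed.

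Then I would treat the small weights. A weight-$0$ block is simple; a weight-$1$ block is a Brauer tree algebra --- in fact a Nakayama (serial) algebra --- and so is representation-finite; and a block of weight $\ge 2$ is representation-infinite, which one proves by exhibiting a representation-infinite subquotient (for instance a local algebra whose radical-square-zero quotient has at least two loops, produced from two standard or costandard modules with large $\Ext^1$); the tame/wild refinement is then a further analysis of the weight-$\ge 2$ blocks. Finally I would translate ``every block has weight $\le 1$, at least one with weight exactly $1$'' into the numerical list. For $n \ge 3$ one checks that $S^\A_{\bar{q}}(n,r)$ has a block of weight $\ge 2$ exactly when $r \ge 2l$ (e.g.\ the block of $(r)$ has weight $\lfloor r/l\rfloor$), which gives (i). For $n = 2$ only partitions with at most two parts occur, so the blocks are truncated; here one works with the explicit basic algebra of $S^\A_{\bar{q}}(2,r)$, which remains serial in characteristic $0$ for all $r$ when $l \ge 3$, and for even $r$ when $l = 2$ --- giving (iii) --- but in characteristic $p>0$ only up to $r<lp$, respectively the thresholds $2p$ and $2p^2$ when $l=2$, giving (ii) and (iv). Subtracting the semisimplicity list (Theorem~\ref{T:TypeAss}) leaves exactly the ``finite but not semisimple'' cases.

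The hardest step will be the $n=2$, $p>0$ analysis: one has to compute the quiver with relations of the blocks of $S^\A_{\bar{q}}(2,r)$ in the borderline range and pin down the precise $r$ at which a representation-infinite (indeed wild) local configuration first appears. This is the source of the exact inequalities $r<lp$ and $2p+1\le r<2p^2+1$, and it requires the most delicate bookkeeping; everything else is structural.
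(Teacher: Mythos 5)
Your identification of the statement as a direct quotation of \cite[Theorem~1.3]{EN01} is exactly what the paper does: it offers no proof of its own, only the citation (the surrounding text explicitly says these results ``summarize the representation type \dots from [EN01, Theorem 1.3(A)-(C)]''). Your supplementary sketch of the block-by-weight strategy is a reasonable outline of how such a classification is established and does not conflict with anything in the paper, so the proposal matches the paper's approach.
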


\begin{thm}  
The algebra $S^{\A}_{\bar{q}}(n,r)$  has tame representation type
if and only if $\bar{q}$ is a primitive $l$th root of unity  and one of the following holds:
\begin{itemize}
\item[(i)] $n=3$, $l=3$,  $p\neq 2$ and $r=7,8$;
\item[(ii)] $n=3$, $l=2$ and $r=4,5$;
\item[(iii)] $n=4$, $l=2$ and $r=5$;
\item[(iv)] $n=2$, $l\geq 3$, $p=2$ or $p=3$ and $pl\leq r < (p+1)l$;
\item[(v)] $n=2$, $l=2$, $p=3$ and $r\in \{ 6,  19, 21, 23 \}$.
\end{itemize}
\end{thm}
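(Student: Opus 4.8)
The statement is a restatement of \cite[Theorem~1.3(C)]{EN01} (with $\bar q$ the Hecke parameter in the normalization of \emph{loc.\ cit.}), recorded here so that, together with Theorems~\ref{T:TypeAss} and~\ref{T:finitetypeA} and the isomorphism theorem (Theorem~\ref{thm:SBA}), it can drive the determination of the representation type of $S^\B_{Q,q}(n,d)$ in Section~\ref{sec:Rep}. The plan is therefore simply to cite \cite{EN01}; for the reader's orientation we sketch the shape of the argument there.

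The proof in \cite{EN01} proceeds as follows. Since representation type is a block invariant, one first passes to the block decomposition of $S^\A_{\bar q}(n,r)$, which by Donkin's work is controlled by an explicit combinatorics ($\ell$-cores, $p$-adic expansions). Schur algebras are quasi-hereditary, so Ringel duality is available, and combined with Morita equivalences between blocks of $S^\A_{\bar q}(n,r)$ and blocks of $S^\A_{\bar q}(n',r')$ for smaller parameters it reduces the classification to finitely many ``small'' blocks. For each of these one determines the Gabriel quiver with relations of the basic algebra --- using the submodule structure of Weyl and tilting modules and the Cartan invariants --- and matches it against the classifications of representation-finite algebras and of tame algebras (special biserial, clannish, Brauer-graph); the remaining blocks are shown to be wild by producing a subquotient with a wild separated quiver.

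The genuinely delicate step, were one to redo this argument, is the exact identification of the finitely many tame blocks --- for instance the sporadic cases $n=3$, $\ell=3$, $r=7,8$ and the families (iv)--(v) --- and the certification that they are tame rather than wild, which requires more than dimension counts: one needs the full algebra structure of the basic algebra together with a covering or one-parameter degeneration argument. All of this is carried out in \cite{EN01}, and here we only invoke the result.
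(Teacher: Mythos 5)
Your proposal matches the paper exactly: this theorem is stated in Section~\ref{S:Type A-reptype} purely as a quotation of \cite[Theorem~1.3(A)-(C)]{EN01}, with no proof supplied, and is used only as input to the type~B classification via Theorem~\ref{thm:SBA}. Your added sketch of the internal argument of \cite{EN01} is accurate background but not something the paper attempts to reprove.
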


\subsection{} In this section we summarize some of the fundamental results that are used to 
classify the representation type of Schur algebras. The first proposition can be verified by using the existence of the determinant representation for 
$S^{\A}_{\bar{q}}(n,r_{1})$ (cf. \cite[Proposition 2.4B]{EN01}).

\begin{prop} If $S^{\A}_{\bar{q}}(n,r_{1})\otimes S^{\A}_{\bar{q}}(n,r_{2})$ has wild representation type then  
$S_{\bar{q}}^{\A}(n,r_{1}+n)\otimes S_{\bar{q}}^{\A}(n,r_{2})$ has wild representation type. 
\end{prop}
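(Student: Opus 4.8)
The statement asserts that wildness of $S^\A_{\bar q}(n,r_1)\otimes S^\A_{\bar q}(n,r_2)$ propagates to $S^\A_{\bar q}(n,r_1+n)\otimes S^\A_{\bar q}(n,r_2)$. The natural strategy is to exhibit the smaller tensor algebra as an idempotent truncation $eBe$ of the bigger one, where $B = S^\A_{\bar q}(n,r_1+n)\otimes S^\A_{\bar q}(n,r_2)$, and then invoke the standard fact that if $eBe$ is wild then so is $B$ (a truncation of a tame or finite-type algebra is again tame or finite type; contrapositively, wildness lifts). So the first step is to produce, inside $S^\A_{\bar q}(n,r_1+n)$, an idempotent $e_1$ with $e_1 S^\A_{\bar q}(n,r_1+n) e_1 \cong S^\A_{\bar q}(n,r_1)$; tensoring with the identity of $S^\A_{\bar q}(n,r_2)$ then gives the desired $e = e_1\otimes 1$ with $eBe \cong S^\A_{\bar q}(n,r_1)\otimes S^\A_{\bar q}(n,r_2)$.

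\textbf{Constructing the idempotent.} The existence of such an $e_1$ is exactly the type A analogue of Proposition~\ref{prop:idem} (case (a) with $n'=n$, $d'=r_1+n$ truncated down to weight $r_1$): concretely, in the realization $S^\A_{\bar q}(n,r_1+n)=\bigoplus_{\lambda,\mu}\Hom_{\cH}(x_\mu\cH, x_\lambda\cH)$ over compositions of $r_1+n$ into at most $n$ parts, one takes $e_1 = \sum_{\gamma}\phi^1_{\gamma\gamma}$ where $\gamma$ ranges over compositions of $r_1+n$ of the form (composition of $r_1$ into $\le n$ parts) $+\,(1,1,\dots,1)$, i.e.\ add one box to each of the $n$ parts. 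Lemma~\ref{lem:SBmult} (or its type A counterpart) shows $e_1 S^\A_{\bar q}(n,r_1+n) e_1$ has basis $\{\phi^g_{\lambda\mu}\}$ with $\lambda,\mu$ in this shifted set, and the bijection $\lambda\mapsto\lambda-(1^n)$ identifies this with $S^\A_{\bar q}(n,r_1)$ as algebras. This is precisely where the hypothesis $n'\equiv n$ and the shift by $n$ (rather than by $1$) enters: adding one box per row keeps everything a legitimate composition into $\le n$ parts. The determinant representation of $S^\A_{\bar q}(n,r_1)$ mentioned in the paragraph preceding the proposition is the representation-theoretic incarnation of this ``tensor with $\det$'' shift, and gives an alternative, cleaner route: the functor $M\mapsto M\otimes \det$ is an equivalence $\Mod(S^\A_{\bar q}(n,r_1))\to\Mod(S^\A_{\bar q}(n,r_1+n))$ onto a sum of blocks, realized by an idempotent.

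\textbf{Assembling the proof.} With $e=e_1\otimes 1_{S^\A_{\bar q}(n,r_2)}$ in hand, we have $eBe\cong S^\A_{\bar q}(n,r_1)\otimes S^\A_{\bar q}(n,r_2)$. By hypothesis this algebra is wild. Since for a finite-dimensional algebra $B$ and idempotent $e$, the functor $M\mapsto eM$ sends $B$-modules to $eBe$-modules and a representation embedding $k\langle x,y\rangle\text{-}\mathrm{mod}\hookrightarrow eBe\text{-}\mathrm{mod}$ can be composed with $M\mapsto Be\otimes_{eBe}M$ to land in $B\text{-}\mathrm{mod}$, wildness of $eBe$ forces wildness of $B$. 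Hence $S^\A_{\bar q}(n,r_1+n)\otimes S^\A_{\bar q}(n,r_2)$ is wild, as claimed.

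\textbf{Main obstacle.} The only genuinely substantive point is verifying that $e_1 S^\A_{\bar q}(n,r_1+n) e_1 \cong S^\A_{\bar q}(n,r_1)$ as \emph{algebras}, not merely that the relevant blocks match up dimensionally; this requires checking that the multiplication of the $\phi^g_{\lambda\mu}$'s (via the double-coset combinatorics) is unaffected by the uniform shift $\lambda\leftrightarrow\lambda+(1^n)$ of the weights. This is routine given the machinery already set up for Proposition~\ref{prop:idem}, and indeed is essentially a special case of it transported to type A, so I would cite that proposition's proof technique rather than redo the coset bookkeeping. Everything else — the reduction via idempotent truncation and the fact that wildness lifts along $eBe\subseteq B$ — is standard and can be stated in one or two lines with a reference to \cite{EN01}.
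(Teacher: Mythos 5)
Your main construction does not work: the idempotent you propose does not truncate $S^\A_{\bar q}(n,r_1+n)$ down to $S^\A_{\bar q}(n,r_1)$. With $e_1=\sum_\gamma\phi^1_{\gamma\gamma}$ over the shifted weights $\gamma=\gamma'+(1^n)$, the algebra $e_1 S^\A_{\bar q}(n,r_1+n)e_1$ is $\bigoplus_{\lambda,\mu}\Hom_{\cH_{\bar q}(\Sigma_{r_1+n})}(x_{\mu}\cH,x_{\lambda}\cH)$ with $\lambda,\mu$ running over the shifted set, and the $(\lambda,\mu)$-block has dimension equal to the number of $\NN$-matrices with row sums $\lambda$ and column sums $\mu$ --- which is strictly larger than the corresponding count for $\lambda-(1^n)$ and $\mu-(1^n)$ (already $\lambda=\mu=(1^n)$ gives $n!$ versus $1$). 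Concretely, for $n=2$, $r_1=0$ your $e_1=1_{(1,1)}$ yields $e_1 S^\A_{\bar q}(2,2)e_1\cong\End_{\cH_{\bar q}(\Sigma_2)}(\cH_{\bar q}(\Sigma_2))\cong\cH_{\bar q}(\Sigma_2)$, which is $2$-dimensional, whereas $S^\A_{\bar q}(2,0)=K$. So the verification you defer in your last paragraph as ``routine'' is in fact false: the uniform weight shift $\lambda\mapsto\lambda+(1^n)$ is not implemented by an idempotent truncation, and this situation is genuinely different from Proposition~\ref{prop:idem}, where $d$ is held fixed and only $n$ changes.

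The correct mechanism --- and the one the paper invokes by citing \cite[Proposition 2.4B]{EN01} --- is the determinant representation: $M\mapsto M\otimes\det$ is a fully faithful exact functor $\Mod(S^\A_{\bar q}(n,r_1))\to\Mod(S^\A_{\bar q}(n,r_1+n))$ whose essential image is the Serre subcategory of modules all of whose weights have every part at least $1$, i.e.\ the modules annihilated by $1-e_1$. This identifies $S^\A_{\bar q}(n,r_1)$ with the \emph{quotient} algebra $S^\A_{\bar q}(n,r_1+n)/\langle 1-e_1\rangle$, not with $e_1 S^\A_{\bar q}(n,r_1+n)e_1$; in particular the image is in general not a union of blocks, contrary to your parenthetical description of the determinant route. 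Once this is set right, your reduction goes through: tensor the embedding with the identity on $\Mod(S^\A_{\bar q}(n,r_2))$, and use that wildness passes from a quotient $A/I$ to $A$ (equivalently, along any full exact embedding of module categories closed under summands). So the conclusion is salvageable, but only after replacing your idempotent truncation by the determinant-twist quotient.
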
 

Next we can present a sufficient criteria to show that the tensor product of type A Schur algebras has wild representation type. 

\begin{prop} \label{P:prodwildreptype} Suppose that the Schur algebras $S^{\A}_{\bar{q}}(n,r_{1})$ and $S^{\A}_{\bar{q}}(n,r_{2})$ are non-semisimple algebras. 
Then $S^{\A}_{\bar{q}}(n,r_{1})\otimes S^{\A}_{\bar{q}}(n,r_{2})$ has wild representation type. 
\end{prop}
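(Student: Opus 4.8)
The statement to prove is: if $S^\A_{\bar q}(n,r_1)$ and $S^\A_{\bar q}(n,r_2)$ are both non-semisimple, then $S^\A_{\bar q}(n,r_1)\otimes S^\A_{\bar q}(n,r_2)$ has wild representation type. The strategy is to reduce to the smallest non-semisimple quotients of each tensor factor and then exhibit an explicit wild subquotient. First I would recall that $S^\A_{\bar q}(n,r_i)$ non-semisimple forces $\bar q$ to be a primitive $l$th root of unity with $l\le r_i$ (Theorem~\ref{T:TypeAss}), so we are genuinely in the root-of-unity regime. Then, using the idempotent truncation available for type A Schur algebras (the determinant representation lets us embed $S^\A_{\bar q}(m,s)$ as $e S^\A_{\bar q}(m,s+km)e$, the idea behind the Proposition on wildness-under-adding-$n$), it suffices to produce, inside each factor, a quasi-hereditary quotient or idempotent-truncation algebra that is already not semisimple and is as small as possible — for instance a Brauer-tree-like or local algebra coming from a single non-semisimple block. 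Since $eAe$ being wild implies $A$ is wild, and a tensor product of algebras is wild as soon as a tensor product of suitable subquotients is wild, the problem reduces to a finite check.

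**Key steps in order.** (1) Invoke Theorem~\ref{T:TypeAss} to pin down that $\bar q$ is a primitive $l$th root of unity with $l\le r_1,r_2$. (2) For each factor, pass to a non-semisimple block; by the known block theory of $q$-Schur algebras the smallest non-semisimple blocks have a controlled Ext-quiver — in the worst case a quiver with one loop or a two-vertex quiver with arrows both ways subject to a zero/commutativity relation. (3) Tensor two such local-ish algebras: the Ext-quiver of $B_1\otimes B_2$ contains (as a subquiver of an idempotent truncation at a sum of two primitive idempotents) the separated quiver / a quiver with at least two loops at a vertex, or equivalently a subquiver known to be wild by the classical list of tame/wild quivers and their deformations (Ringel, Erdmann). (4) Use that idempotent truncation and taking quotients preserve or detect wildness "upward": if $f(B_1\otimes B_2)f$ is wild then $B_1\otimes B_2$ is wild, hence the block of $S^\A\otimes S^\A$ is wild, hence $S^\A_{\bar q}(n,r_1)\otimes S^\A_{\bar q}(n,r_2)$ is wild. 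This mirrors exactly the EN01 argument for a single $S^\A$ but now at the level of the tensor product, where the doubling of loops/arrows makes wildness essentially automatic once each factor contributes a nontrivial self-extension or a nontrivial $2$-step uniserial structure.

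**The main obstacle.** The delicate point is step (2)–(3): controlling the Ext-quiver of a non-semisimple block of $S^\A_{\bar q}(n,r_i)$ precisely enough to guarantee that the tensor product's local algebra is wild. A non-semisimple algebra need only have \emph{one} pair of simples with $\operatorname{Ext}^1\ne 0$, i.e. a single arrow, which alone is not wild; the subtlety is that for $q$-Schur algebras a non-semisimple block always carries a \emph{symmetric} piece of structure ($\operatorname{Ext}^1(L,M)\ne0 \Rightarrow \operatorname{Ext}^1(M,L)\ne 0$, by the contravariant duality on $S^\A$), so the relevant local algebra has a $2$-cycle $x\colon 1\to 2$, $y\colon 2\to 1$, and after tensoring one gets a quiver on $\{(1,1),(1,2),(2,1),(2,2)\}$ with enough arrows (four length-one paths through each "corner") to contain a wild subquiver — but one must check the relations inherited from the two factors do not collapse this. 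I would handle this by citing the explicit description of minimal non-semisimple blocks of $q$-Schur algebras (the $l\le r<2l$, $n\ge3$ case is a Brauer line, the $n=2$ cases are Nakayama), computing the tensor product of two Nakayama or Brauer-line algebras directly, and matching the result against Erdmann's classification of wild blocks of group-algebra type; the forced two-sided arrows make each such tensor product fall outside the (very short) tame list.

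**Remark on scope.** Strictly, one reduces to the \emph{smallest} $r_i$ with $l\le r_i$ by the adding-$n$ proposition run in reverse is not valid, so instead one reduces to a fixed non-semisimple block and uses that wildness of a block implies wildness of the algebra; the point is only that \emph{some} block of each factor is non-semisimple, which is exactly the hypothesis. Thus the whole argument is: non-semisimple $\Rightarrow$ a block with a $2$-cycle in its quiver $\Rightarrow$ tensor of two such is wild $\Rightarrow$ done.
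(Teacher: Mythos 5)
Your overall strategy coincides with the paper's: pass to a non-semisimple block of each tensor factor, use the contravariant duality on $S^{\A}_{\bar q}(n,r)$ to upgrade every arrow of the $\operatorname{Ext}^1$-quiver to a two-sided arrow, and then argue that the tensor product of two such blocks is wild, wildness of a block or idempotent truncation propagating upward. One small correction at the start: the ``quiver with one loop'' alternative you allow for cannot occur, since $S^{\A}_{\bar q}(n,r)$ is quasi-hereditary and therefore has no self-extensions of simple modules; this is precisely how the paper guarantees that a non-semisimple block has at least two simples.

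The genuine gap is in the case you yourself flag as delicate, namely when every non-semisimple block of each factor has exactly two simple modules. There your argument rests on the claim that the resulting quiver on $\{(1,1),(1,2),(2,1),(2,2)\}$ ``contains a wild subquiver,'' and that is false as a statement about quivers: the $\operatorname{Ext}^1$-quiver of $\mathcal{B}_1\otimes\mathcal{B}_2$ is a $4$-cycle with each edge doubled into a two-sided arrow, and its separated quiver is a disjoint union of two cycles of type $\widetilde{A}_3$. Consequently the radical-square-zero quotient is \emph{tame}, and no purely quiver-theoretic criterion (separated quiver, Dynkin/extended Dynkin lists) can yield wildness here; the relations are essential. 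The paper closes this case not by quiver combinatorics but by identifying the projective modules of $\mathcal{B}_1\otimes\mathcal{B}_2$ with those of a regular block of category $\mathcal{O}$ for a Lie algebra of type $\A_1\times\A_1$ and invoking the wildness argument of \cite[Lemma 4.2]{FNP01}. (In the complementary case, where one factor has a block containing simples $S_1,S_2,S_3$ with $\operatorname{Ext}^1(S_1,S_2)\neq0\neq\operatorname{Ext}^1(S_2,S_3)$, the six-vertex quiver of Figure~\ref{A4} does have a separated quiver that is not a union of Dynkin or extended Dynkin diagrams, so there the coarse argument you describe is sufficient.) To repair your proof you would need to actually carry out the ``direct computation'' you defer to: either reproduce the analysis of the projectives of the tensor block as in \cite{FNP01}, or exhibit explicit two-parameter families of pairwise non-isomorphic indecomposables; citing the classification of wild quivers will not close this case.
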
 

\begin{proof} First note that $S^{\A}_{\bar{q}}(n,r)$ is a quasi hereditary algebra and if $S^{\A}_{\bar{q}}(n,r)$ is not semisimple then it must have 
a block with at least two simple modules. 

Suppose that $S_{1}, S_{2}, S_{3}$ are three simple modules in $S^{\A}_{\bar{q}}(n,r_{1})$ with 
$\text{Ext}^{1}_{S^{\A}_{\bar{q}}(n,r_{1})}(S_{1},S_{2})\neq 0$ and $\text{Ext}^{1}_{S^{\A}_{\bar{q}}(n,r_{1})}(S_{2},S_{3})\neq 0$. 
Note that via the existence of the transposed duality, 
$$\text{Ext}^{1}_{S^{\A}_{\bar{q}}(n,r_{1})}(S_{i},S_{j})\cong \text{Ext}^{1}_{S^{\A}_{\bar{q}}(n,r_{1})}(S_{j},S_{i})$$ 
for $i,j=1,2,3$. Similarly, let $T_{1},T_{2}$ be two simple modules for $S^{\A}_{\bar{q}}(n,r_{2})$ with $\text{Ext}^{1}_{S^{\A}_{\bar{q}}(n,r_{2})}(T_{1},T_{2})\neq 0$. 
Then the $\text{Ext}^{1}$-quiver for $S^{\A}_{\bar{q}}(n,r_{1})\otimes S^{\A}_{\bar{q}}(n,r_{2})$ will have a subquiver of the 
form as in Figure~\ref{A4} below.
This quiver cannot be separated into a union of Dynkin diagrams or extended Dynkin diagrams. Consequently, 
$S^{\A}_{\bar{q}}(n,r_{1})\otimes S^{\A}_{\bar{q}}(n,r_{2})$ must has wild representation type.  

\begin{figure}[ht]
\begin{center}
\setlength{\unitlength}{.5cm}
\begin{picture}(12,4)
\put(2.0,0.0){$\bullet$}
\put(6.0,0.0){$\bullet$}
\put(10.0,0.0){$\bullet$}
\put(2.0,4.0){$\bullet$}
\put(6.0,4.0){$\bullet$}
\put(10.0,4.0){$\bullet$}

\put(2.8,0.2){\vector(1,0){2.5}}
\put(5.2,-0.2){\vector(-1,0){2.5}}

\put(6.8,0.2){\vector(1,0){2.5}}
\put(9.2,-0.2){\vector(-1,0){2.5}}

\put(2.8,4.2){\vector(1,0){2.5}}
\put(5.2,3.8){\vector(-1,0){2.5}}

\put(6.8,4.2){\vector(1,0){2.5}}
\put(9.2,3.8){\vector(-1,0){2.5}}

\put(1.8,0.8){\vector(0,1){2.5}}
\put(2.2,3.2){\vector(0,-1){2.5}}

\put(5.8,0.8){\vector(0,1){2.5}}
\put(6.2,3.2){\vector(0,-1){2.5}}

\put(9.8,0.8){\vector(0,1){2.5}}
\put(10.2,3.2){\vector(0,-1){2.5}}
\end{picture}
\caption{}\label{A4}
\end{center}
\end{figure}

The other case to consider is when the blocks of $S^{\A}_{\bar{q}}(n,r_{1})$ and $S^{\A}_{\bar{q}}(n,r_{2})$ have at most two 
simple modules. Let ${\mathcal B}_{j}$ be a block of $S^{\A}_{\bar{q}}(n,r_{j})$ for $j=1,2$ with two simple modules. 
There are four simple modules in ${\mathcal B}_{1}\otimes {\mathcal B}_{2}$ and the structure of the projective 
modules are the same as regular block for category ${\mathcal O}$ for the Lie algebra of type $\A_{1}\times \A_{1}$ 
(cf. \cite[4.2]{FNP01}). The argument in \cite[Lemma 4.2]{FNP01} can be use to show that ${\mathcal B}_{1}\otimes {\mathcal B}_{2}$ 
has wild representation type. 
\end{proof} 

\subsection{} The results in \cite[Theorem 1.3(A)-(C)]{EN01} entail using a different parameter $\bar{q}$ than the parameter 
$q$ in our paper. The relationship is given by $\bar{q}=q^{-2}$ or equivalently $q^{2}=(\bar{q})^{-1}$ with $S_{q}^{\A}(n,d)\cong 
S_{\bar{q}}^{\A}(n,d)$. This means that 
\begin{itemize} 
\item $q$ is generic if and only if $\bar{q}$ is generic, 
\item $q^{2}$ is a primitive $l$th root of unity if and only if $\bar{q}$ is a primitive $l$th root of unity;  
\item if $q$ is a primitive $2s$-th root of unity if and only if  $\bar{q}$ is a primitive $s$-th root of unity;
\item if $q$ is a primitive $(2s+1)$-th root of unity if and only if $\bar{q}$ is a primitive $(2s+1)$-th root of unity. 
\end{itemize} 

Now let $n^{\prime}\geq n$. By Proposition~\ref{prop:idem}, under suitable conditions on $n^{\prime}$ and $n$, there exists an idempotent $e\in S_{Q,q}^{\B}(n^{\prime},d)$ such that 
$S_{Q,q}^{\B}(n,d)\cong eS_{Q,q}^{\B}(n^{\prime},d)e$. By using the proof in \cite[Proposition 2.4B]{EN01}, one has the 
following result. 

\begin{prop} \label{P:increaserank}  Let $n^{\prime}\geq n$ with  $n' \geq n$ and $n' \equiv n \mod 2$.
\begin{itemize} 
\item[(a)] If $S^{\B}_{Q,q}(n,d)$ is not semisimple then $S^{\B}_{Q,q}(n^{\prime},d)$ is not semisimple. 
\item[(b)] If $S^{\B}_{Q,q}(n,d)$ has wild representation type then $S^{\B}_{Q,q}(n^{\prime},d)$ has wild representation type. 
\end{itemize}
\end{prop}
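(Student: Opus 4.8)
The plan is to reduce Proposition~\ref{P:increaserank} to the classical rank-shifting argument of Erdmann--Nakano, using the idempotent supplied by Proposition~\ref{prop:idem}. Since $n' \geq n$ and $n' \equiv n \bmod 2$, part (a) of Proposition~\ref{prop:idem} gives an idempotent $e \in S^\B_{Q,q}(n',d)$ with $e S^\B_{Q,q}(n',d) e \simeq S^\B_{Q,q}(n,d)$. This places us in the abstract situation ``$B = eAe$ for an idempotent $e \in A$'', and the whole point is that both semisimplicity-failure and wildness are inherited upward along such a corner inclusion.

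First I would record the general principle. If $B = eAe$, then the Schur functor $M \mapsto eM$ is exact and sends $A$-modules to $B$-modules, while its right adjoint $\mathcal{G} = \Hom_A(Ae, -)$ is fully faithful and identifies $\mrm{mod}(B)$ with a full subcategory of $\mrm{mod}(A)$ closed under the relevant constructions. Concretely: (a) if $B$ is not semisimple, pick a non-split extension $0 \to S' \to E \to S \to 0$ of $B$-modules; applying $\mathcal{G}$ (or just noting that $\Ext^1_B(S,S')$ embeds into $\Ext^1_A(\mathcal{G}S, \mathcal{G}S')$ via the adjunction spectral sequence already invoked in Section~\ref{sec:SF}) produces a nonzero self-extension class for $A$, so $A$ is not semisimple. (b) For wildness, the standard fact (this is exactly \cite[Proposition 2.4B]{EN01}) is that $\mathcal{G}$ embeds $\mrm{mod}(B)$ as a full exact subcategory of $\mrm{mod}(A)$, so a controlled representation embedding $k\langle x,y\rangle \to \mrm{mod}(B)$ composes with $\mathcal{G}$ to give one into $\mrm{mod}(A)$; hence $B$ wild forces $A$ wild.

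The second step is simply to quote these with $A = S^\B_{Q,q}(n',d)$ and $B = S^\B_{Q,q}(n,d)$. For (a): if $S^\B_{Q,q}(n,d)$ is not semisimple, the argument above shows $S^\B_{Q,q}(n',d)$ is not semisimple. For (b): if $S^\B_{Q,q}(n,d)$ has wild representation type, the full-subcategory embedding forces $S^\B_{Q,q}(n',d)$ to be wild as well. Both are verbatim applications of the reasoning in \cite[Proposition 2.4B]{EN01}, which is why the proposition is stated as a direct corollary of Proposition~\ref{prop:idem}.

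I do not anticipate a genuine obstacle here; the only point requiring a little care is that $\mathcal{G}$ really is fully faithful and exact enough for the embedding of module categories to preserve indecomposability and the geometry of one-parameter families. This is handled once and for all by the cited Erdmann--Nakano proposition (and is the module-category shadow of the spectral sequence of Doty--Erdmann--Nakano already used in Section~\ref{sec:SF}), so in the write-up I would simply invoke it rather than reprove it. The mild subtlety worth a sentence is ensuring the hypotheses of Proposition~\ref{prop:idem}(a) are exactly ``$n' \geq n$ and $n' \equiv n \bmod 2$,'' which match the hypotheses stated here, so the idempotent $e$ is available and the corner-algebra identification is the literal one we need.
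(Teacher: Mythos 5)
Your proposal is correct and follows essentially the same route as the paper, which likewise obtains the idempotent $e$ with $eS^{\B}_{Q,q}(n',d)e \cong S^{\B}_{Q,q}(n,d)$ from Proposition~\ref{prop:idem} and then invokes the argument of \cite[Proposition 2.4B]{EN01} without further elaboration. Your additional remarks on why the corner-algebra embedding transports non-semisimplicity and wildness upward are accurate and only make explicit what the paper leaves to the citation.
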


\subsection{Type B Results} Throughout this section, let $S^{\B}_{Q,q}(n,d)$ be the $q$-Schur algebra of Type B 
under the condition that the polynomial $f_{d}^{\B}(Q,q)\neq 0$. Moreover, assume that $q^{2}\neq 1$ (i.e., $q\neq 1$ or a primitive $2$nd root of unity). 
One can apply the isomorphism in Theorem~\ref{thm:SBA} to determine the representation type for $S^{\B}_{Q,q}(n,d)$ from the Type A results stated in Section~\ref{S:Type A-reptype}. 

\begin{thm} The algebra $S^{\B}_{Q,q}(n,d)$ is semisimple if and only if one of the following holds:
\begin{itemize}
\item[(i)] $n=1$; 
\item[(ii)]  $q$ is not a root of unity;
\item[(iii)]  $q^{2}$ is a primitive $l$th root of unity and $d<l$;
\item[(iv)] $n=2$ and $d$ arbitrary;
\end{itemize}
\end{thm}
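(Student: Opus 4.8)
The plan is to reduce the classification of semisimplicity for $S^{\B}_{Q,q}(n,d)$ to the type A classification (Theorem~\ref{T:TypeAss}) via the isomorphism theorem. Under the standing hypothesis $f^\B_d(Q,q)\neq 0$, Theorem~\ref{thm:SBA} gives
\[
S^{\B}_{Q,q}(n,d) \cong \bigoplus_{i=0}^d S_q^\A(\nhc, i) \otimes S_q^\A(\nhf, d-i),
\]
so $S^{\B}_{Q,q}(n,d)$ is semisimple if and only if each summand $S_q^\A(\nhc, i) \otimes S_q^\A(\nhf, d-i)$ is semisimple. Since a tensor product of two finite-dimensional algebras over a field is semisimple if and only if both factors are semisimple (the factors embed as subalgebras, and for separable/split algebras the tensor product of semisimple algebras is semisimple; here all the relevant type A Schur algebras are split over $K$), this reduces to asking when $S_q^\A(\nhc, i)$ and $S_q^\A(\nhf, d-i)$ are simultaneously semisimple for every $0\le i\le d$.

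The key step is then a careful case analysis using Theorem~\ref{T:TypeAss}, rephrased in terms of $q$ rather than $\bar q = q^{-2}$ via the dictionary in the paragraph preceding the statement (so ``$q^2$ is a primitive $\ell$th root of unity'' replaces ``$\bar q$ is a primitive $\ell$th root of unity''). First I would dispose of the trivial ranks: if $n=1$ then $\nhc = 1$ and $\nhf = 0$, and $S_q^\A(1,i)$ is always semisimple (it is $K$ when $i\geq 0$, and $S_q^\A(m,0)\cong K$ in general), giving case (i). If $q^2$ is not a root of unity, every type A Schur algebra is semisimple, giving case (ii). If $q^2$ is a primitive $\ell$th root of unity with $d<\ell$, then for all $i\le d$ we have $i<\ell$ and $d-i<\ell$, so each $S_q^\A(m,i)$ with $m$ arbitrary is semisimple by Theorem~\ref{T:TypeAss}(ii), giving case (iii).

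The more delicate case is $n=2$, so that $\nhc = \nhf = 1$: then every summand is $S_q^\A(1,i)\otimes S_q^\A(1,d-i) \cong K\otimes K = K$, hence $S^{\B}_{Q,q}(2,d)$ is semisimple (indeed isomorphic to $K^{d+1}$) for all $d$, giving case (iv). The main obstacle is the converse direction: I must show that outside cases (i)--(iv) the algebra fails to be semisimple. So suppose $n\geq 3$ (hence $\nhc\geq 2$), $q^2$ is a primitive $\ell$th root of unity, and $d\geq\ell$; I would exhibit a non-semisimple summand. The natural choice is $i=0$, giving the factor $S_q^\A(\nhf, d)$ with $\nhf\ge 1$: if $\nhf\ge 2$ this is non-semisimple by Theorem~\ref{T:TypeAss} since $d\ge\ell$ (one must check the exceptional $n=2$, $\ell=2$, $r$ odd sub-case of that theorem does not rescue semisimplicity for all the relevant $i$ — but here one can instead pick $i$ with both $i$ and $d-i$ even and $\geq \ell$, or invoke that $S_q^\A(2,d)$ is non-semisimple for $d$ even $\geq 2$ when $\ell=2$), and if $\nhf=1$ (i.e.\ $n=3$) then $\nhc=2$ and I use the summand $i=d$, namely $S_q^\A(2,d)\otimes S_q^\A(1,0)\cong S_q^\A(2,d)$, which is non-semisimple for suitable $d\ge\ell$ by Theorem~\ref{T:TypeAss}; the only loophole is $\ell=2$ with all such $d$ odd, which cannot happen once $d\geq 2$. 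Assembling these sub-cases — and being careful that the $q^2\ne 1$ standing assumption is exactly what rules out the degenerate $\ell=1$ situation — completes the equivalence. I expect the bookkeeping around the $n=2$, $\ell=2$ exceptional lines of Theorem~\ref{T:TypeAss} to be the fiddly part, resolved by choosing the index $i$ in the direct sum judiciously rather than always taking $i=0$.
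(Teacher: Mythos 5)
Your proposal is correct and follows the same overall strategy as the paper: apply Theorem~\ref{thm:SBA} to reduce everything to the type A classification in Theorem~\ref{T:TypeAss}. The forward direction (cases (i)--(iv) imply semisimplicity) is essentially identical, though you are more careful than the paper on one point it silently assumes, namely that a tensor product of (split) semisimple summands is again semisimple. Where you genuinely diverge is in the converse. The paper treats only $n=3$ and $n=4$ directly, exhibiting the non-semisimple summand $S_q^{\A}(2,l)$ at $i=d-l$, and then propagates non-semisimplicity to all $n\geq 3$ via the idempotent truncation $S^{\B}_{Q,q}(n,d)\cong eS^{\B}_{Q,q}(n',d)e$ of Proposition~\ref{P:increaserank}(a). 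You instead work with general $n\geq 3$ at once and locate a non-semisimple tensor factor by choosing the index $i$ in the direct sum judiciously. Both routes work; the paper's buys a shorter case analysis at the cost of invoking the idempotent machinery, while yours is self-contained within the decomposition and, notably, actually covers the $\ell=2$ situation that the paper's written argument excludes (it assumes $l\geq 3$ in the converse without comment). Your observation that $S_q^{\A}(2,2)$ is non-semisimple when $\ell=2$, so that the even value $i=2\leq d$ always yields a bad summand, is exactly what closes that gap.

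One clause should be repaired: for $d$ odd you cannot ``pick $i$ with both $i$ and $d-i$ even,'' since $i$ and $d-i$ then have opposite parities. This is harmless because only one tensor factor needs to be non-semisimple (if $J(A)\neq 0$ then $J(A)\otimes B$ is a nonzero nilpotent ideal of $A\otimes B$), so taking $i$ even with $\ell\leq i\leq d$ already suffices; your parenthetical alternative via $S_q^{\A}(2,i)$ for even $i\geq 2$ is the correct fix. With that wording tightened, the argument is complete.
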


\begin{proof} The semisimplicity of (i)-(iii) follow by using Theorem~\ref{thm:SBA} with Theorem~\ref{T:TypeAss}. 
The semisimplicity of (iv) follows by Theorem~\ref{thm:SBA} and the fact that $S_{q}^{\A}(1,d)$ is always semisimple. 

Now assume that $q^{2}$ is a primitive $l$th root of unity, $d\geq l$, $n\geq 3$ and $l\geq 3$. Consider the case 
when $n=3$. From Theorem~\ref{thm:SBA}, 
\begin{equation} \label{eq:n=3iso}
S_{Q,q}^{\B}(3,d)\cong \bigoplus_{i=0}^{d} S_{q}^{\A}(2,i)\otimes S_{q}^{\A}(1,d-i).
\end{equation} 
If $d\geq l$ then $S_{q}^\A(2,l)$ appears as a summand of $S_{Q,q}^{\B}(3,d)$ (when $i=d-l$). For $l\geq 3$,  
$S^\A_{q}(2,l) \simeq S^{A}_{\bar{q}}(2,l)$ is not semisimple. It follows that $S_{Q,q}^{\B}(3,d)$ is not semisimple for $d\geq l$. 
One can repeat the same argument for $n=4$ to show that $S_{Q,q}^{\B}(4,d)$ is not semisimple for $d\geq l$. 
Now apply Proposition~\ref{P:increaserank}(a) to deduce that $S_{Q,q}^{\B}(n,d)$ is not semisimple 
for $n\geq 3$ and $d\geq l$. 
\end{proof} 

\begin{thm} The algebra $S^{\B}_{Q,q}(n,d)$  has finite representation type but
is not semisimple if and only if $q^{2}$ is a primitive $l$th root of unity
with $l \leq  d$,
and one of the following holds:
\begin{itemize}
\item[(i)] $n\geq 5$, $l\leq d < 2l$; 
\item[(ii)]  $n=3$, $p=0$ and $l \leq d$; 
\item[(iii)] $n=3$, $p \geq 2$ and $l \leq d < l p$; 
\item[(iv)] $n=4$, $p=0$, $l=2$ and $d\geq 4$ with $d$ odd.  
\item[(v)] $n=4$, $p\geq 3$, $l=2$ and $4< d\leq 2p-1$ with $d$ odd.  
\end{itemize}
The algebra $S^{\B}_{Q,q}(n,d)$  has tame representation type if and only if 
\begin{itemize} 
\item[(vi)] $n=3$, $l=2$, $p=3$ and $d=6$; 
\item[(vii)] $n=3$, $l\geq 3$, $p=2$ or $3$ and $l p \leq d < l(p+1)$; 
\item[(viii)] $n=4$, $l=2$, $p=3$ and $d=7$. 
\end{itemize} 
\end{thm}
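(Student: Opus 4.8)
The plan is to derive the classification entirely from the isomorphism theorem (Theorem~\ref{thm:SBA}): under $f^\B_d(Q,q)\ne 0$ it gives $S^\B_{Q,q}(n,d)\cong\bigoplus_{i=0}^d S^\A_q(\nhc,i)\otimes S^\A_q(\nhf,d-i)$, so that the representation type of $S^\B_{Q,q}(n,d)$ is the worst among those of the summands (the algebra is semisimple, resp.\ representation-finite, exactly when every summand is; it is wild as soon as one summand is wild; otherwise it is infinite-tame precisely when some summand is). Thus it suffices to determine the type of a single tensor product $S^\A_q(a,i)\otimes S^\A_q(b,j)$ with $i+j=d$, and here I would use three facts: a tensor product of two semisimple algebras is semisimple; if exactly one factor is non-semisimple, the other is a product of matrix algebras and tensoring with it is a Morita equivalence, so the product inherits the type of the non-semisimple factor; and if both factors are non-semisimple the product is wild by Proposition~\ref{P:prodwildreptype} (whose argument applies verbatim to two non-semisimple type A Schur algebras of possibly different ranks, which is what occurs when $n$ is odd). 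The type A inputs are Theorems~\ref{T:TypeAss} and \ref{T:finitetypeA} and the tame classification \cite[Theorem~1.3(C)]{EN01}, fed in through the substitution $\bar q=q^{-2}$ (so that $q^2$ has multiplicative order $l$ iff $\bar q$ does), together with the remark that $p\nmid l$, which eliminates impossible pairs such as $(l,p)=(2,2)$.

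The combinatorial observation that organises the casework is that $S^\A_q(m,k)$ is non-semisimple essentially exactly when $m\ge 2$ and $k\ge l$ (the only exceptions being the type A anomalies with $m=2$, $l=2$, $k$ odd and $k$ below a bound), while in the $i$th summand the two ranks $i$ and $d-i$ sum to $d$. Consequently both tensor factors of a summand can be non-semisimple only if $d\ge 2l$: for $l\le d<2l$ no summand is wild and at most one of the two factors of each summand is non-semisimple, whereas for $d\ge 2l$ a wild summand always exists. This accounts for the window $l\le d<2l$ appearing in part (i).

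I would then run the case analysis organised by the pair $(\nhc,\nhf)$. For $n\le 2$ both factors are $K$ and the algebra is semisimple, matching the absence of these $n$ from the lists. For $n=3$ the summands collapse to $S^\A_q(2,i)$, so the type of $S^\B_{Q,q}(3,d)$ is the worst of $S^\A_q(2,0),\dots,S^\A_q(2,d)$; inserting the $n=2$ rows of the type A theorems produces the representation-finite windows of rows (ii),(iii) and the tame windows of rows (vi),(vii). For $n=4$ the summands are $S^\A_q(2,i)\otimes S^\A_q(2,d-i)$, and Proposition~\ref{P:prodwildreptype} forces wildness once two of these rank-two factors are simultaneously non-semisimple; the $l=2$, odd-$k$ semisimplicity of type A confines the surviving non-semisimple, non-wild regime to $d$ odd, giving rows (iv),(v),(viii). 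For $n=5$ the summands are $S^\A_q(3,i)\otimes S^\A_q(2,d-i)$: when $l\le d<2l$ the unique possibly-non-semisimple factor of each summand is either $S^\A_q(3,i)$ with $l\le i<2l$, which is representation-finite by Theorem~\ref{T:finitetypeA}(i), or $S^\A_q(2,d-i)$ with $l\le d-i<2l$, which is representation-finite by a direct check against Theorem~\ref{T:finitetypeA} (using $d-i<2l\le lp$ when $p\ne 0$); hence $S^\B_{Q,q}(5,d)$ is representation-finite and not semisimple, which is row (i), while for $d\ge 2l$ one exhibits a summand with both factors non-semisimple, hence wild. For $n\ge 6$ one uses Proposition~\ref{prop:idem} and Proposition~\ref{P:increaserank} to transfer non-semisimplicity and wildness up from $n=5$ (odd) or $n=6$ (even); and for $l\le d<2l$ the argument of the $n=5$ case applies again — now all ranks are $\ge 3$, at most one factor of each summand is non-semisimple, and Theorem~\ref{T:finitetypeA}(i) gives representation-finiteness — completing row (i).

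The hard part is the bookkeeping for small $n$. The rank-two algebras $S^\A_q(2,k)$ have the most erratic representation type, depending intricately on $\textup{char}(K)$, on $l$, and on the parity and size of $k$, and since the type of $S^\B_{Q,q}(n,d)$ is the \emph{maximum} over all $0\le i\le d$ of the types of the two factors of the $i$th summand, the real effort is to verify that this maximum lands exactly on the asserted boundaries $d=l,\,2l,\,lp,\,l(p+1),\,2p-1$ and the sporadic small cases, and that the resulting families are mutually exclusive and exhaust the region $f^\B_d(Q,q)\ne 0$, $q^2\ne 1$.
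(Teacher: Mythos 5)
Your proposal is correct and follows essentially the same route as the paper: reduce via the isomorphism of Theorem~\ref{thm:SBA} to the direct sum of tensor products $S^\A_q(\nhc,i)\otimes S^\A_q(\nhf,d-i)$, invoke the Erdmann--Nakano type A classification (Theorems~\ref{T:TypeAss} and \ref{T:finitetypeA}) with the substitution $\bar q=q^{-2}$, use Proposition~\ref{P:prodwildreptype} to force wildness whenever two factors are simultaneously non-semisimple (which pins down the window $l\le d<2l$ for $n\ge 5$), and then do the delicate $\nhc=\nhf=2$ bookkeeping for $n=3,4$. The only cosmetic differences are that you make explicit the Morita-equivalence step when one tensor factor is semisimple and optionally route $n\ge 6$ through Propositions~\ref{prop:idem} and \ref{P:increaserank}, both of which the paper leaves implicit or handles directly.
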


\begin{proof} We first reduce our analysis to the situation where $n=3$ and $4$. 
Assume that $n \geq 5$ so $\nhc \geq 3$ and $\nhf \geq 2$.
By Theorem~\ref{T:TypeAss}, the algebras $S^{\A}_{q}(2,l)$ and $S^{\A}_{q}(i,l+j)$ are not semisimple for $i \geq 3, j \geq 0$, and hence neither are $S^{\A}_{q}(\nhc,l+j)$ and $S^\A_q(\nhf, l)$ for $n \geq 5, j \geq 0$.
Therefore,$S^{\A}_{q}(\nhc,l+j) \otimes S^\A_q(\nhf, l)$ has wild representation type by Proposition~\ref{P:prodwildreptype}. 
It follows that $S^{\B}_{Q,q}(n, d)$ has wild representation type for $d\geq 2l, n \geq 5$. 
When $l\leq d < 2l$ and $n\geq 5$, one can use Theorem~\ref{thm:SBA} in conjunction with Theorem~\ref{T:finitetypeA} to prove that 
$S^{\B}_{Q,q}(n,d)$ has finite representation type. 

Now consider the case when $n=3$. The isomorphism (\ref{eq:n=3iso}) indicates that we can reduce our analysis to considering 
$S^{\A}_{q}(2,r)$. From this isomorphism and Theorem~\ref{T:finitetypeA}, one can verify that (i) when $\text{char } K=0$ then 
$S^{\B}_{Q,q}(3,d)$ has finite representation type (but is not semisimple) for $l\leq d$, (ii) when $\text{char } K=p>0$ then 
$S^{\B}_{Q,q}(3,d)$ has  finite representation type (but is not semisimple) for $l\leq d <l p$, and (iii) when $\text{char } K=p>0$, 
$S^{\B}_{Q,q}(3,d)$ has  infinite representation type for $d\geq l p$. 

For $n=3$, one can also see that under conditions (vi) and (vii), $S^{\B}_{Q,q}(3,d)$ has tame representation type. Moreover, 
one can verify that $S^{\B}_{Q,q}(3,d)$ has wild representation type in the various complementary cases. 

Finally let $n=4$. From Proposition~\ref{P:prodwildreptype}, $S^{\A}_{q}(2,l)\otimes S^{\A}_{q}(2,l)$ and 
$S^{\A}_{q}(2,l)\otimes S^{\A}_{q}(2,l+1)$ has wild representation type for $l\geq 3$. Therefore, 
$S^{\B}_{Q,q}(4,d)$ has wild representation type for $d\geq 2l$ and $l\geq 3$. For $l=2$, the same 
argument can be used to show that $S^{\B}_{Q,q}(4,d)$ has wild representation type for $d$-even and $d\geq 4$. 

This reduces us to analyzing $S^{\B}_{Q,q}(4,d)$ when $l=2$ and $d\geq 4$ is odd. By analyzing the 
components of $S^{\B}_{Q,q}(4,d)$ via the isomorphism in Theorem~\ref{thm:SBA}, one can show that for $d$ odd: (i) $S^{\B}_{Q,q}(4,d)$ 
has finite representation type (not semisimple) for $4\leq d\leq 2p-1$ and $p\geq 3$, (ii) $S^{\B}_{Q,q}(4,d)$ 
has finite representation type (not semisimple) for $d\geq 4$ and $p=0$, (iii) $S^{\B}_{Q,q}(4,d)$ 
has wild representation type for $d\geq 2p+1$ for $p\geq 5$, and (iv) $S^{\B}_{Q,q}(4,d)$ 
has wild representation type for $d\geq 2p+3$ for $p=3$. One has then show that $S^{\B}_{Q,q}(4,7)$ for $p=3$, $l=2$ has 
tame representation type since the component $S^{\A}_{q}(2,6)\otimes S^{\A}_{q}(2,1)$ has tame representation type and 
the remaining components have finite representation type.

\end{proof} 

Note that for the case $\bar{q}=1$ (i.e., $q^{2}=1$) one obtains the classical Schur algebra for type $A$, one can use the results in \cite{E93} \cite{DN98} \cite{DEMN99} to obtain 
classification results in this case for $S_{Q,q}^{B}(n,d)$.

\section{Quasi-hereditary covers}
\label{sec:qhcover}
In this section we first recall results on 1-faithful quasi-hereditary covers due to Rouquier \cite{Ro08}. Then we demonstrate that our Schur algebra is a 1-faithful quasi-hereditary cover of the type B Hecke algebra via Theorem~\ref{thm:SBA}. Hence, it module category identifies the category $\cO$ for the rational Cherednik algebra of type B, see Theorem~\ref{thm:O}. A comparison of our Schur algebra with Rouquier's Schur-type algebra is also provided.
\subsection{1-faithful covers}
Let $\mathcal{C}$ be a category equivalent to the module category of a finite dimensional projective $K$-algebra $A$, and let $\Delta = \{\Delta(\lambda)\}_{\lambda \in \Lambda}$ be a set of objects of $\mathcal{C}$ indexed by an interval-finite poset structure $\Lambda$.
Following \cite{Ro08}, we say that $\mathcal{C}$ (or $(\mathcal{C}, \Delta)$) is a {\em highest weight category} if the following conditions are satisfied:
\enu
\item[(H1)] $\End_{\mathcal{C}}(\Delta(\lambda)) = K$ for all $\lambda \in \Lambda$;
\item[(H2)] If $\Hom_{\mathcal{C}}(\Delta(\lambda) , \Delta(\mu)) \neq 0$ then $\lambda \leq \mu$;
\item[(H3)] If $\Hom_{\mathcal{C}}(\Delta(\lambda), M) = 0$ for all $\lambda \in \Lambda$ then $M = 0$;
\item[(H4)] For each $\Delta(\lambda) \in \Delta$ there is a projective module $P(\lambda) \in \mathcal{C}$ such that ker$(P(\lambda) \to \Delta(\lambda))$ has a $\Delta$-filtration, i.e., finite filtrations whose quotients are isomorphic to objects in $\Delta$.
\endenu
Let $A$-mod be the category of finitely generated $A$-modules.
The algebra $A$ is called a {\em quasi-hereditary cover} of $B$ if the conditions below hold:
\enu
\item[(C1)] $A$-mod admits a highest weight category structure $(A\textup{-mod}, \Delta)$;
\item[(C2)] $B = \End_A(P)$ for some projective $P \in A$-mod;
\item[(C3)] The restriction of $F=\Hom_A(P,-)$ to the category of finitely generated projective $A$-modules is fully faithful.
\endenu
Quasi-hereditary covers are sometimes called highest weight covers since the notion of highest weight category corresponds to that of split quasi-hereditary algebras \cite[Theorem~4.16]{Ro08}.
We also say that $(A,F)$ is a quasi-hereditary cover of $B$.
Moreover, a category $\mathcal{C}$ (or the pair $(\mathcal{C}, F)$) is said to be a quasi-hereditary cover of $B$ if $\mathcal{C} \simeq A\textup{-mod}$ for some quasi-hereditary cover $(A,F)$ of $B$.

Following \cite{Ro08}, 
a quasi-hereditary cover $A$ of $B$ is {\em $i$-faithful} if
\eq\label{eq:i-f}
\Ext_A^j(M,N) \simeq \Ext_B^j(FM,FN)
\quad
\textup{for}
\quad
j\leq i,
\endeq
and for all $M,N \in A$-mod admitting $\Delta$-filtrations.
Furthermore, a quasi-hereditary cover $(\mathcal{C}, F)$ of $B$ is said to be $i$-faithful if the diagram below commutes for some quasi-hereditary cover $(A,F')$ of $B$:
\[\begin{tikzcd}
\cC \arrow{rd}[swap]{F} \arrow{rr}{\simeq} &{}& A\textup{-mod} \arrow{ld}{F'} 
\\
{}&B\textup{-mod}
\end{tikzcd}
\]
Rouquier proved in \cite[Theorem~4.49]{Ro08} a uniqueness theorem for the 1-faithful quasi-hereditary covers which we paraphrase below:
\begin{prop}\label{prop:unicity}
Let $B$ be a finite projective $K$-algebra that is split semisimple,
and let $(\cC_i,F_i)$ for $i=1,2$ be $1$-faithful quasi-hereditary covers of
$B$ with respect to the partial order $\le_i$ on $\Irr(B)$.
If $\le_1$ is a refinement of $\le_2$ then there is an equivalence $\cC_1 \simeq \cC_2$ of quasi-hereditary covers of $B$ inducing the bijection $\Irr(\cC_1)\simeq\Irr(B)\simeq\Irr(\cC_2)$.
\end{prop}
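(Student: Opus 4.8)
The plan is to derive this from Rouquier's uniqueness theorem \cite[Theorem~4.49]{Ro08}; I would reproduce its argument in the generality we need, after a reduction to algebras. By the dictionary between highest weight categories and split quasi-hereditary algebras \cite[Theorem~4.16]{Ro08}, each $\cC_i$ is equivalent to $A_i\textup{-mod}$ for a split quasi-hereditary algebra $A_i$ carrying a projective object $P_i$ with $B\cong\End_{A_i}(P_i)$, under which $F_i$ becomes $\Hom_{A_i}(P_i,-)$ and the highest weight poset on $\Irr(B)$ is $\le_i$. So it suffices to build a Morita equivalence $A_1\textup{-mod}\simeq A_2\textup{-mod}$ compatible with the $F_i$ and to track its effect on simple objects, which then gives the bijection $\Irr(\cC_1)\simeq\Irr(B)\simeq\Irr(\cC_2)$.

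The core of the argument is a reconstruction of each $A_i$ from the pair $(B,\le_i)$. Writing $G_i$ for the right adjoint of $F_i$, the $1$-faithfulness of $(A_i,F_i)$ says precisely that $F_i$ induces isomorphisms on $\Hom$ and $\Ext^1$ between $\Delta$-filtered $A_i$-modules; one extracts from this that $G_i$ embeds the subcategory of $\Delta$-filtered objects of $\cC_i$ fully faithfully and that this image is controlled by $(B,\le_i)$ alone, the algebra $A_i$ being recovered from it by Ringel duality. Since $\le_1$ refines $\le_2$, each standard object $\Delta_2(\lambda)$ is filtered by the $\Delta_1(\mu)$, so the $\Delta_2$-filtered subcategory of $\cC_2$ maps into the $\Delta_1$-filtered subcategory of $\cC_1$; composing with the above embeddings yields a comparison functor $\cC_2\to\cC_1$ intertwining $F_2$ and $F_1$. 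Using the $1$-faithfulness of \emph{both} covers one checks this functor is exact and preserves projective and tilting objects, so that---by uniqueness of Ringel duals---it is in fact an equivalence of quasi-hereditary covers.

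The main obstacle is this last point: upgrading the comparison functor from a faithful embedding to an equivalence. This is where Rouquier's deformation technique is essential---one passes to a deformation of $B$ over a one-dimensional local ring whose generic fiber is split semisimple, notes that $1$-faithfulness is inherited by the deformed covers and that there the higher derived functors $R^{>0}G_i$ vanish on $\Delta$-filtered modules (so the relevant subcategories are genuinely equivalent), and then specializes back. Throughout, the hypothesis that $\le_1$ refines $\le_2$ is exactly what forces the comparison functor to respect the two highest weight structures, and hence to induce the claimed bijection on irreducibles once one specializes. I would carry out these verifications following \cite[\S4.2,\S4.4]{Ro08}.
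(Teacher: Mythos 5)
The paper offers no proof of this proposition: it is stated explicitly as a paraphrase of Rouquier's uniqueness theorem \cite[Theorem~4.49]{Ro08}, so your reduction to that result is exactly the paper's approach. Your further sketch of Rouquier's internal argument (reconstructing each cover from $(B,\le_i)$ via the adjoint pair $(F_i,G_i)$ on $\Delta$-filtered objects, then the deformation step) goes beyond what the paper attempts and, minor imprecisions aside (it is $F_i$, not $G_i$, that embeds the $\Delta$-filtered subcategory into $B$-mod; the recovery of $A_i$ is via a projective generator rather than Ringel duality), is consistent with the cited source.
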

\subsection{Rational Cherednik algebras}
Let $(W,S)$ be a finite Coxeter group, and let  $A_W$ be the corresponding rational Cherednik algebra over $\CC[h_u; u\in U]$ as in \cite{Ro08}, where 
$U = \bigsqcup_{s \in S} \{s\}\times \{1, \ldots, e_s\}$ and $e_s$ is the size of the pointwise stabilizer in $W$ of the hyperplane corresponding to $s$.
If $W = W^\B(d)$ and $S = \{s_0, s_1\}$ then $U = \{(s_i,j) ~|~ 0\leq i,j \leq 1\}$. In this case we assume that
\eq\label{def:h}
h_{(s_1,0)} = h,
\quad
h_{(s_1,1)} = 0,
\quad
h_{(s_0,i)} = h_i 
\quad
\textup{for}
\quad
i=0,1.
\endeq

\rmk
In \cite{EG02} the rational Cherednik algebra $\mathbf{H}_{t,c}$ is defined for a parameter $t\in \CC$, and a $W$-equivariant map $c:S \to\CC$. The two algebras, $A_{W}$ and $\mathbf{H}_{t,c}$, coincide if $t=1$, $h_{(s,0)} = 0$ and $h_{(s,1)} = c(s)$ for all $s\in S$.
\endrmk
Let $\cO_W$ be the category of finitely generated $A_W$-modules that are locally nilpotent for $S(V)$.
It is proved in \cite{GGOR03} that $(\cO_W, \Delta_W)$ is a highest weight category of $\cH(W)$-mod
\[
\Delta_W = \{\Delta(E) := A_W \otimes_{S(V) \rtimes W} E ~|~ E \in \Irr(W)\},
\]
See \cite[3.2.1--3]{Ro08} for the partial order $\leq$ on $\Irr(W)$.
Let $\Lambda_2^+(d)$ be the poset of all bipartitions of $d$ on which the dominance order $\unlhd$ is given by 
$\lambda\unlhd\mu$ if, for all $s\ge 0$,
\eq
\sum_{j=1}^s |\lambda_j^{(1)}|\le
\sum_{j=1}^s |\mu_j^{(1)}|
,
\quad
|\lone|+ \sum_{j=1}^s |\lambda_j^{(r)}|\le
|\mu^{(1)}| + \sum_{j=1}^s |\mu_j^{(r)}|.
\endeq
For $\lambda \in \Lambda_2^+(d)$, set
\eq
W^\B_\lambda(d) = C_2^d \rtimes (\Sigma_{\lone} \times \Sigma_{\ltwo}),
\endeq
Set
\eq
I_\lambda(1)=\{1, \ldots, |\lone|\},
\quad
I_\lambda(2)=\{|\lone|+1, \ldots, d\}.
\endeq
Following \cite[6.1.1]{Ro08}, there is a bijection 
\eq
\Lambda_2^+(d) \to \Irr(W^\B(d)),
\quad
\lambda = (\lone, \ltwo) \mapsto \chi_\lambda = \textup{Ind}^{W^\B(d)}_{W^\B_\lambda(d)} (\chi_{\lone} \otimes \phi^{(2)} \chi_{\ltwo}),
\endeq
where
$\chi_\lambda$ is the irreducible character of $W^\B(d)$ corresponding to $\lambda$, and $\phi^{(2)}$ is the 1-dimensional character of $C_2^{I_\lambda(2)} \rtimes \Sigma_{I_\lambda(2)}$ whose restriction to $C_2^{I_\lambda(2)}$ is $\det$ and the restriction to $\Sigma_{I_\lambda(2)}$ is trivial.

Rouquier showed that the order $\leq$ is a refinement of the dominance order $\unlhd$ under an assumption on the parameters $h, h_i$'s for the rational Cherednik algebra as follows:
\begin{lem}\cite[Proposition~6.4]{Ro08} \label{lem:refine}
Assume that $W = W^B(d)$, $h\le 0$ and $h_1-h_{0}\ge (1-d) h$ (see \eqref{def:h}).
Let $\lambda,\mu \in \Lambda_2^+(d)$. 
If $\lambda\unlhd\mu$, then $\chi_\lambda \le \chi_\mu$ on $\Irr(W)$.
\end{lem}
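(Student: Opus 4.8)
This is \cite[Proposition~6.4]{Ro08}; the plan is to reproduce Rouquier's argument, which reduces the assertion to an inequality for the $\mathbf c$-function of $A_{W^\B(d)}$ and then establishes that inequality by summation by parts against the partial sums that appear in the definition of $\unlhd$. First I would recall from \cite[3.2.1--3]{Ro08} that the order $\le$ on $\Irr(W^\B(d))$ used to put a highest weight structure on $\cO_{W^\B(d)}$ is governed by the function $\chi\mapsto c_\chi$, the scalar by which the Euler element of $A_{W^\B(d)}$ acts on the lowest weight space of $\Delta(\chi)$: one has $\chi\le\chi'$ as soon as $c_{\chi'}-c_\chi\in\ZZ_{>0}$ (or $\chi=\chi'$). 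So, since the case $\lambda=\mu$ is trivial, it suffices to show that $\lambda\unlhd\mu$ with $\lambda\neq\mu$ forces $c_{\chi_\mu}-c_{\chi_\lambda}$ to be a positive integer.

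The second ingredient is the explicit value of $c_{\chi_\lambda}$ for $W=W^\B(d)$ under the parametrization \eqref{def:h}. Using the decomposition $W^\B_\lambda(d)=C_2^d\rtimes(\Sigma_{\lone}\times\Sigma_{\ltwo})$, the induction formula $\chi_\lambda=\mathrm{Ind}^{W^\B(d)}_{W^\B_\lambda(d)}(\chi_{\lone}\otimes\phi^{(2)}\chi_{\ltwo})$, and the standard computation of the Euler scalar on an induced standard module, one obtains, up to an additive constant $\kappa(d)$ depending only on $d$,
\[
c_{\chi_\lambda}=\kappa(d)-h\Bigl(\sum_{x\in\lone}\mathrm{ct}(x)+\sum_{x\in\ltwo}\mathrm{ct}(x)\Bigr)+(h_1-h_0)\,|\ltwo|,
\]
where $\mathrm{ct}(x)$ denotes the content of the box $x$; the coefficient of $|\ltwo|$ records exactly the contribution of the $\det$-twist $\phi^{(2)}$ on the $C_2$-factors over the second component, and is the place where the two $s_0$-parameters of $A_{W^\B(d)}$ interact.

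With this formula, $c_{\chi_\mu}-c_{\chi_\lambda}$ is an explicit linear expression in the content-sum differences $\sum_{x\in\mu^{(k)}}\mathrm{ct}(x)-\sum_{x\in\lambda^{(k)}}\mathrm{ct}(x)$ ($k=1,2$) and in $|\mu^{(2)}|-|\lambda^{(2)}|$. The computational heart is to rewrite it, by iterated Abel summation, as a linear combination of the partial sums
\[
\sigma^{(1)}_s=\sum_{j=1}^{s}\bigl(|\mu^{(1)}_j|-|\lambda^{(1)}_j|\bigr),\qquad
\sigma^{(2)}_s=\bigl(|\mu^{(1)}|-|\lambda^{(1)}|\bigr)+\sum_{j=1}^{s}\bigl(|\mu^{(2)}_j|-|\lambda^{(2)}_j|\bigr),
\]
all of which are $\ge0$ by the definition of $\lambda\unlhd\mu$. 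Here one uses the identity $\sum_{x\in\nu}\mathrm{ct}(x)=\sum_i\binom{\nu_i}{2}-\sum_j\binom{\nu'_j}{2}$, so that each content-sum difference becomes a combination of the $\sigma^{(k)}_s$ with integer coefficients bounded (by roughly $d-1$); bundling in the $|\ltwo|$-term, the coefficient of each $\sigma^{(1)}_s$ comes out a nonnegative multiple of $-h\ge0$, while the coefficient of each $\sigma^{(2)}_s$ comes out of the shape $\pm\bigl[(h_1-h_0)+m_s h\bigr]$ with $0\le m_s\le d-1$, which is nonnegative precisely when $h_1-h_0\ge(1-d)h$ (recall $h\le0$). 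Summing, $c_{\chi_\mu}-c_{\chi_\lambda}\ge0$, with equality forcing all $\sigma^{(k)}_s=0$, i.e. $\lambda=\mu$; hence it is strictly positive when $\lambda\neq\mu$, and it lies in $\ZZ_{>0}$ since in the regime of \cite[\S6]{Ro08} the $c$-values lie in one coset of $\ZZ$. Therefore $\chi_\lambda\le\chi_\mu$.

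The main obstacle is not the inequality manipulation but getting the $\mathbf c$-function formula exactly right and keeping all signs consistent: one must track the normalization of the Euler element in \cite{Ro08}, the three-parameter convention \eqref{def:h} ($h$ on the $\Sigma$-node, $h_0,h_1$ on the $C_2$-node), and the precise effect of the $\det$-twist $\phi^{(2)}$ in the definition of $\chi_\lambda$, since together these fix the direction of the order, the sign of the $|\ltwo|$-coefficient, and the exact bound $h_1-h_0\ge(1-d)h$ (rather than some other linear inequality). Once the formula and the sign conventions are pinned down, matching the summation-by-parts output to the two families $\sigma^{(1)}_s,\sigma^{(2)}_s$ that define $\unlhd$ on bipartitions is routine, as is the standard monotonicity of content sums under dominance.
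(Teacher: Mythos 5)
First, a point of calibration: the paper does not prove this statement at all --- it is quoted, with attribution, as \cite[Proposition~6.4]{Ro08}, and the surrounding text only uses it (together with Proposition~\ref{prop:unicity}, Proposition~\ref{prop:OW} and Lemma~\ref{lem:1}) to deduce Theorem~\ref{thm:O}. So there is no in-paper argument to compare yours against; the relevant comparison is with Rouquier's own proof, and your outline does follow its actual structure: reduce the order $\le$ on $\Irr(W^\B(d))$ to the $\mathbf{c}$-function, write $c_{\chi_\lambda}$ as a constant plus $-h$ times the total content plus $(h_1-h_0)\,|\ltwo|$, and test positivity of $c$-differences against the partial sums that define $\unlhd$. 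That is the right strategy.

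As a proof, however, what you have written is a plan rather than an argument, and the two places where you defer are exactly where the content of the statement lives. First, the direction of the order: you posit $\chi\le\chi'$ when $c_{\chi'}-c_\chi\in\ZZ_{>0}$, but in the usual GGOR normalization the composition factors $L(\chi')$ of $\Delta(\chi)$ with $\chi'\neq\chi$ satisfy $c_{\chi'}-c_\chi\in\ZZ_{>0}$, so it is the \emph{smaller} elements of the highest weight order that carry the \emph{larger} $c$-value; with that convention your reduction ``show $c_{\chi_\mu}-c_{\chi_\lambda}>0$'' points the wrong way, and the sign is not cosmetic, since it is what turns $h\le0$ and $h_1-h_0\ge(1-d)h$ into the correct half-space of parameters rather than its complement. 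You flag this yourself as the main obstacle, but flagging it does not resolve it. Second, the Abel-summation step is asserted, not performed: the claims that each $\sigma^{(1)}_s$ appears with coefficient a nonnegative multiple of $-h$ and each $\sigma^{(2)}_s$ with coefficient of the form $\pm\bigl[(h_1-h_0)+m_s h\bigr]$ with $0\le m_s\le d-1$ constitute precisely the computation one must carry out, and until it is done you cannot confirm that the stated hypotheses are the correct ones (as opposed to, say, $h_1-h_0\ge(d-1)h$ or a bound involving $d$ rather than $d-1$). Finally, the assertion that all $c$-values lie in a single coset of $\ZZ$ --- needed to upgrade ``$\ge 0$, with equality iff $\lambda=\mu$'' to ``$\in\ZZ_{>0}$'' --- is stated without justification. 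In short: correct strategy, consistent with what Rouquier does, but the decisive sign bookkeeping and the coefficient computation are left unresolved, so the proposal does not yet establish the statement.
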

\rmk
The assumption in Lemma~\ref{lem:refine} on the parameters is equivalent to $c(s_0) = h_1 \ge 0$ using Etingof-Ginzburg's convention.
\endrmk
Let $KZ_W$ be the KZ functor $\cO_W \to \cH(W)$-mod. 
We paraphrase \cite[Theorem~5.3]{Ro08} in our setting as below:
\begin{prop}\label{prop:OW}
If $W = W^\B(d)$ and $\cH(W) = \cH^\B_{Q,q}(d)$, then
$(\cO_W, KZ_W)$ is a quasi-hereditary cover of $\cH(W)$-mod.
Moreover, the cover is 1-faithful if $(q^2+1)(Q^2+1) \neq 0$.
\end{prop}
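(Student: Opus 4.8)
The plan is to deduce both statements from Rouquier's general theory of rational Cherednik algebras, specialized to the complex reflection group $W = W^\B(d) = C_2 \wr \Sigma_d = G(2,1,d)$. The first step is to pin down the dictionary of parameters: on the standard module the generators $T_1,\dots,T_{d-1}$ act with eigenvalues $q^{-1}$ and $-q$, which fixes the Hecke parameter attached to the $\Sigma_d$-part of the Coxeter diagram, while $T_0$ acts with eigenvalues $Q^{-1}$ and $-Q$, fixing the two parameters attached to the order-two reflection; with the choice \eqref{def:h} of the Cherednik parameters $h,h_0,h_1$ for $A_W$, the cyclotomic Hecke algebra attached to $W$ is then exactly $\cH^\B_{Q,q}(d)$. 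This normalization bookkeeping --- reconciling the cyclotomic Hecke parameters, the $h$'s of \eqref{def:h}, and our $q,Q$ --- is routine but delicate, and it is the one place where sign and inversion errors are easy to make.

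Granting the dictionary, the first assertion --- that $(\cO_W, KZ_W)$ is a quasi-hereditary cover of $\cH(W)$-mod --- is an instance of the theorem of Ginzburg--Guay--Opdam--Rouquier \cite{GGOR03}, reorganized in \cite[\S5]{Ro08}: $\cO_W$ is equivalent to the module category of a finite-dimensional $K$-algebra and carries a highest weight structure with standard objects $\Delta_W$ satisfying (H1)--(H4); the functor $KZ_W$ is exact and is represented by a projective object $P_{KZ}$ with $\cH(W)\cong \End_{A_W}(P_{KZ})^{\mathrm{op}}$; and the restriction of $KZ_W$ to projectives is fully faithful. This is precisely (C1)--(C3), and no hypothesis on $(q,Q)$ is needed here.

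For the last clause I would invoke the faithfulness part of \cite[Theorem~5.3]{Ro08}, sharpened in the $G(2,1,d)$ case by the analysis of \cite[\S6]{Ro08}: the cover $\cO_W$ is $1$-faithful as soon as the parameters avoid an explicit finite union of ``bad'' hypersurfaces in parameter space. The remaining --- and really the only substantive --- task is to transport this locus through the dictionary above and verify that, for $W^\B(d)$, it collapses exactly to $(q^2+1)(Q^2+1)\neq 0$. One expects $q^2+1$ to be the obstruction in the symmetric-group direction: parabolic restriction along type-$\Sigma_m$ subgroups identifies the KZ cover of $\cH^\B_{Q,q}(d)$ with KZ covers of the algebras $\cH_q(\Sigma_m)$, which are $1$-faithful precisely when $q^2\neq -1$ (the root-of-unity part of the condition under which $S_q^\A$ is a $1$-faithful cover, cf. the footnote in the introduction). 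The factor $Q^2+1 = Q^2(Q^{-2}+1)$ should be the obstruction in the $C_2$-direction; it is exactly the $i=0$ factor of the polynomial $f^\B_d(Q,q)$ of Theorem~\ref{thm:SBA}, so that $Q^2=-1$ is the first place at which the type-B combinatorics degenerates. The main obstacle is thus not conceptual but the careful matching of Rouquier's bad locus with this single product --- confirming that no further hypersurfaces survive; once that is done, the proposition follows by citation.
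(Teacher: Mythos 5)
Your proposal matches the paper's treatment: the paper offers no proof at all, stating the proposition explicitly as a paraphrase of \cite[Theorem~5.3]{Ro08} (together with the GGOR highest weight structure on $\cO_W$), which is exactly the citation-based route you take. Your extra discussion of the parameter dictionary and of why the bad locus collapses to $(q^2+1)(Q^2+1)\neq 0$ (semisimplicity of the rank-one parabolic Hecke algebras attached to the two classes of reflection hyperplanes of $W^\B(d)$) is more than the paper supplies, and is consistent with Rouquier's hyperplane-local faithfulness criterion.
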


It is shown in \cite{Ro08} that under suitable assumptions, $\cO_{W^\B(d)}$ is equivalent to the module category of a Schur-type algebra $S^\R(d)$ which does not depend on $n$ using the uniqueness property Proposition~\ref{prop:unicity}. Below we give an interpretation in our setting. 

Let $\Lambda_2(d)$ be the set of all bicompositions of $d$. In \cite{DJM98b} a cyclotomic Schur algebra over $\QQ(q,Q,Q_1,Q_2)$ for each saturated subset $\Lambda \subset \Lambda_2(d)$, which specializes to cyclotomic Schur algebras $S_Q(\Lambda)$ over $K$ is defined (see Section~\ref{sec:CycS}). 
Moreover, in \cite{Ro08} an algebra $S_Q(\Lambda)$ is defined that is Morita equivalent to $S_Q(\Lambda)$ as given below:
\eq
S^\R(d)
:=
\End_{\cH_{Q,q}^\B(d)} (P_d),
\quad
P_d :=
\bigoplus_{\lambda\in \Lambda_2^+(d)} m_\lambda \cH_{Q,q}^\B(d).
\endeq
where $m_\lambda$ is defined in \eqref{def:mlambda}.
Note that $S^\R(d)$ does not depend on $n$. Set
\eq
F^\R_d = \Hom_{S^\R(d)}(  P_d , -): S^\R(d)\textup{-mod} \to \cH_{Q,q}^\B(d)\textup{-mod}.
\endeq
\begin{prop} \cite[Theorem~6.6]{Ro08} \label{prop:SchurR}
\enu[(a)]
\item
The category $\Mod(S^\R(d))$ is a highest weight category for the dominance order;
\item
$(S^\R(d), F^\R_d)$ is a quasi-hereditary cover of $\cH_{Q,q}^\B(d)$;
\item 
The cover $(S^\R(d), F^\R_d)$  is 1-faithful if
\eq\label{eq:condR}
(q^2+1)(Q^2+1)\neq 0,
\quad
\textup{and}
\quad
f^\B_{Q,q}(d) \cdot \prod_{i=1}^d (1+q^2+\cdots+q^{2(i-1)})
\neq 0.
\endeq
\endenu
\end{prop}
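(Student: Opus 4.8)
The plan is to deduce all three parts from Rouquier's \cite[Theorem~6.6]{Ro08}, the only work being to translate his statement into the two-parameter notation used here; there is no new mathematical content. First I would recall that, for the saturated set $\Lambda=\Lambda_2^+(d)$ of bipartitions, the Dipper--James--Mathas cyclotomic Schur algebra of \cite{DJM98b} is built on the Ariki--Koike algebra with cyclotomic parameters $(Q_1,Q_2)$, and that specializing $Q_1=Q^{-1}$, $Q_2=-Q$ turns the cyclotomic relation $(T_0-Q_1)(T_0-Q_2)=0$ into $T_0^2=(Q^{-1}-Q)T_0+1$; so the underlying algebra is $\cH^\B_{Q,q}(d)$ and the element $m_\lambda$ from \eqref{def:mlambda} is, up to a unit and a permutation, the Dipper--James--Mathas generator attached to $\lambda$. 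Part~(a) is then the theorem of \cite{DJM98b} that this cyclotomic Schur algebra, and hence its Morita-equivalent reduction $S^\R(d)=\End_{\cH^\B_{Q,q}(d)}(P_d)$, is cellular and quasi-hereditary with respect to the dominance order $\unlhd$ on $\Lambda_2^+(d)$, which is precisely the asserted highest weight structure on $\Mod(S^\R(d))$.

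For part~(b) I would verify conditions (C1)--(C3) in the definition of a quasi-hereditary cover. Condition (C1) is part~(a). For (C2), take $\lambda\in\Lambda_2^+(d)$ to be a ``column'' bipartition; then $m_\lambda\cH^\B_{Q,q}(d)$ is free of rank one, so $\cH^\B_{Q,q}(d)$ is a direct summand of $P_d$, an idempotent $e\in S^\R(d)$ cuts it out, $eS^\R(d)e\cong\cH^\B_{Q,q}(d)$, and $F^\R_d=\Hom_{S^\R(d)}(P_d,-)$ is identified with the Schur functor $e(-)$. Condition (C3), full faithfulness of $F^\R_d$ on projectives, then follows exactly as in type~A since $P_de$ generates $\cH^\B_{Q,q}(d)$ as a right module; this is how covers are produced in \cite[\S4.2, \S6]{Ro08}.

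Part~(c) is the delicate point, and it is where the hypothesis \eqref{eq:condR} enters. The strategy is to establish $1$-faithfulness of $(S^\R(d),F^\R_d)$ by showing that $\Ext^{\le 1}$ between $\Delta$-filtered $S^\R(d)$-modules is computed by $F^\R_d$; this is Rouquier's (equivalently Ariki's/Mathas's) \emph{separation} condition, rewritten in the $(Q,q)$-parameters. One checks that $f^\B_d(Q,q)=\prod_{i=1-d}^{d-1}(Q^{-2}+q^{2i})$ is the product governing when the two cyclotomic blocks separate under $Q_1=Q^{-1},Q_2=-Q$, while $\prod_{i=1}^d(1+q^2+\cdots+q^{2(i-1)})=\prod_{i=1}^d[i]_{q^2}$ records non-vanishing of the quantum factorials controlling semisimplicity of the symmetric-group parts; together they force the required $\Ext$-comparison. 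One may then also invoke Proposition~\ref{prop:unicity}, together with Proposition~\ref{prop:OW} and Lemma~\ref{lem:refine} (the first condition $(q^2+1)(Q^2+1)\neq 0$, implied by \eqref{eq:condR}, being exactly what makes $(\cO_{W^\B(d)},KZ_{W^\B(d)})$ a $1$-faithful cover whose highest weight order refines $\unlhd$), to identify $\Mod(S^\R(d))$ with $\cO_{W^\B(d)}$ as quasi-hereditary covers.

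The main obstacle I anticipate is purely bookkeeping: pinning down the dictionary between Rouquier's Cherednik parameters $(h,h_0,h_1)$, the cyclotomic parameters $(Q_1,Q_2)$ of the Ariki--Koike/Dipper--James--Mathas algebra, and the pair $(Q,q)$ used here, and confirming that under this dictionary the separation factor of \cite[Theorem~6.6]{Ro08} becomes exactly $f^\B_d(Q,q)\cdot\prod_{i=1}^d(1+q^2+\cdots+q^{2(i-1)})$, with the inequalities in Lemma~\ref{lem:refine} oriented correctly. Once that is fixed, (a)--(c) are faithful restatements of Rouquier's theorem.
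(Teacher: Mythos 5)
Your proposal is correct and matches the paper's approach: the paper offers no independent proof of this proposition, stating it as a direct paraphrase of \cite[Theorem~6.6]{Ro08} (together with the Dipper--James--Mathas results and the Morita equivalence between $S^\R(d)$ and the full cyclotomic Schur algebra), which is exactly the translation you carry out. Your specialization $Q_1=Q^{-1}$, $Q_2=-Q$ agrees with the paper's $Q_1=-q^{-1}Q$, $Q_2=q^{-1}Q^{-1}$ once one accounts for the rescaling $T_0^{\Delta}=q^{-1}T_0$, so the bookkeeping you flag as the main risk is in fact consistent.
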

The category $\cO$ for the type B rational Cherednik algebra together with its KZ functor can then be identified by
combining Propositions~\ref{prop:unicity}, \ref{prop:OW} and \ref{prop:SchurR}.
In other words, the following diagram commutes if \eqref{eq:condR} holds:
\[\begin{tikzcd}
\cO_{W^\B(d)} \arrow{rd}[swap]{KZ_{W^\B(d)}} \arrow{rr}{\simeq} &{}& S^\R(d)\textup{-mod} \arrow{ld}{F^\R_d} 
\\
{}&\cH_{Q,q}^\B(d)\textup{-mod}
\end{tikzcd}
\]
\subsection{1-faithfulness of $S^\B_{Q,q}(n,d)$-mod}
Let $\ell$ be the multiplicative order of $q^2$ in $K^\times$. 
In this section we use the following assumptions:
\eq\label{eq:condLNX}
f^\B_d(Q,q) = \prod_{i=1-d}^{d-1}(Q^{-2}+q^{2i}) \in K^\times,
\quad
r := \nhf \geq d,
\quad
\ell \geq 4.
\endeq
As a consequence, there exists a type B Schur functor by Proposition~\ref{prop:SFB}.
For type A, it is known in \cite{HN04} that the $q$-Schur algebra is a 1-faithful quasi-hereditary cover of the type A Hecke algebra if $\ell \geq 4$.
Moreover, Theorem~\ref{thm:SBA} applies and hence we will see shortly that $S^\B_{Q,q}(n,d)$ is a 1-faithful quasi-hereditary cover of $\cH^\B_{Q,q}(d)$.
Furthermore, Proposition~\ref{prop:unicity} implies that we have a concrete realization for the category $\cO$ for the type B rational Cherednik algebra together with its KZ functor using our Schur algebra.

\cor\label{cor:SBHWT}
If $f^\B_d\in K^\times$, then
$ S^\B_{Q,q}(n,d)$-mod is a highest weight category.
\endcor
\proof
It follows immediately from the isomorphism with the direct sum of type A $q$-Schur algebras that $ S^\B_{Q,q}(n,d)$-mod is a highest weight category.
\endproof
In below we characterize a partial order for highest weight category $S^\B_{Q,q}(n,d)$-mod  
obtained via Corollary~\ref{cor:SBHWT} and the dominance order for type A.
Denote the set of all $N$-step partitions of $D$ by $\Lambda^\A(N,D)$. Set
\eq
\Delta^\A_{N,D} = \{\Delta^\A(\lambda)~|~ \lambda \in \Lambda^\A(N,D)\}.
\endeq 
Now $\Delta^\A_{N,D}$ is a poset with respect to the dominance order $\unlhd$ on $\Lambda^\A(N,D)$. 
It is well known that for all non-negative integers $N$ and $D$, $(S^\A_q(N,D)\textup{-mod}, \Delta^\A_{N,D})$ is a highest weight category.

Recall $\cF_S$ from \eqref{def:FS} and $\Lambda^\B(n,d)$ from \eqref{def:LB}.
Set 
\eq\label{def:DB}
\Delta^\B_{n,d} = \{ 
\Delta^\B(\lambda) := \cF^{-1}(\Delta^\A(\lone) \otimes \Delta^\A(\ltwo) )
~|~
\lambda=(\lone, \ltwo)  \in \Lambda^B(n,d)
\}.
\endeq
Now $\Delta^\B_{n,d}$ is a poset with respect to the dominance order (also denoted by $\unlhd$) on $\Lambda^B(n,d) \subset \Lambda_2^+(d)$.
Hence,  $(S^\B_q(n,d)\textup{-mod}, \unlhd)$ is a highest weight category.

\lem\label{lem:1}
Assume that $S^\B_{Q,q}(n,d)$ is a quasi-hereditary cover of $\cH^\B_{Q,q}(d)$.
If \eqref{eq:condLNX} holds, then the cover is 1-faithful.
\endlem
\proof
Write $A=S^\B_{Q,q}(n,d), B = \cH^\B_{Q,q}(d), S' = S^\A_q(\nhc, i), S'' = S^\A_q(\nhf, d-i)$ for short. We need to show that,
for all $M, N$ admitting $\Delta^\B$-filtrations,
\[
\Ext^i_A(M, N) \simeq \Ext^i_{eAe}(F^\B_{n,d} M, F^\B_{n,d} N),
\quad
i \leq 1.
\]
Recall $\cF_S$ from \eqref{def:FS}.
Write $\cF_S M = \bigoplus_i M'_i \otimes M''_i$ and $\cF_S N = \bigoplus_i N'_i \otimes N''_i$ for some $M'_i, N'_i \in \textup{Mod}(S')$ and $M''_i, N''_i \in \textup{Mod}(S'')$.
From construction we see that all $M'_i, M''_i, N'_i, N''_i$ admit $\Delta^\A$-filtrations since  $M, N$ have $\Delta^B$-filtrations.

For $\nhf \geq d \geq i \geq 0$, we abbreviate the type A Schur functors  (see \eqref{def:SFA}) by $F' = F^\A_{\nhc,i}, F'' = F^\A_{\nhf, d-i}$.
Since the type A $q$-Schur algebras are 1-faithful provided $\ell \geq 4$, for $j\leq 1$ we have 
\eq
\begin{split}
\Ext^j_{S'}(M'_i, N'_i) &\simeq \Ext^j_{
\cH_{q}(\Sigma_{i+1})
}(F' M'_i, F' N'_i),
\\
\Ext^j_{S''}(M''_i, N''_i) &\simeq \Ext^j_{
\cH_{q}(\Sigma_{d-i+1})
}(F'' M''_i, F'' N''_i).
\end{split}
\endeq
We show first it is 0-faithful. We have
\eq
\begin{split}
\Hom_{A}(M,N)
&\simeq 
\Hom_{\bigoplus_{i=0}^d S' \otimes S''}
\Big(
\cF_S M, \cF_S N
\Big)
\\
&\simeq \bigoplus_{i=0}^d 
\Hom_{S'}(M'_i,N'_i)
\otimes
\Hom_{S''}(M''_i,N''_i)
\\
&\simeq  \bigoplus_{i=0}^d 
\Hom_{\cH_q(\Sigma_{i+1})}(F' M'_i, F' N'_i)
\otimes
\Hom_{\cH_q(\Sigma_{d-i+1})}(F''  M''_i, F'' N''_i)
\\
&\simeq \bigoplus_{i=0}^d
\Hom_{\cH_q(\Sigma_{i+1}) \otimes \cH_q(\Sigma_{d-i+1})}
\Big(
F' M'_i \otimes  F'  M''_i, F'' N'_i \otimes  F'' N''_i
\Big)
\\
&\simeq  \bigoplus_{i=0}^d
\Hom_{\cH_q(\Sigma_{i+1}) \otimes \cH_q(\Sigma_{d-i+1})}
(
\cF_H F^\flat_{n,d} M, \cF_H F^\flat_{n,d} N
)
\\
&\simeq \Hom_B (F^\flat_{n,d} M, F^\flat_{n,d} N).
\end{split}
\endeq
Note that the second last isomorphism follows from Proposition~\ref{prop:SF}.
For 1-faithfulness, we have
\eq
\begin{split}
\Ext^1_{A}(M,N)
&\simeq \bigoplus_{i=0}^d
\big(
(\Ext^1_{S'}(M'_i,N'_i)
\otimes
\Hom_{S''}(M''_i,N''_i))
\\
&\quad\qquad \oplus
(\Hom_{S'}(M'_i,N'_i)
\otimes
\Ext^1_{S''}(M''_i,N''_i))
\big)
\\
&\simeq \bigoplus_{i=0}^d
\big(
(\Ext^1_{\cH_q(\Sigma_{i+1})}(F' M'_i,F' N'_i)
\otimes
\Hom_{\cH_q(\Sigma_{d-i+1})}(F''  M''_i,F'' N''_i))
\\
&\quad\qquad \oplus
(\Hom_{\cH_q(\Sigma_{i+1})}(F' M'_i,F' N'_i)
\otimes
\Ext^1_{\cH_q(\Sigma_{d-i+1})}(F''  M''_i,F'' N''_i))
\\
&\simeq \bigoplus_{i=0}^d
\Ext^1_{\cH_q(\Sigma_{i+1}) \otimes \cH_q(\Sigma_{d-i+1})}
(
\cF_H F^\flat_{n,d} M, \cF_H F^\flat_{n,d} N
)
\\
&\simeq \Ext^1_B (F^\flat_{n,d} M, F^\flat_{n,d} N).
\end{split}
\endeq

%
%
%
\endproof
\thm\label{thm:O}
Assume that $W = W^B(d)$, $h\le 0$, $h_1-h_{0}\ge (1-d) h$ (see \eqref{def:h}) and $(q^2+1)(Q^2+1) \in K^\times$.
If \eqref{eq:condLNX} holds, 
then there is an equivalence $\cO_{W} \simeq S^\B_{Q,q}(n,d)$-mod of quasi-hereditary covers.
In other words, the following diagram commutes:
\[\begin{tikzcd}
\cO_{W} \arrow{rd}[swap]{KZ_{W}} \arrow{rr}{\simeq} &{}& S^\B_{Q,q}(n,d)\textup{-mod} \arrow{ld}{F^\flat_{n,d}} 
\\
{}&\cH_{Q,q}^\B(d)\textup{-mod}
\end{tikzcd}
\]
\endthm
\proof
The theorem follows by combining Propositions~\ref{prop:unicity}, \ref{prop:OW} and Lemmas~\ref{lem:refine}, ~\ref{lem:1}.
\endproof
\rmk\label{rmk:SR}
The uniqueness theorem for 1-faithful quasi-hereditary covers also applies on our Schur algberas and Rouquier's Schur-type algebras. That is,  the following diagram commutes provided \eqref{eq:condR} and \eqref{eq:condLNX} hold:
\[\begin{tikzcd}
S^\R(d)\textup{-mod} \arrow{rd}[swap]{F^\R_d} \arrow{rr}{\simeq} &{}& S^\B_{Q,q}(n,d)\textup{-mod} \arrow{ld}{F^\flat_{n,d}} 
\\
{}&\cH_{Q,q}^\B(d)\textup{-mod}
\end{tikzcd}
\]
\endrmk

\section{Variants of $q$-Schur algebras of type B/C}
\label{sec:SB}
It is interesting that the type A $q$-Schur algebra admits quite a few distinct generalizations  in type B/C in literature. 
This is due to that the type A $q$-Schur algebra can be realized differently due to the following realizations of the tensor space $(K^n)^{\otimes d}$:
(1) A combinatorial realization as a quantized permutation module (cf. \cite{DJ89});
(2) A geometric realization as the convolution algebra on $\GL_n$-invariant pairs consisting of a $n$-step partial flag and a complete flag over finite field (cf. \cite{BLM90}).

In the following sections we provide a list of $q$-Schur duality/algebras of type B/C in literature, paraphrased so that they are all over $K$, and with only one parameter $q$.
These algebras are all of the form $\End_{\cH^\B_q(d)}(V^{\otimes d})$ for some tensor space that may have a realization $V^{\otimes d} \simeq \bigoplus_{\lambda \in I} M^\lambda$ via induced modules. 
Considering the specialization at $q=1$, we have
\[
\left.M^\lambda\right|_{q=1} = \ind_{H_\lambda}^{W^\B(d)} U,
\quad
H_\lambda \leq W^\B(d) 
\textup{ is a subgroup},
\quad
U \textup{ is usually the trivial module}.
\]
We summarize the properties of the $q$-Schur algebras in the following table:
\[
\begin{tabular}{|c|c|c|c|c}
\hline &Coideal $q$-Schur Algebra &Cyclotomic Schur Algebra&Sakamoto-Shoji Algebra
\\
&$S_q^{\B}(n,d)$& $S_q(\Lambda)$& $S^{\B}_q(a,b,d)$
\\
\hline
Index set $I$&
compositions&bicompositions&unclear\\
& $\lambda = (\lambda_i)_{i\in I(n)}$
&
 $\lambda = (\lone, \ltwo)$
&

\\
&with constraints on $\lambda_i$&&
\\
Subgroup $H_\lambda$
&
$W^{\B}(\lambda_0)\times \Sigma_{(\lambda_1, \ldots \lambda_r)}$
& 
$(C_2^{|\lone|} \times C_2^{|\ltwo|})\rtimes \Sigma_{\lambda}$ 
& unknown
\\
Module $U$& trivial&nontrivial&
\\
Schur duality&$(U_q^\B(n), \cH^\B_q(d))$&partial&$(U_q(\fgl_a\times \fgl_b), \cH^\B_q(d))$
\\
Cellularity&new [LNX]&known \cite{DJM98b}&unknown
\\
Quasi-heredity&new [LNX]&known \cite{DJM98b}&unknown
\\
Schur functor&new [LNX]&known \cite{JM00} &unknown
\\
1-faithful cover &new [LNX]&unknown &unknown
\\
\hline
\end{tabular}
\]

For completeness a more involved $q$-Schur algebra (referred as the $q$-Schur$^2$ algebras) of type B is studied in \cite{DS00}.
We also distinguish the coideal $q$-Schur algebras from the slim cyclotomic Schur algebras constructed in \cite{DDY18}.

\subsection{The coideal Schur algebra $S^{\B}_q(n,d)$} 
This is the main object in this paper which we have been calling the $q$-Schur algebra of type B.
To distinguish it from the other variants we call them for now the coideal Schur algebras since they are homomorphic images of coideal subalgebras.

For the equal-parameter case, a geometric Schur duality is established between $\cH^\B_q(d)$ and the coideal subalgebra $U_q^{\B}(n)$ as below (cf. \cite{BKLW}):
\[
\Ba{{cclccccc}
U_q^\B(n)
\\
\downarrow
\\
S_q^{\B}(n,d) &\crr&  T_\geo^\B(n,d) \simeq (K^n)^{\otimes d}\simeq T_\alg^\B(n,d)& \crl& \cH^\B_q(d)
}
\]
Note that a construction using type C flags is also available, and it produces isomorphic Schur algebras and hence coideals.
A combinatorial realization $T_\alg^\B(n,d)$ as a quantized permutation module is also available along the line of Dipper-James. 

For the case with two parameters, the algebra $S^{\B}_{Q,q}(n,d)$, when $n$ is even, was first introduced by Green and it is called the hyperoctahedral $q$-Schur algebra \cite{Gr97}.
A two-parameter upgrade for the picture above is partially available -- 
a Schur duality is obtained in \cite{BWW18} between the two-parameter Hecke algebra $\cH^\B_{Q,q}(d)$ and the two-parameter coideal $\mathbb{U}^\B_n$ over the tensor space $\QQ(Q,q)$; a two-parameter upgrade for $T_\alg^\B(n,d)$ is studied in \cite{LL18} -- while a two-parameter upgrade for $T_\geo^\B(n,d)$ remains unknown since dimension counting over finite fields does not generalize to two parameters naively.

To our knowledge, this is the only $q$-Schur algebras for the Hecke algebras of type B that admit a coordinate algebra type construction and a notion of the Schur functors with the existence of appropriate idempotents.

\subsection{Cyclotomic Schur algebras}\label{sec:CycS}
The readers will be reminded shortly that the cyclotomic Hecke algebra $\bbH(r,1,d)$ of type $G(r,1,d)$ is isomorphic to $\cH^\B_q(d)$ at certain specialization when $r=2$.
For each saturated subset $\Lambda$ of the set of all bicompositions, Dipper-James-Mathas (cf. \cite{DJM98b}) define the cyclotomic Schur algebra $\bbS(\Lambda)$: 
\[
S_q(\Lambda) = \End_{\cH^\B_q(d)} T(\Lambda),
\]
where $T(\Lambda)$ is a quantized permutation module that has no known identification with a tensor space.
This generalizes the $(Q,q)$-Schur algebras introduced in \cite{DJM98a}, which is the special case when $\Lambda$ is the set of all bicompositions and $r=2$.

While a cellular structure (and hence a quasi-heredity) is obtained for $S_q(\Lambda)$, it is unclear if it has an analogue of full Schur duality. 

We also remark that there is no known identification of $T_\alg^\B(n,d)$ with a $T(\Lambda)$ for some $\Lambda$.

Let $R = \QQ(q,Q, Q_1, Q_2)$. The cyclotomic Hecke algebra (or Ariki-Koike algebra) $\bbH =\bbH(2,1,d)$ is the $R$-algebra generated by $T_0^{\Delta}, \ldots, T_{d-1}^{\Delta}$ subject to the relations below, for $1\leq i \leq d-1, 0 \leq j < k-1 \leq d-2$:
\begin{align}
(T_0^{\Delta} - Q_1)(T_0^\Delta - Q_2) = 0,
\quad
(T_i^{\Delta} +1)(T_0^{\Delta} - q_{\Delta}) = 0,
\\
(T_0^{\Delta}T_1^{\Delta})^2 = (T_1^{\Delta}T_0^{\Delta})^2,
\quad
T_i^{\Delta}T_{i+1}^{\Delta}T_i^{\Delta} = T_{i+1}^{\Delta}T_i^{\Delta}T_{i+1}^{\Delta},
\quad
T_k^{\Delta} T_j^{\Delta} = T_j^{\Delta} T_k^{\Delta}.
\end{align}
Next we rewrite the setup in {\it. loc. cit.} using the following identifications:
\eq
q_{\Delta}\leftrightarrow q^{-2},
\quad
T_i^{\Delta} \leftrightarrow q\inv T_i.
\endeq
Under the identification, the Jucy-Murphy elements are, for $m \geq 1$,
\eq
\begin{split}
L_m &= (q_{\Delta})^{1-m} T^{\Delta}_{m-1} \ldots T^{\Delta}_{0} \ldots T^{\Delta}_{m-1}
\\
&= (qT^{\Delta}_{m-1}) \ldots (qT^{\Delta}_{0}) \ldots (qT^{\Delta}_{m-1})
\\
&= T_{m-1} \ldots T_{0} \ldots T_{m-1}.
\end{split}
\endeq
Then the cyclotomic relation is 
\eq
(q\inv T_0 - Q_1)(q\inv T_0 - Q_2) = 0,
\quad
\textup{or}
\quad
(T_0 - qQ_1)(T_0 - qQ_2) = 0.
\endeq
This is equivalent to our Hecke relation at the specialization below:
\eq
Q_1 = -q\inv Q,
\quad
Q_2 = q\inv Q\inv.
\endeq
In summary we have the following isomorphism of $K$-algebras.
\begin{prop}
The type B Hecke algebra $\cH^\B_{Q,q}(d)$ is isomorphic to the cyclotomic Hecke algebra $\bbH(2,1,d)$ at the specialization 
$Q_1 = -q\inv Q, Q_2 = q\inv Q\inv$.
\end{prop}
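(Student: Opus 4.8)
The plan is to exhibit the isomorphism directly on generators, using the rescaling $T_i\leftrightarrow q\,T_i^\Delta$ and the parameter identification $q_\Delta=q^{-2}$ already set up above, and then specializing the cyclotomic parameters to $Q_1=-q\inv Q$, $Q_2=q\inv Q\inv$. Write $\bbH$ for the resulting specialization of $\bbH(2,1,d)$ over $K$. First I would define a $K$-algebra homomorphism $\varphi\colon\cH^\B_{Q,q}(d)\to\bbH$ by $\varphi(T_i)=q\,T_i^\Delta$ for $0\le i\le d-1$, and a map $\psi\colon\bbH\to\cH^\B_{Q,q}(d)$ by $\psi(T_i^\Delta)=q\inv T_i$, check that each respects the defining relations of its source, and observe that they are mutually inverse (which is clear on generators). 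Invertibility of $q$ in $K$, needed to define $\psi$, is implicit in the relations of $\cH^\B_{Q,q}(d)$.

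For $\varphi$, the relations of $\cH^\B_{Q,q}(d)$ homogeneous in the generators are immediate: under $\varphi$ the braid relation $T_iT_{i+1}T_i=T_{i+1}T_iT_{i+1}$ ($1\le i\le d-2$) becomes $q^3$ times the braid relation of $\bbH$, the relation $(T_0T_1)^2=(T_1T_0)^2$ becomes $q^4$ times $(T_0^\Delta T_1^\Delta)^2=(T_1^\Delta T_0^\Delta)^2$, and $T_iT_j=T_jT_i$ for $|i-j|>1$ becomes $q^2$ times the corresponding commutation, all of which hold in $\bbH$. The content is in the two quadratic relations. From $(T_i^\Delta+1)(T_i^\Delta-q_\Delta)=0$ with $q_\Delta=q^{-2}$ one gets $(T_i^\Delta)^2=(q^{-2}-1)T_i^\Delta+q^{-2}$, hence $\varphi(T_i^2)=q^2(T_i^\Delta)^2=(1-q^2)T_i^\Delta+1=(q\inv-q)\varphi(T_i)+1$ for $1\le i\le d-1$. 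From $(T_0^\Delta-Q_1)(T_0^\Delta-Q_2)=0$ one gets $(T_0^\Delta)^2=(Q_1+Q_2)T_0^\Delta-Q_1Q_2$; at the specialization $Q_1+Q_2=q\inv(Q\inv-Q)$ and $Q_1Q_2=-q^{-2}$, so $\varphi(T_0^2)=q^2(T_0^\Delta)^2=q(Q\inv-Q)T_0^\Delta+1=(Q\inv-Q)\varphi(T_0)+1$. The check for $\psi$ is symmetric: the homogeneous relations rescale the same way, the quadratic relation for $T_i^\Delta$ maps to $T_i^2=(q\inv-q)T_i+1$, and the cyclotomic relation for $T_0^\Delta$ maps, after clearing $q^{-2}$ and inserting the values of $Q_1,Q_2$, to $(T_0+Q)(T_0-Q\inv)=0$, which is precisely $T_0^2=(Q\inv-Q)T_0+1$.

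I do not expect a real obstacle: the argument is pure presentation-chasing, and the only delicate points are the scaling factor $q$ and the sign $Q_1Q_2=-q^{-2}$, both already visible in the Jucys--Murphy and cyclotomic-relation computations recorded above. If one prefers not to write out the inverse, it suffices to note that $\varphi$ is surjective, since $T_i^\Delta=q\inv\varphi(T_i)$ lies in its image, and that both $\cH^\B_{Q,q}(d)$ and $\bbH$ are free $K$-modules of the same rank $|W^\B(d)|=2^d\,d!$, whence $\varphi$ is an isomorphism.
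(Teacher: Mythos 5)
Your proposal is correct and follows essentially the same route as the paper, which establishes the result via the identifications $T_i^\Delta \leftrightarrow q\inv T_i$, $q_\Delta \leftrightarrow q^{-2}$ and the observation that the cyclotomic relation $(q\inv T_0 - Q_1)(q\inv T_0 - Q_2)=0$ becomes $(T_0+Q)(T_0-Q\inv)=0$ at the stated specialization; you have merely written out the relation-checking (and the rank argument $2^d\,d!$) more explicitly than the paper does.
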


For a composition $\lambda = (\lambda_1, \ldots, \lambda_\ell) \in \NN^\ell$ of $\ell$ parts write
\eq
|\lambda| = \lambda_1 + \ldots + \lambda_\ell,
\quad
\textup{and}
\quad
\ell(\lambda) = \ell.
\endeq
A bicomposition of $d$ is a pair $\lambda = (\lone, \ltwo)$ of compositions such that $|\lone| + |\ltwo| = d$.
We denote the set of bicompositions of $d$ by
$\Lambda_2=\Lambda_2(d)$.
A bicomposition $\lambda$ is a bipartition if $\lone, \ltwo$ are both partitions. The set of bipartitions of $d$ is denoted by $\Lambda^+_2=\Lambda^+_2(d)$.

Following \cite{DJM98b}, the cyclotomic Schur algebras can be defined for any saturated subset $\Lambda$ of the set $\Lambda_2(d)$ of all bicompositions of $d$. 
That is, any subset $\Lambda$ of $\Lambda_2$ satisfying the condition below:
\eq
\textup{If }\mu\in\Lambda, \nu\in\Lambda^+_2(d)
\textup{ and }
\nu \tr \mu,
\textup{ then } \nu \in \Lambda.
\endeq
For each $\Lambda$ we define a cyclotomic Schur algebra
$
\bbS(\Lambda) = \textup{End}_{\bbH}\left(
\bigoplus_{\lambda\in\Lambda} m_\lambda \bbH
\right),
$
where
\eq\label{eq:mld}
m_\lambda = u_{\ell(\lone)}^+ x_\lambda,
\quad
u_{\ell(\lone)}^+ = \prod_{m=1}^{\ell(\lone)} (L_m - Q_2),
\quad
x_\lambda = \sum_{w \in \Sigma_\lambda} T_w,
\endeq
and $\Sigma_\lambda = \Sigma_\lone \times \Sigma_\ltwo$ is the Young subgroup of $\Sigma_d$.
The specialization $S_Q(\Lambda)$ of $\bbS(\Lambda)$ at
$Q_1 = -q\inv Q, Q_2 = q\inv Q\inv$ is then given by
\eq
S_Q(\Lambda)
= \textup{End}_{\cH_{Q,q}^\B}\left(
\bigoplus_{\lambda\in\Lambda} m_\lambda \cH_{Q,q}^\B
\right),
\endeq
where 
\eq\label{def:mlambda}
m_\lambda = (L_1 - q\inv Q\inv)\ldots (L_{\ell(\lone)} - q\inv Q\inv) x_\lambda.
\endeq

Let $\cT_0(\lambda, \mu)$ be the set of semi-standard $\lambda$-tableaux of type $\mu$, that is, any $T = (T^{(1)}, T^{(2)}) \in \cT_0(\lambda, \mu)$ satisfies the conditions below:
\begin{enumerate}
\item[(S0)] $T$ is a  $\lambda$-tableau whose entries are ordered pairs $(i,k)$, and the number of $(i,j)$'s appearing is equal to $\mu_i^{(k)}$;
\item[(S1)] entries in each row of each component $T^{(k)}$ are non-decreasing;
\item[(S2)] entries in each column of each component $T^{(k)}$ are strictly increasing;
\item[(S3)] entries in $T^{(2)}$ must be of the form $(i,2)$.
\end{enumerate}
We note that the dimension of the cyclotomic Schur algebra $\Lambda$ is given by
\eq
\dim S_Q(\Lambda) = 
\sum_{\substack{\lambda \in \Lambda_2^+(d)\\ \mu, \nu \in \Lambda}} 
|\cT_0(\lambda, \mu)| \cdot |\cT_0(\lambda, \nu)|.
\endeq
It is then define a ``tensor space'' $T_Q(\Lambda) = \bigoplus_{\lambda\in\Lambda} m_\lambda \cH_{Q,q}^\B$ which has an obvious $S_q(\Lambda)$-$\cH_q^\B(d)$-bimodule structure.
\exa
Let
\eq
\Lambda_{a,b} = \Lambda_{a,b}(d) = \{\lambda = (\lambda^{(1)}, \lambda^{(2)}) \in  \Lambda_2(d)~|~ \ell(\lambda^{(1)}) \leq a, \ell(\lambda^{(2)}) \leq b\}.
\endeq
Recall that the dominance partial order in $\Lambda^+_2(1)$ is given by
$
\mu_2 = (\iyng{1}, \varnothing) \rhd  \mu_1 = (\varnothing, \iyng{1}),
$
and hence $\Lambda_{0,1}(1), \Lambda_{1,1}(1)$ are saturated, while $\Lambda_{1,0}(1)$ is not.
The cardinality of $|\cT_0(\mu_\bullet,\mu_\bullet)|$ is given as below:
\[
|\cT_0(\mu_1, \mu_1)| = 1 = |\cT_0(\mu_2, \mu_1)|  = |\cT_0(\mu_2,\mu_2)|,
\quad
|\cT_0(\mu_1, \mu_2)|  = 0.
\]
Note that $\cT_0(\mu_1, \mu_2)$ is empty since the only $\mu_2$-tableau of type $\mu_1$ is $(\varnothing, \iyoung{12})$, which violates (S3).
Hence, the dimensions of these cyclotomic Schur algebras are
\[
S_q(\Lambda_{0,1}(1)) =1,
\quad
S_q(\Lambda_{0,1}(1))=3.
\]
For $d=2$, the dominance order in $\Lambda^+_2(2)$ is given by
\[
\lambda_5 = (\iyng{2}, \varnothing) \rhd 
\lambda_4 = \left(\iyng{1,1}, \varnothing\right) \rhd 
\lambda_3 = (\iyng{1}, \iyng{1}) \rhd 
\lambda_2 =(\varnothing, \iyng{2}) \rhd 
\lambda_1 =\left( \varnothing, \iyng{1,1}\right).
\]
The sets $\Lambda_{0,2}(2), \Lambda_{1,2}(2)$,  and $\Lambda_{2,2}(2)$ are saturated.
The cardinality of $|\cT_0(\lambda_\bullet,\lambda_\bullet)|$ is given in the following table
\[
\begin{array}{c|ccccc}
\textup{type\textbackslash shape}&\lambda_5&\lambda_4&\lambda_3&\lambda_2&\lambda_1
\\
\hline
\lambda_5&1&0&0&0&0
\\
\lambda_4&1&1&0&0&0
\\
\lambda_3&1&1&1&0&0
\\
\lambda_2&1&0&1&1&0
\\
\lambda_1&1&1&2&1&1
\end{array}
\]
Hence, the dimensions are
\[
\dim S_q(\Lambda_{0,2}(2))=3,
\quad
\dim S_q(\Lambda_{1,2}(2))=7,
\quad
\dim S_q(\Lambda_{2,2}(2))=15.
\]
Recall that $\dim S_q^\B(2,d) = d+1$ for all $d$, hence the algebras $S_q^\B$ and $S_q(\Lambda)$ small ranks do not match in an obvious way.
\endexa
\subsection{Sakamoto-Shoji Algebras}\label{sec:SS}
The cyclotomic Hecke algebra $\bbH(r,1,d)$ does admit a Schur-type duality (cf. \cite{SS99}) with the algebra $U_q(\fgl_{n_1}\times\ldots\times\fgl_{n_r})$ where $n_1 + \ldots + n_r = n$. Hence, 
it specializes to the following double centralizer properties, for $a+b=n$: 
\[
\Ba{{cclccccc}
U_q(\fgl_a \times \fgl_b)
\\
\downarrow
\\
S_q^{\B}(a,b,d) &\crr& T(a,b,d) = (K^n)^{\otimes d}& \crl& \cH^\B_q(d)
}
\]

We will see in \eqref{eq:SST0} that $T_0$ acts as a scalar multiple on $T(a,b,d)$, which is different from our $T_0$-action \eqref{eq:vTi}.
Consquently, the duality is different from the geometric one. We could not locate an identification between $S_q^{\B}(a,b,d)$ and  $S_q(\Lambda)$ for some $\Lambda$ in 
the literature.

Now we set up the compatible version of the cyclotomic Schur duality introduced in \cite{SS99}. 
Let $R' = \QQ(Q, q', u_1, u_2)$,
and let $\bbH_{d,2}$ be the the $R'$-algebra generated by $a_1, \ldots, a_d$ subject to the relations below, for $2\leq i \leq d, 1 \leq j < k-1 \leq d-1$:
\begin{align}
(a_1 - u_1)(a_1 - u_2) = 0,
\quad
(a_i -q')(a_i + (q')\inv) = 0,
\\
(a_1a_2)^2 = (a_2a_1)^2,
\quad
a_i a_{i+1} a_i = a_{i+1} a_i a_{i+1},
\quad
a_k a_j = a_j a_k.
\end{align}
With the identifications below one has the following result. 
\eq
a_i \leftrightarrow T_{i-1},
\quad
q' \leftrightarrow q\inv
\endeq
\begin{prop}
The type B Hecke algebra $\cH^\B_{Q,q}(d)$ is isomorphic to the algebra $\bbH_{d,2}$ at the specialization 
$u_1 = -Q, u_2 =  Q\inv$.
\end{prop}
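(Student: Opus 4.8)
The plan is to exhibit mutually inverse $K$-algebra homomorphisms between $\cH^\B_{Q,q}(d)$ and the specialization of $\bbH_{d,2}$ at $u_1 = -Q$, $u_2 = Q\inv$ (recall also $q' \leftrightarrow q\inv$), determined on generators by the dictionary $a_i \leftrightarrow T_{i-1}$ for $1 \leq i \leq d$. Since both algebras are presented by generators and relations, it suffices to verify that this dictionary carries each defining relation of one algebra to a relation holding in the other; the only substantive point is the comparison of the quadratic relations after substituting the specialization values.

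First I would expand the quadratic relations. In $\bbH_{d,2}$ the relation $(a_1 - u_1)(a_1 - u_2) = 0$ is equivalent to $a_1^2 = (u_1+u_2)a_1 - u_1 u_2$; the specialization $u_1 = -Q$, $u_2 = Q\inv$ gives $u_1 + u_2 = Q\inv - Q$ and $u_1 u_2 = -1$, so the relation becomes $a_1^2 = (Q\inv - Q)a_1 + 1$, which matches $T_0^2 = (Q\inv - Q)T_0 + 1$ under $a_1 \leftrightarrow T_0$. Similarly $(a_i - q')(a_i + (q')\inv) = 0$ is equivalent to $a_i^2 = (q' - (q')\inv)a_i + 1$, and with $q' = q\inv$ this reads $a_i^2 = (q\inv - q)a_i + 1$, matching $T_{i-1}^2 = (q\inv - q)T_{i-1} + 1$ for $2 \leq i \leq d$.

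Next, the braid and commutation relations correspond verbatim under the index shift $a_i \leftrightarrow T_{i-1}$: the relation $(a_1 a_2)^2 = (a_2 a_1)^2$ becomes $(T_0 T_1)^2 = (T_1 T_0)^2$; each $a_i a_{i+1} a_i = a_{i+1} a_i a_{i+1}$ (for $2 \leq i \leq d-1$) becomes $T_{i-1} T_i T_{i-1} = T_i T_{i-1} T_i$ (for $1 \leq i-1 \leq d-2$); and $a_k a_j = a_j a_k$ for $|j-k| > 1$ becomes $T_{k-1} T_{j-1} = T_{j-1} T_{k-1}$ for $|(j-1)-(k-1)| > 1$. Hence $a_i \mapsto T_{i-1}$ extends to an algebra homomorphism $\phi$ from the specialized $\bbH_{d,2}$ to $\cH^\B_{Q,q}(d)$, and symmetrically $T_{i-1} \mapsto a_i$ extends to an algebra homomorphism $\psi$ in the opposite direction. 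Since $\phi$ and $\psi$ are mutually inverse on the generating sets, they are mutually inverse isomorphisms of $K$-algebras, proving the claim.

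I do not expect a genuine obstacle here: the entire content is the routine bookkeeping of matching the two quadratic relations under the specialization, the braid-type relations being identical up to the index shift. As an alternative to constructing $\psi$ explicitly, one may note that $\cH^\B_{Q,q}(d)$ and $\bbH_{d,2}$ are both free $K$-modules of rank $2^d d! = |W^\B(d)|$ (the latter because $\bbH_{d,2} \cong \bbH(2,1,d)$, an Ariki--Koike algebra of that rank), so the surjection $\phi$ is automatically an isomorphism.
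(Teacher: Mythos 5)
Your proposal is correct and follows exactly the route the paper intends: the paper simply records the dictionary $a_i \leftrightarrow T_{i-1}$, $q' \leftrightarrow q\inv$ and leaves the verification implicit, while you carry out the routine check that the quadratic relations match under $u_1 = -Q$, $u_2 = Q\inv$ (both sides becoming $X^2 = (Q\inv - Q)X + 1$, resp.\ $X^2 = (q\inv - q)X + 1$) and that the braid and commutation relations correspond verbatim under the index shift. Nothing further is needed.
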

Let $T_Q(a,b,d) = V_{a,b}^{\otimes d}$ where $V_{a,b}=K^a \oplus K^b$ is the natural representation of $U_q(\fgl_a \times \fgl_b)$ with bases $\{v^{(1)}_1, \ldots, v^{(1)}_a\}$ of $K^a$ and $\{v^{(2)}_1, \ldots, v^{(2)}_b\}$ of $K^b$.
The tensor space $T_Q(a,b,d)$ admits an obvious action of the type A Hecke algebra generated by $T_1, \ldots, T_{d-1}$. The $T_0$-action on $T(a,b,d) $ is more subtle as defined by 
\eq\label{eq:SST0}
T_0 = T_1\inv \circ \ldots \circ T_{d-1}\inv \circ S_{d-1} \circ \ldots \circ S_1 \circ \varpi \in \textup{End}(T(a,b,d)),
\endeq
where $\varpi$ is given by
\eq
\varpi(x_1 \otimes \ldots \otimes x_d) = \begin{cases}
-Q x_1 \otimes \ldots \otimes x_d
&\tif x_1 = v^{(1)}_i \textup{ for some }i;
\\
Q\inv x_1 \otimes \ldots \otimes x_d
&\tif x_1 = v^{(2)}_i \textup{ for some }i,
\end{cases}
\endeq
and that $S_i$ is given by
\eq
S_i(x_1 \otimes \ldots \otimes x_d) = \begin{cases}
T_i (x_1 \otimes \ldots \otimes x_d)
&\tif x_i, x_{i+1} \textup{ both lies in }K^a \textup{ or }K^b;
\\
   \ldots x_{i-1} \otimes x_{i+1} \otimes x_i\otimes x_{i+2} \otimes \ldots
& \textup{otherwise}.
\end{cases}
\endeq
Define 
\eq
S^{\B}_{Q,q}(a,b,d)= \textup{End}_{\cH_{Q,q}^\B(d)}\left(
T_Q(a,b,d)
\right).
\endeq
It is proved in \cite{SS99} that there is a Schur duality as below:
\[
\Ba{{cclccccc}
U_q(\fgl_a \times \fgl_b)
\\
\downarrow
\\
S^{\B}_{q}(a,b,d) &\crr& T(a,b,d)& \crl& \cH_{q}^\B(d)
}
\]
In \cite[Theorem~3.2]{A99} it is also proved an isomorphism theorem under a separation condition on $u_1, u_2$ and $q$. 
Note that the separation condition is equivalent to our invertibility condition at the specialization $u_1 = -Q, u_2 = Q^{-1}$.
\prop
If $f^\B_d(Q, q)$ is invertible in the field $K$, then we have an isomorphism of $K$-algebras:
\eq
S^{\B}_{Q,q}(\nhc, \nhf,d) 
 \to \bigoplus_{i=0}^d S_q^\A(\lceil n / 2 \rceil, i) \otimes S_q^\A(\lfloor n / 2 \rfloor, d - i).
\endeq
As a consequence, $S^{\B}_{Q,q}(\nhc, \nhf,d)$ is isomorphic to the coideal $q$-Schur algebra $S^\B_{Q,q}(n,d)$ under the invertibility condition.
\endprop
In the example below we show that the two algebras do not coincide when the invertibility condition fails.
\exa
Let $a = b = 1, d =2$. Then $T_Q(1,1,2)$ has a basis $\{ v := v^{(1)}_1, w:= v^{(2)}_1\}$. The $T_0$-action is given by
\begin{align*}
&(v\otimes v)T_0 = -Q v\otimes v,
\\
&(v\otimes w)T_0 = -Q v\otimes w,
\\
&(w\otimes v)T_0 = Q\inv (w\otimes v + (q\inv - q) v\otimes w),
\\
&(w\otimes w)T_0 = Q\inv w\otimes w.
\end{align*}
Note that this is essentially different from the $T_0$-action for the coideal Schur algebra given in \eqref{eq:vTi}.

Following \cite[\S2, Example]{A99}, the dimension of $S^\B_{Q,q}(1,1,2)$ is either 3,4 or 5. Note that 10 is excluded since at our specialization $u_1 = -Q, u_2 = Q^{-1}$ it is not possible that $u_1 = u_2 = 0$. 
In contrast, $S^\B_{Q,q}(2,d)$ is always of dimension 3.
\endexa
\subsection{Slim cyclotomic Schur Algebras}\label{sec:slim}
The slim cyclotomic Schur algebra $S_{(u_1,\ldots,u_r)}(n,d)$ introduced in \cite{DDY18} is another attempt to establish a Schur duality for the cyclotomic Hecke algebra $\bbH(r,1,d)$.
When $r=2$, the algebra $S_{(u_1,u_2)}(n,d)$ has the same dimension as the coideal $q$-Schur algebra $S^\B_{Q,q}(2n,d)$; while there is no counterparts for the algebra $S^\B_{Q,q}(2n+1,d)$.

It is conjectured in \cite{DDY18} that there is a weak Schur duality between the cyclotomic Hecke algebras and certain Hopf subalgebras $U_q(\widehat{\fsl}_n)^{(t)}$ of $U_q(\widehat{\fgl}_n)$ for an integer $t$ to be determined.
In our setting it can be phrased as follows:
\[
\Ba{{cclcccccccccc}
U_q(\widehat{\fgl}_n)&\supsetneq&U_q(\widehat{\fsl}_n)^{(t)}
\\
&&\downarrow
\\
&&S^{\widehat{\A}}_q(n,d) &\to& S_{(q,q)}(n,d) &\crr& \Omega^{\otimes d}& \crl& \cH^\B_q(d)
}
\]
Here $S_{(q,q)}(n,d) = \End_{\cH^\B_q(d)}(T_{(q,q)}(n,d))$ is the centralizer algebra of the $\cH^\B_q(d)$-action on a finite dimensional $q$-permutation module $T_{(q,q)}(n,d)$, while $\Omega$ is the (infinite-dimensional) natural representation of $U_q(\widehat{\fgl}_n)$.

We remark that it is called a weak duality in the sense that there are epimorphisms $U_q(\widehat{\fsl}_n)^{(t)} \twoheadrightarrow S_{(q,q)}(n,d)$ and $\cH^\B_q(d) \twoheadrightarrow \End_{S_{(q,q)}(n,d)}(\Omega^{\otimes d})$; while it is not a genuine double centralizer property.

\end{document}